\newtheorem{theo}{Theorem}
\newtheorem{lemm}[theo]{Lemma}
\newtheorem{rem}[theo]{Remark}
\numberwithin{equation}{section}
\numberwithin{theo}{section}
\newcommand{\R}{\mathbb R}
\def\supp{\operatorname{supp}}
\DeclareMathOperator{\Vol}{Vol}
\newcommand{\Div}{\mathrm{div}}
\newcommand{\dd} {\mathrm{d}}
\numberwithin{equation}{section}
\newcommand{\norm}[1]{\left\Vert#1\right\Vert}
\newcommand{\abs}[1]{\left\vert#1\right\vert}
\newcommand{\F}{\mathbb F}
\DeclareMathOperator{\dv}{div}
\DeclareMathOperator{\Def}{Def}
\DeclareMathOperator{\Ric}{Ric}
\def\H{\mathbb H^{2}(-a^{2})}
\def\be{\begin{equation}}
\def\ee{\end{equation}}
\newcommand{\tV}{\textbf V}
\begin{document}

\title{Antithesis of the Stokes paradox on the hyperbolic plane}

\author[Chan]{Chi Hin Chan}
\address{Department of Applied Mathematics, National Chiao Tung University,1001 Ta Hsueh Road, Hsinchu, Taiwan 30010, ROC}
\email{cchan@math.nctu.edu.tw}

\author[Czubak]{Magdalena Czubak}
\address{Department of Mathematics\\
University of Colorado Boulder\\ Campus Box 395, Boulder, CO, 80309, USA}
\email{czubak@math.colorado.edu}

\begin{abstract}
We show there exists a nontrivial $H^1_0$ solution to the steady Stokes equation on the 2D exterior domain in the hyperbolic plane.  Hence we show there is no Stokes paradox in the hyperbolic setting.  We also show the existence of a nontrivial solution to the steady Navier-Stokes equation in the same setting, whereas the analogous problem is open in the Euclidean case.
\end{abstract}

\subjclass[2010]{76D07, 76D05;}
\keywords{Navier-Stokes, Stokes paradox, exterior domain, obstacle, hyperbolic plane}
\maketitle


\section{Introduction}
The Stokes equation is the linear version of the Navier-Stokes.  George Gabriel Stokes proposed the equation as a model for the fluid flow in the low Reynolds regime. 
However, in a situation when we have a long cylinder moving slowly through an incompressible, viscous fluid, in the direction perpendicular to the axis of the cylinder, it follows from the work of Stokes \cite{Stokes1851} that there is no nontrivial solution.  This obviously contradicts what one physically expects. Oseen offered a correction to the Stokes' model, which is now known as the Oseen's system \cite{Oseen1910}.  This lack of the nontrivial solution is called the Stokes paradox. We give a precise description of the problem below.

The Stokes equation on $\R^n$ is
\be\label{StokesRn}
\begin{split}
-\mu\Delta v + \dd p&=0,\\
\dv v&=0,
\end{split}
\ee
where $v$ is the velocity of the fluid, $p$ is the pressure, and the fluid has viscosity $\mu>0$.  If we want to introduce a flow past an obstacle $K \subset R^n$ with an impermeable boundary, we can think of $K$ as a compact set, and then, we append a boundary condition for $v$ on $\partial K$.  Similarly, we can think of $K$ as a rigid body moving through a fluid with a nonzero viscosity $\mu$.  

The motion is assumed to be slow in the sense that the Reynolds number is small,
and motivates the disregard for the nonlinear interactions.  We now think of the motion of the liquid in a frame centered at $K$. 
Let $D_0(1)$ denote the closed unit disk in $\R^2$ and $S^1$ be its boundary.

 If $K$ is a very long cylinder and the motion is a translation that is perpendicular to the axis of the cylinder, then the motion is planar, and one can study
\be\label{StokesExt}
\begin{split}
-\mu\Delta v + \dd p&=0,\\
\dv v&=0,\\
v\big |_{S^1}&=v_0,
\end{split}
\ee
where $v: \R^2-D_0(1) \rightarrow \R^n$,  $p: \R^2-D_0(1) \rightarrow \R$, and $v_0$ is a constant vector in $\R^2$. We also include the condition at infinity to be $v\rightarrow 0$.  This description of the problem can be found in \cite[Chapter 5]{Galdi}.

We now move to the hyperbolic plane, and set up the problem in the analogous way except that instead of prescribing the boundary condition on the obstacle to be a possibly nonzero vector $v_0$, we set it to be zero, which perhaps is even more surprising that this and $v\rightarrow 0$, will lead to a nontrivial solution.  

Let $O$ be a chosen base point on the hyperbolic plane, and  let $\rho(O,x)$ denote the distance function from $O$ to $x$.  For  $R_0>0$, define
\[
\Omega (R_0) = \{ x \in \mathbb{H}^2(-a^2) : \rho (O, x ) > R_0  \} .
\]

We study the following steady Stokes equation on $\Omega (R_0)$, 
\begin{equation}\label{StatStokes}
\begin{split}
-2 \dv \Def v + \dd p & = 0 , \\
\dd^* v & = 0,\\
v\big|_{\partial B_{O}(R_0)}&=0,
\end{split}
\end{equation}
where we set the viscosity $\mu$ to $1$, $p$ is a smooth function on $\Omega (R_0)$, and $\Def$ is the deformation tensor, which can be written in coordinates as
\[
(\Def v)_{ij}=\frac 12(\nabla_i v_j+\nabla_j v_i).
\]
A computation using Ricci identity shows that on the hyperbolic plane the following holds for the divergence free vector fields $v$.
\be\label{viscop}
-2\dv\Def v=-\Delta v -2\Ric v=-\Delta v + 2a^2 v,
\ee
where $-\Delta$ is the Hodge Laplacian.  We use \eqref{viscop} as we believe this is the correct viscosity operator as pointed out in \cite{EbinMarsden}.  For more about the possible forms of the equations, see \cite{CCD16}.
\begin{rem}
We note that the equation $-\Delta v + v=\nabla p$ in the Euclidean setting, just like $-\Delta v=\nabla p$, leads to only trivial solutions \cite{Heywood}.
\end{rem}

In the second part of the paper we are interested in the steady Navier-Stokes equation  

\begin{equation}\label{StatNS}
\begin{split}
-2 \dv \Def v + \nabla_v v+\dd p & = 0 , \\
\dd^* v & = 0,\\
v\big|_{\partial B_{O}(R_0)}&=0.
\end{split}
\end{equation}

We now state our two main results
\begin{theo}\label{main}
There exist a nontrivial solution  $(u, p)$ on $\Omega (R_0)$ to
\begin{equation}\label{StokesEquationi}
\begin{split}
-\triangle u + 2a^2 u + \dd p & = 0 ,\\
\dd^* u & = 0,\\
u \big |_{\partial \Omega (R_0)} & = 0 ,
\end{split}
\end{equation}
where $u$ satisfies
\[
\int_{\Omega(R_0)}\abs{\nabla u}^2 <\infty,
\]
and $p\in L^2_{loc}(\Omega(R_0))$.
\end{theo}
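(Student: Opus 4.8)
The plan is to exploit the rotational symmetry of $\Omega(R_0)$ about $O$ to reduce \eqref{StokesEquationi} to an ODE and then write the solution down essentially explicitly. I work in geodesic polar coordinates $(\rho,\theta)$ centred at $O$, in which the metric of $\mathbb{H}^2(-a^2)$ is $\dd\rho^2+g(\rho)^2\dd\theta^2$ with $g(\rho)=\tfrac{\sinh(a\rho)}{a}$, so that $g'/g=a\coth(a\rho)$. Since $\dd^*u=0$ on a surface, I look for $u$ in stream form $u={*}\,\dd\psi$, which makes $\dd^*u=0$ automatic, and I take the first angular mode $\psi=f(\rho)\cos\theta$; this is the hyperbolic counterpart of the classical stream function for uniform flow past a translating obstacle. (One should first record that the purely radial mode $\psi=\psi(\rho)$ yields only $u\equiv 0$: there the single-valuedness of the pressure forces $\dd p\equiv 0$, and then $u|_{\partial\Omega(R_0)}=0$ together with $\langle(\dd\dd^*+\dd^*\dd)u,u\rangle\ge 0$ kills $u$; this is exactly why one must pass to the mode $\cos\theta$.) For this ansatz the scalar vorticity is $\omega=\operatorname{curl}u={*}\,\dd u=F(\rho)\cos\theta$, where $F=f''+a\coth(a\rho)f'-g^{-2}f$.

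The first step is to eliminate $p$ by taking the curl of the momentum equation. Since both $\dd$ and ${*}$ commute with the Hodge Laplacian, applying $\operatorname{curl}$ to \eqref{StokesEquationi} and using \eqref{viscop} gives $(\dd^*\dd+2a^2)\omega=0$, i.e.\ the radial equation
\[
F''+a\coth(a\rho)\,F'-\bigl(g^{-2}+2a^2\bigr)F=0 .
\]
Substituting $r=\cosh(a\rho)$ turns this into an associated-Legendre-type equation whose two solutions behave like $e^{a\rho}$ and $e^{-2a\rho}$ as $\rho\to\infty$; finiteness of $\int_{\Omega(R_0)}|\nabla u|^2$ forces $F=\beta\,F_{\mathrm{dec}}$, the (nonzero) decaying solution, with $\beta\neq 0$ a free scale. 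The second step is to recover $f$ from the Poisson-type relation $f''+a\coth(a\rho)f'-g^{-2}f=F$. Its homogeneous solutions are exactly the radial profiles of the two $\cos\theta$-harmonic functions on $\mathbb{H}^2(-a^2)$, namely $\tanh(a\rho/2)$ (the Euclidean coordinate in the disc model, harmonic since the Laplace equation is conformally invariant in dimension two) and $\coth(a\rho/2)$; a particular solution $f_p$ is bounded and decays like $e^{-2a\rho}$. Thus $f=\beta f_p+c_1\tanh(a\rho/2)+c_2\coth(a\rho/2)$.

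It remains to fit the boundary data and read off the pressure. From $|u|^2=(f')^2\cos^2\theta+(g^{-1}f)^2\sin^2\theta$ one sees that $u|_{\partial\Omega(R_0)}=0$ is equivalent to the two scalar conditions $f(R_0)=0$ and $f'(R_0)=0$; with $\beta$ fixed these form a linear $2\times 2$ system for $(c_1,c_2)$ whose determinant is a nonzero multiple of $\sinh(aR_0)^{-1}$, so $(c_1,c_2)$ is uniquely determined. For the pressure, the curl computation shows that $-\Delta u+2a^2u$ is a closed $1$-form; in coordinates it equals $A(\rho)\cos\theta\,\dd\theta+B(\rho)\sin\theta\,\dd\rho$ for suitable $A,B$, and its integral over each circle $\{\rho=\text{const}\}$ vanishes because $\int_0^{2\pi}\cos\theta\,\dd\theta=0$, so it is exact on the annular domain $\Omega(R_0)$; hence $p$ (of the form $-A(\rho)\sin\theta+\text{const}$) exists and is smooth, so $p\in L^2_{loc}(\Omega(R_0))$ (it is not globally $L^2$, which is why the statement asks only for $L^2_{loc}$). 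Finally, since $f$ is bounded with $f',g^{-1}f=O(e^{-a\rho})$, one gets $|u|,|\nabla u|=O(e^{-a\rho})$, which against the volume element $\asymp e^{a\rho}\,\dd\rho\,\dd\theta$ yields $\int_{\Omega(R_0)}|\nabla u|^2<\infty$ (indeed $u\in H^1_0(\Omega(R_0))$ by the spectral gap of $-\Delta$ on $\mathbb{H}^2$), and $u$ is nontrivial because its vorticity $\omega=\beta F_{\mathrm{dec}}\cos\theta$ is not identically zero for $\beta\neq 0$.

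I expect the technical heart to be the ODE analysis of the second step: solving the Legendre-type equation for $F$, identifying its two asymptotic regimes, and isolating exactly which combinations have finite Dirichlet energy, together with the preliminary verification that the radial mode is genuinely excluded so that one is forced into the $\cos\theta$ mode. The remaining ingredients --- the $2\times 2$ linear algebra for the boundary data, the exactness of the pressure $1$-form on the annulus, and the energy and nontriviality bookkeeping --- should be routine.
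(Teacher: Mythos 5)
Your proposal is correct in outline, but it takes a genuinely different route from the paper. The paper's proof is soft and variational: it starts from a nontrivial $L^2$ harmonic $1$-form $\dd F\in\F$ (Dodziuk), writes the prospective solution as $(\eta_{R_0}-1)\dd F+w+\widetilde w$, produces $w$ correcting the divergence via the Euclidean Bogovskii-type Theorem \ref{Classicalresult} transplanted through the Poincar\'e model (Lemma \ref{KeyLEMMA}), obtains $\widetilde w$ from the Riesz representation theorem in $\tV(\Omega(R_0))$ (Lemma \ref{TrivialLemma}), recovers $p$ from a de Rham-type lemma (Lemma \ref{RecoverPressure}), and proves nontriviality by a separate sign computation (Lemma \ref{judgement}). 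You instead exploit the rotational symmetry of the round obstacle: stream function $\psi=f(\rho)\cos\theta$, curl of \eqref{StokesEquationi} reducing to a Legendre-type ODE for the vorticity amplitude, selection of the decaying branch, variation of parameters against the two explicit harmonic radial profiles $\tanh(a\rho/2)$ and $\coth(a\rho/2)$, and a nondegenerate $2\times 2$ system for the boundary data. The key points check out: the Wronskian of $\tanh(a\rho/2)$ and $\coth(a\rho/2)$ is $-2a/\sinh(a\rho)\neq 0$; both homogeneous profiles tend to constants at infinity, and a bounded $f$ with $f',\,f/g=O(e^{-a\rho})$ does have finite Dirichlet energy against the volume element $\sim e^{a\rho}\,\dd\rho\,\dd\theta$; and the zero-period argument for the closed $1$-form $A(\rho)\cos\theta\,\dd\theta+B(\rho)\sin\theta\,\dd\rho$ on the annulus correctly yields a single-valued smooth pressure. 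What your approach buys is explicitness: concrete decay rates, and nontriviality --- indeed Theorem \ref{main3} --- essentially for free, since the vorticity $\beta F_{\mathrm{dec}}\cos\theta$ is visibly nonzero while any potential flow is irrotational. What it gives up is generality and reusability: it is tied to the circular obstacle, whereas the paper's cutoff-plus-Bogovskii framework is reused verbatim for the nonlinear problem in Section \ref{sectionTWO} (Theorem \ref{main2}) and would adapt to non-symmetric obstacles. Two minor remarks: the parenthetical exclusion of the radial mode is not needed (for an existence theorem you may simply choose the $\cos\theta$ mode), and your $c_1$-direction is secretly the paper's datum, since $\ast\,\dd\bigl(\tanh(a\rho/2)\cos\theta\bigr)$ is, up to sign, the differential of the conjugate harmonic function $\tanh(a\rho/2)\sin\theta$, i.e.\ an element of $\F$.
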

The theorem for the steady Navier-Stokes equation is analagous.
\begin{theo}\label{main2}
There exist a nontrivial solution  $(u, p)$ on $\Omega (R_0)$ to
\begin{equation}\label{SteadyNS}
\begin{split}
-\triangle u + 2a^2 u +\nabla_v v+ \dd p & = 0 ,\\
\dd^* u & = 0,\\
u \big |_{\partial \Omega (R_0)} & = 0 ,
\end{split}
\end{equation}
where $u$ satisfies
\[
\int_{\Omega(R_0)}\abs{\nabla u}^2 <\infty,
\]
and $p\in L^2_{loc}(\Omega(R_0))$.
\end{theo}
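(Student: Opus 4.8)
The plan is to build the Navier--Stokes solution as a small perturbation of the linear solution supplied by Theorem~\ref{main}. Write $\mathcal V$ for the closure, in the Dirichlet norm, of the smooth compactly supported divergence-free vector fields on $\Omega(R_0)$; let $(u_S,p_S)$ be the nontrivial solution of Theorem~\ref{main}, with $\int_{\Omega(R_0)}\abs{\nabla u_S}^2<\infty$ and $p_S\in L^2_{loc}$; let $\mathcal A(\cdot,\cdot)$ denote the symmetric bilinear form associated with the viscosity operator $-\Delta+2a^2$ (so that $u_S$, being a weak solution of \eqref{StokesEquationi}, satisfies $\mathcal A(u_S,\phi)=0$ for every $\phi\in\mathcal V$); and set $b(v,\phi,\psi):=\int_{\Omega(R_0)}\langle\nabla_v\phi,\psi\rangle$. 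The decisive structural fact is that the zeroth-order term $2a^2v$ makes $\mathcal A$ fully $H^1$-coercive on $\mathcal V$ (by the Bochner identity, $\mathcal A(v,v)$ is comparable to $\norm{v}_{H^1}^2$), so $-\Delta+2a^2$ is boundedly invertible there; combined with the Sobolev embedding on $\mathbb H^2(-a^2)$, which gives $\mathcal V\hookrightarrow L^q$ for every $q<\infty$, the convective term $\nabla_{u_S}u_S$ lies in $L^{4/3}_{loc}$ and defines a bounded functional on $\mathcal V$. I would look for the solution in the form $u=\epsilon u_S+w$, with $\epsilon>0$ small and $w\in\mathcal V$ unknown; this is legitimate because $\epsilon u_S$ is again a nontrivial Stokes solution for every $\epsilon\neq 0$, which is exactly what lets one dispense with any smallness hypothesis on the data.

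Substituting $u=\epsilon u_S+w$ and using $\mathcal A(\epsilon u_S,\phi)=0$ reduces the problem to finding $w\in\mathcal V$ with
\[
\mathcal A(w,\phi)+\epsilon\,b(u_S,w,\phi)+\epsilon\,b(w,u_S,\phi)+b(w,w,\phi)=-\epsilon^2\,b(u_S,u_S,\phi)
\]
for all $\phi\in\mathcal V$, the right-hand side being $O(\epsilon^2)$ in $\mathcal V^*$. I would solve this by the standard Galerkin procedure: choose finite-dimensional subspaces $V_m\uparrow\mathcal V$, solve the finite-dimensional nonlinear systems via Brouwer's theorem, and extract a weak limit. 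The a priori bound is where the smallness of $\epsilon$ is spent: testing with $\phi=w$ annihilates $b(u_S,w,w)$ and $b(w,w,w)$ via the identities $b(v,\psi,\psi)=0$ for divergence-free $v$ (no boundary term at $\partial\Omega(R_0)$ since $w\in H^1_0$, none at infinity since the fields lie in $L^2\cap L^4$), leaving $\mathcal A(w,w)+\epsilon\,b(w,u_S,w)=-\epsilon^2\,b(u_S,u_S,w)$; estimating $\abs{b(w,u_S,w)}\le C\norm{\nabla u_S}_{L^2}\norm{w}_{H^1}^2$ and fixing $\epsilon$ so small that $\epsilon C\norm{\nabla u_S}_{L^2}<\tfrac12 c$, where $c>0$ is the coercivity constant of $\mathcal A$, yields $\norm{w}_{H^1}\le C'\epsilon^2$ uniformly in $m$.

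The one genuinely delicate point is passing to the limit in the quadratic term on the noncompact domain $\Omega(R_0)$, where $\mathcal V\hookrightarrow L^4$ is not globally compact. I would localize: the uniform $H^1$-bound gives $w_m\rightharpoonup w$ in $\mathcal V$, and Rellich's theorem on each relatively compact subdomain gives $w_m\to w$ strongly in $L^4_{loc}$, which suffices to pass to the limit in $b(w_m,w_m,\phi)$, $b(w_m,u_S,\phi)$ and $b(u_S,w_m,\phi)$ when $\phi$ has compact support; density then gives the equation for all $\phi\in\mathcal V$. The pressure is recovered by a de Rham / Bogovskii-type argument from the fact that $\phi\mapsto-\mathcal A(u,\phi)-b(u,u,\phi)$ annihilates $\mathcal V$, producing $q\in L^2_{loc}$ with $p=q+\epsilon p_S\in L^2_{loc}(\Omega(R_0))$; interior elliptic regularity for the Stokes system (using that $\Omega(R_0)$ has bounded geometry and $\nabla_u u\in L^p_{loc}$ for all $p<\infty$) then bootstraps $(u,p)$ to smoothness. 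Nontriviality is free: $\norm{u}_{H^1}\ge\epsilon\norm{u_S}_{H^1}-\norm{w}_{H^1}\ge\epsilon\norm{u_S}_{H^1}-C'\epsilon^2>0$ for $\epsilon$ small, while $u_S\neq 0$ by Theorem~\ref{main}. I expect the main obstacle to be organizing this functional-analytic framework on the exterior hyperbolic domain so that the cancellation identities for $b$, the coercivity of $\mathcal A$, and the Sobolev and Rellich inputs are all rigorously in place; once they are, the argument is just the perturbative stationary Navier--Stokes scheme run on top of Theorem~\ref{main}.
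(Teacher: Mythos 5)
Your argument is correct, and it reaches Theorem \ref{main2} by a genuinely different organization than the paper's. The paper does not perturb off the full Stokes solution of Theorem \ref{main}: it fixes the explicit background $\Psi=(\eta_{R_0}-1)\,\dd F+w$ (cutoff times the harmonic form plus the divergence corrector of Lemma \ref{KeyLEMMA}), reduces to the perturbed system \eqref{TureNSSecond} for $\widetilde w\in\textbf{V}(\Omega(R_0))$ with forcing $\Phi=\triangle\Psi-2a^2\Psi-\nabla_\Psi\Psi$, and spends the smallness on $\|\dd F\|_{L^2}$ (condition \eqref{ConditionondFTWO}) rather than on your auxiliary parameter $\epsilon$; since $\mathbb{F}$ is a linear space these are the same normalization in disguise. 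For existence the paper solves on invading bounded domains $\Omega(R_0,R)$ by Leray--Schauder and passes to a weak limit, where you use Galerkin/Brouwer; both routes rest on identical structural inputs (coercivity of $\|\dd\cdot\|_{L^2}^2+2a^2\|\cdot\|_{L^2}^2$ on divergence-free fields, the cancellation $\int g(\nabla_v\psi,\psi)\Vol_{\mathbb{H}^2(-a^2)}=0$ from \cite[Lemma 4.2]{CC13}, the Ladyzhenskaya and Poincar\'e inequalities \eqref{Ladyzhenskaya} and \eqref{PoincareType}, local Rellich compactness for the quadratic term, and the de Rham argument of Lemma \ref{RecoverPressure} for the pressure), so this choice is a matter of taste. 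The most substantive difference is nontriviality: your quantitative bound $\|w\|_{H^1}\le C'\epsilon^2$, hence $\|u\|_{H^1}\ge\epsilon\|u_S\|_{H^1}-C'\epsilon^2>0$, is clean and self-contained, while the paper proves the stronger Lemma \ref{JudgementTWO} (the strict sign of $\int g(w,\dd F)$ forces $(\eta_{R_0}-1)\dd F+w+\widetilde w\ne 0$ for \emph{every} $\widetilde w\in\textbf{V}(\Omega(R_0))$), which decouples nontriviality from the smallness used for existence. Two small points to tighten when writing this up: in the Galerkin limit the test form should be fixed in some $V_{m_0}$ before sending $m\to\infty$ and only afterwards extended by density, using that $\varphi\mapsto b(u,u,\varphi)$ is bounded on $\mathcal{V}$ because $u\in L^4$ and $\nabla u\in L^2$; and the identity $\mathcal{A}(u_S,\varphi)=0$ should be stated first for $\varphi\in\Lambda_{c,\sigma}^1(\Omega(R_0))$ and extended to all of $\mathcal{V}$ by continuity of $\mathcal{A}$.
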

Moreover, not only the solution is nontrivial, it is not a potential flow.
\begin{theo}\label{main3}
Let $u$ be a solution obtained in Theorem \ref{main} or \ref{main2}, then
\[
u\neq \dd F,
\]
where $F$ is harmonic.
\end{theo}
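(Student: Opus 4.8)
The plan is to argue by contradiction and to isolate the underlying fact, valid irrespective of which equation $u$ solves: a nontrivial divergence‑free field on $\Omega(R_0)$ that is exact (a gradient) cannot vanish on $\partial B_O(R_0)$. So suppose $u=\dd F$ with $F$ harmonic on $\Omega(R_0)$. The harmonicity is in fact automatic, since for an exact field incompressibility $\dd^*u=0$ is precisely $\Delta_g F=0$; thus the only hypothesis really used is that $u$ is a gradient. Note also that this is the \emph{expected} obstruction: a potential flow does solve the interior equations, with $p=-2a^2F$ for \eqref{StokesEquationi} (using \eqref{viscop} and $\Delta_g F=0$, which make the Hodge Laplacian of $\dd F$ vanish), and with $p=-2a^2F-\tfrac12|\dd F|^2$ for \eqref{SteadyNS} since $\nabla_u u=\tfrac12\,\dd|\dd F|^2$ for a gradient field; hence the contradiction must come from the boundary condition. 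By elliptic regularity up to the boundary for the Stokes system \eqref{StokesEquationi} (resp.\ the Navier--Stokes system \eqref{SteadyNS}) on the smooth domain $\Omega(R_0)$, the solution $u$ of Theorem~\ref{main} (resp.\ Theorem~\ref{main2}) is smooth on $\overline{\Omega(R_0)}$, so the no‑slip condition holds pointwise and $F\in C^\infty(\overline{\Omega(R_0)})$ with $\dd F|_{\partial B_O(R_0)}=0$.

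I would then read this as vanishing Cauchy data. The tangential part of $\dd F$ along the circle $\partial B_O(R_0)$ being zero says $F$ is constant there, so after subtracting the constant $F=0$ on $\partial B_O(R_0)$; the normal part being zero says $\partial_\nu F=0$ there. Thus $F$ is harmonic on $\Omega(R_0)$ with zero Cauchy data on $\partial B_O(R_0)$, and the goal is to conclude $F\equiv 0$ by unique continuation. In two dimensions this is especially clean: harmonicity is conformally invariant, so in a conformal chart $F$ is a classical harmonic function; on a simply connected neighborhood of an arc $\gamma\subset\partial B_O(R_0)$ write $F=\operatorname{Re}\Phi$ with $\Phi$ holomorphic, so that $\Phi'=\partial_x F-i\,\partial_y F$ vanishes identically on $\gamma$ (because $\dd F=0$ on $\gamma$). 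Since $\gamma$ is a nondegenerate arc it has accumulation points, hence $\Phi'\equiv 0$ and $\nabla F\equiv 0$ on a nonempty open set. As $F$ is real‑analytic on the connected domain $\Omega(R_0)$, we get $\nabla F\equiv 0$ everywhere, i.e.\ $u=\dd F\equiv 0$, contradicting the nontriviality from Theorem~\ref{main} (resp.\ \ref{main2}). Alternatively, Holmgren's uniqueness theorem applies verbatim to $\Delta_g$, whose coefficients are real‑analytic and for which every hypersurface is non‑characteristic, giving $F\equiv 0$ near $\partial B_O(R_0)$ and then everywhere by analyticity; and, applied to local potentials, the same argument shows more strongly that $u$ cannot even be closed, i.e.\ the flow is not irrotational.

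The step I expect to carry the real weight is the first reduction --- upgrading the $H^1_0$ boundary condition to \emph{pointwise} vanishing of $\dd F$ on $\partial B_O(R_0)$, which is where regularity of the Stokes/Navier--Stokes solution up to the boundary is needed; after that the argument is soft and purely local at the inner boundary. If one wished to avoid unique continuation entirely, a Bochner/Rellich route is available but more delicate: from the Weitzenb\"ock identity $\tfrac12\Delta_g|\dd F|^2=|\nabla\dd F|^2-a^2|\dd F|^2$ (using $\Ric=-a^2g$ and $\Delta_g F=0$), integrate over $\Omega(R_0)\cap B_O(R)$; the inner boundary term drops because $\dd F=0$ there, which also forces $\partial_\nu|\dd F|^2=0$ there, and then $\int_{\Omega(R_0)}|\nabla u|^2<\infty$ together with a Poincar\'e‑type inequality on the exterior hyperbolic domain should again force $\dd F\equiv 0$. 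The cost of this variant is controlling the contribution of $\partial B_O(R)$ as $R\to\infty$, which the local unique‑continuation argument avoids; for that reason I would present the conformal/Holmgren argument as the main proof.
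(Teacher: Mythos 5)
Your overall strategy is the same as the paper's: assume $u=\dd F$, note that $\dd^*u=0$ forces $F$ to be harmonic, pass to the conformal (Poincar\'e) chart where the gradient of a harmonic function packages into a holomorphic function, use the no-slip condition to make that holomorphic function vanish on the inner circle, and conclude that it vanishes identically. The paper carries this out globally on the annulus $A=\{\tanh(\frac a2R_0)<|y|<1\}$: it sets $w_1=\partial_1 f^\sharp$, $w_2=-\partial_2 f^\sharp$, checks that $w_1+iw_2$ is analytic on $A$ and that each $w_j$ is a harmonic $H^1$ function with zero trace on the inner circle, hence (by \cite[Theorem 8.30]{GilbargTrudinger}) continuous up to and vanishing on that circle, and then kills every Laurent coefficient by letting the contour radius $\delta$ decrease to $\tanh(\frac a2R_0)$. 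Two points in your version need attention.

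First, the step ``$\Phi'$ vanishes on $\gamma$, $\gamma$ has accumulation points, hence $\Phi'\equiv 0$'' is not an application of the identity theorem: $F$ is defined only on $\Omega(R_0)$, so $\Phi'$ is holomorphic only on a one-sided neighborhood of $\gamma$, and its zeros accumulate on the \emph{boundary} of the domain of holomorphy, where the interior identity theorem does not apply. The conclusion is true, but it needs a boundary uniqueness argument --- for instance Schwarz reflection across $\gamma$ after mapping the arc to a real segment (using that $\Phi'$ extends continuously to $\gamma$ and vanishes there), or precisely the paper's device of evaluating the Laurent coefficients on circles $|z|=\delta$ and passing to the limit $\delta\downarrow\tanh(\frac a2R_0)$. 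Second, your appeal to elliptic regularity up to the boundary for the Stokes, respectively Navier--Stokes, system is both unestablished in this hyperbolic setting and unnecessary: under the contradiction hypothesis the components of $\dd F^\sharp$ are themselves harmonic functions lying in $H^1$ with zero trace on the inner circle, so scalar elliptic theory alone already yields continuity up to, and pointwise vanishing on, $\partial B_O(R_0)$; this is exactly how the paper upgrades the $H^1_0$ condition to pointwise Cauchy data without invoking any regularity theory for the fluid system. With these two repairs your argument is correct and, like the paper's, in fact proves the stronger statement that no nontrivial divergence-free element of $H^1_0(\Omega(R_0))$ can be an exact form.
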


 \subsection{Outline of the proof}\label{outline}
 
 Some of the main tools that we use are based on the Euclidean theory, and can be found for example in \cite{Seregin} or \cite{Galdi}.  We give references to these sources when we use them.  Poincar\'e model and the conformal equivalence is what allows us to benefit from these results.  The Poincar\'e model is set up in Section \ref{notation}.  Besides the Euclidean theory, we also heavily rely on our work in \cite{CC13, CC15}.  For example, we use that $L^2$ harmonic 1-forms are actually in $H^1$, as well as the Ladyzhenskaya's inequality in the hyperbolic setting \cite{CC13}, and a Poincar\'e type estimate \cite{CC15}.  These will be referenced or stated when they are needed.
 
The form of the solution for the steady Navier-Stokes equation is inspired by the solution obtained for the steady Stokes equation \eqref{StokesEquationi}.  We now explain the idea behind obtaining a nontrivial solution to \eqref{StokesEquationi}.

The solution $u$ is a sum of three terms
\be\label{defnu}
u = \big (  \eta_{R_0} - 1  \big ) \dd F + w + \widetilde{w} .
\ee
It will be obtained in four steps in Section \ref{sectionONE}.  We will start with the equation that is like \eqref{StokesEquationi} except for the boundary condition on the obstacle.  More precisely, 

\begin{equation}\label{step1}
\begin{split}
-\triangle  v + 2a^2 v + \dd P & = 0 ,\\
\dd^* v & = 0 , \\
v \big |_{\partial \Omega (R_0)} & = \dd F |_{\partial \Omega (R_0)},
\end{split}
\end{equation}
where $\dd F$ is a harmonic $L^2$ form on $\H$, whose existence is guaranteed for example by \cite{Dodziuk}.

One can then see $(v, p)=(\dd F, -2a^2F)$ is a solution to \eqref{step1}.  However, we want to find a solution that is different from $\dd F$, so we let
\[
v=\eta_{R_0}dF+\tilde v, \]
where $\eta_{R_0}$ is a cut-off function, whose properties will be given in Section  \ref{sectionONE}, but in particular $\eta_{R_0}\equiv 1$ on a ball of radius $2R_0$.  Then in order for $v$ to solve \eqref{step1}, one can see $\tilde v$ must solve 
\begin{equation}\label{step2}
\begin{split}
-\triangle \widetilde{v} + 2a^2 \widetilde{v} + \dd P & = \triangle \big ( \eta_{R_0} \dd F  \big )
-2a^2 \eta_{R_0} \dd F ,\\
\dd^* \widetilde{v} & = g ( \dd \eta_{R_0} , \dd F ),\\
\widetilde{v} \big |_{\partial \Omega (R_0)} & = 0.
\end{split}
\end{equation}

Then, in Lemma \ref{KeyLEMMA}, we find $ w$ such that $ w$ satisfies the divergence condition in \eqref{step2}, so
\[
\dd^\ast  w= g ( \dd \eta_{R_0} , \dd F ).
\]

Hence to find $\tilde v$ we set 
\be\label{tildevdef}
\tilde v=\tilde w + w,
\ee
which means $\tilde w$ needs to solve
\begin{equation}\label{step3}
\begin{split}
-\triangle  \widetilde{w} + 2a^2 \widetilde{w} + \dd P & = \triangle\big ( \eta_{R_0} \dd F + w  \big ) -2a^2 \big ( \eta_{R_0} \dd F + w \big )  ,\\
\dd^* \widetilde{w} & = 0 ,\\
\widetilde{w} \big |_{\partial \Omega (R_0)} & = 0 .
\end{split}
\end{equation}
The system \eqref{step3} is then solved by the Riesz Representation Theorem,  in Lemma \ref{TrivialLemma}.  Since we already had $w$, from \eqref{tildevdef} we now have $\tilde v$, so we then backtrack to get $v$, and then finally, because we want a solution that has zero boundary data, we define $u$ by \eqref{defnu}.

To finish we will need to recover the pressure, and show $u$ is nontrivial (Steps 4 and 5 in Section \ref{sstep4} and Section \ref{sstep5} respectively).  In Section \ref{step6}, Step 6, we show $u$ is not a potential flow.
Section \ref{sectionTWO} is devoted to constructing a solution to the steady Navier-Stokes equation.  Here we use Leray's original approach of constructing the solution in the exterior domain as described in \cite{Seregin}.

\subsection{Acknowledgments}
The authors would like to thank
Vladim{\'i}r  \v{S}ver{\'a}k and Hideo Kozono for suggesting to consider the Stokes paradox in the hyperbolic setting.  

C. H. Chan is partially supported by a
grant from the National Science Council of Taiwan (NSC 105-2918-I-009 -008).
M. Czubak is partially supported by a grant from the Simons Foundation \# 246255.

\section{Preliminaries}\label{notation}

The geodesic ball at $x$ with radius $R$ in $\Bbb H^{2}(-a^{2})$ will be denoted by $$B_{x}(R)=\{y\in \mathbb H^{2}(-a^{2}):\rho(x,y)< R\},$$
where $\rho(x,y)$ is the geodesic distance between $x$ and $y$ in $\Bbb{H}^{2}(-a^{2}).$ For any $x\in \Bbb R^{2}$ and $R>0$, the Euclidean open ball centered at $x$ with radius $R$ will be denoted by $$D_{x}(R)=\{y\in \Bbb R^{2}:|x-y|<R\}.$$

\subsection{Poincar\'e disk model}
We recall the standard Poincar\'e disk model $Y: \mathbb{H}^2(-a^2) \rightarrow D_0(1)$, under which the hyperbolic metric on $\mathbb{H}^2(-a^2)$ can be represented by
\begin{equation*}
g(\cdot , \cdot ) = \frac{4}{a^2 \big ( 1-|Y|^2\big )^2} \Big \{ \dd Y^1 \otimes \dd Y^1 + \dd Y^2 \otimes \dd Y^2 \Big \}.
\end{equation*}

If we let $\tilde y \in D_0(1)$ with $\abs{\tilde y}=r$, then by parametrizing the straight line connecting $0$ and $\tilde y$, the geodesic distance between $0$ and $\tilde y$ is
 \be\label{gdist}
\rho(0,\tilde y)=\frac 1a\int^r_0 \frac{2}{1-t^2} d t =\frac 1a \log(\frac{1+r}{1-r}).
 \ee
A computation shows that if
\[
r=\tanh(\frac{aR}{2}),
\]
then
\[
\frac 1a \log(\frac{1+r}{1-r})=R.
\]
This means that $Y$ maps a geodesic ball of radius $R$ onto the Euclidean ball of radius $\tanh(\frac{aR}{2})$.

Next, for some fixed $R_0 > 0$,  let $\Omega (R_0) = \{ x \in \mathbb{H}^2(-a^2) : \rho (O, x ) > R_0  \} $, so under $Y$, this corresponds to
\be\label{eucl_a}
\{ y \in \mathbb{R}^2 : \tanh (\frac{aR_0}{2}) < |y| < 1  \}.
\ee

\subsection{Relating $H^1_0$ norms}
We now discuss the relationship of the $H^1_0$ norm, on the set $U\subset \H$, to the Euclidean $H^1_0$ norm, on the set $Y(U)\subset D_0(1)$.  More precisely, if $u$ is a $1-$form on $U\subset \H$ written in coordinates as
\[
u = u_1 \dd Y^1 + u_2 \dd Y^2 ,
\]
then we can define $u^\sharp$ on $Y(U)$ by
\begin{equation*}
u^\sharp = {u}_1^\sharp \dd y^1 +{u}_2^\sharp \dd y^2 ,
\end{equation*}
where
\begin{equation*}
\begin{split}
{u}_1^\sharp = {u}_{1}\circ Y^{-1} , \quad
{u}_2^\sharp  = {u}_{2}\circ Y^{-1} .
\end{split}
\end{equation*}
In other words, $u^\sharp$ is the pull-back of $u$ by $Y^{-1}$
\[
u^\sharp=\big ( Y^{-1}\big )^* u.
\]

We show
\be\label{easy_inclusion}
\norm{u}_{H^1_0(U)}\lesssim \norm{u^\sharp} _{H^1_0(Y(U))}.
\ee
First
\begin{align*}
\norm{u}^2_{L^2(U)}&=\int_U g(u,u) \Vol_{\H}\\
&=\int_{U}\frac{a^2(1-\abs{Y}^2)^2}{4}((u_1)^2+(u_2)^2) \frac{4}{a^2(1-\abs{Y}^2)^2}\dd Y^1 \wedge \dd Y^2\\
&=\int_{Y(U)}(u_1^\sharp)^2+(u_2^\sharp)^2 \dd y^1 \dd y^2\\
&=\norm{u^\sharp}^2_{L^2(Y(U))}.
\end{align*}
Next we estimate the $L^2$ norm of the covariant derivative of $u$.  A computation shows (see for example \cite[Appendix A.3]{CC13})
\begin{equation}
\begin{split}
{\nabla} u & = (\nabla u)_{ij} \dd Y^i \otimes \dd Y^j\\
&=\big \{ \frac{\partial u_1}{\partial Y^1} -\frac{2Y^1u_1}{1-|Y|^2} + \frac{2Y^2u_2}{1-|Y|^2}   \big \} \dd Y^1\otimes \dd Y^1 \\
& + \big \{\frac{\partial u_2}{\partial Y^1} -\frac{2Y^2u_1}{1-|Y|^2} - \frac{2Y^1u_2}{1-|Y|^2}   \big \} \dd Y^1\otimes \dd Y^2 \\
& + \big \{ \frac{\partial u_1}{\partial Y^2} -\frac{2Y^2u_1}{1-|Y|^2} - \frac{2Y^1u_2}{1-|Y|^2}   \big \} \dd Y^2\otimes \dd Y^1 \\
& + \big \{ \frac{\partial u_2}{\partial Y^2} +\frac{2Y^1u_1}{1-|Y|^2} - \frac{2Y^2u_2}{1-|Y|^2}   \big \} \dd Y^2\otimes \dd Y^2.
\end{split}
\end{equation}
Moreover
\[
g(\nabla u, \nabla u)=\left(\frac{a^2(1-\abs{Y}^2)^2}{4}\right)^2 (\nabla u)_{ij}(\nabla u)_{ij}.
\]
Now for each $(i, j), \ 1\leq i, j \leq 2$, we have  $(\nabla u)_{ij}^2$ is bounded by a term that looks like
\[
(\frac {\partial u_j}{\partial Y^i} )^2 + \frac{4\abs{Y}^2(u_1^2+u_2^2)}{(1-\abs{Y}^2)^2},
\]
and by definition
\[
\frac {\partial u_k}{\partial Y^l}\Big |_p=\frac{\partial u_k^\sharp}{\partial y^l}\Big |_{Y(p)}.
\]
It follows
\begin{align}
\norm{\nabla u}^2_{L^2(U)}&=\int_U\left(\frac{a^2(1-\abs{Y}^2)^2}{4}\right)^2 (\nabla u)_{ij}(\nabla u)_{ij} \frac{4}{a^2(1-\abs{Y}^2)^2}\dd Y^1 \wedge \dd Y^2\nonumber\\
&\lesssim \int_{Y(U)}\frac{a^2(1-\abs{y}^2)^2}{4}\left(\abs{\nabla u^\sharp}^2+ \frac{4\abs{y}^2\abs{u^\sharp}^2}{(1-\abs{y}^2)^2}\right)\dd y^1 \dd y^2\nonumber\\
&\lesssim a^2\int_{Y(U)} \left(\abs{\nabla u^\sharp}^2+ \abs{u^\sharp}^2\right)\dd y^1 \dd y^2\nonumber\\
&=a^2\norm{u^\sharp}^2_{H^1_0(Y(U))},\label{easy_inc2}
\end{align}
 where we have used $\abs{y}\leq 1$ on $Y(U)$.

\section{Proof of Theorem \ref{main}: Existence of a nontrivial solution to the Stokes equation on $\Omega(R_0)$.}\label{sectionONE}

As explained in Section \ref{outline}, the solution is obtained in several steps.  
\subsection{Step 1: Setup.}\label{sstep1}
To begin the discussion, we define the function space
\begin{equation}
\mathbb{F} = \big \{ \dd F \in L^2(\mathbb{H}^2(-a^2)) : -\triangle  F = 0   \big \}.
\end{equation}
One can show $\F \subset H^1_0(\H)$ \cite{CC13}.
Consider  the following system on the exterior domain $\Omega (R_0)$, where a nontrivial $\dd F \in \mathbb{F}$ is given.
\begin{equation}\label{HelpfulNSONE}
\begin{split}
-\triangle  v + 2a^2 v + \dd P & = 0 ,\\
\dd^* v & = 0 , \\
v \big |_{\partial \Omega (R_0)} & = \dd F |_{\partial \Omega (R_0)}.
\end{split}
\end{equation}

It is clear that $(v, P)=(\dd F, -2a^2F)$ is a solution to equation \eqref{HelpfulNSONE}. But we are interested in seeking nontrivial solution $v$ to the system \eqref{HelpfulNSONE}, which differs from $\dd F$. To this end, we choose a fixed cut off function $\eta \in C^{\infty}([0, \infty ))$, which satisfies
\begin{equation}\label{cutoffcondition}
\begin{split}
 \chi_{[0,1]} \leq \eta \leq \chi_{[0,2)}, \quad \eta' \leq 0 .
\end{split}
\end{equation}
By means of $\eta$, we can now consider a radially symmetric cut-off function $\eta_{R_0 } \in C_c^{\infty} (B_O(4R_0))$  defined by
\begin{equation}
\eta_{R_0} (x) = \eta\big (\frac{\rho(x)}{2R_0} \big ).
\end{equation}
Then $\eta_{R_0}$  satisfies
\begin{equation}\label{cutoffconditiontwo}
\chi_{B(2R_0)} \leq \eta_{R_0} \leq \chi_{B(4R_0)}.
\end{equation}
We now look for a nontrivial solution $v$ to \eqref{HelpfulNSONE} in the form of
\begin{equation}\label{VandTildeV}
v = \eta_{R_0} \dd F + \widetilde{v} ,
\end{equation}
where $\widetilde{v} \in H^1_0(\Omega (R_0 ))$.  

Since $v$, as a solution to \eqref{HelpfulNSONE}, must be divergence free on $\Omega (R_0)$, by \eqref{VandTildeV}, and using $\dd^\ast (fu)=-g(\dd f, u)+ f \dd^\ast u$, for a function $f$ and a 1-form $u$, it follows that we need
\begin{equation}\label{divergencetildeV}
\dd^* \widetilde{v} = g (\dd \eta_{R_0} , \dd F ).
\end{equation}
So we observe that in order to find a solution $v$ to \eqref{HelpfulNSONE} on $\Omega (R_0)$, which is in the form of \eqref{VandTildeV}, we have to demonstrate how to find a solution $\widetilde{v}\in H^1_0(\Omega (R_0))$ to 
 
\begin{equation}\label{HelpfulNSTildev}
\begin{split}
-\triangle \widetilde{v} + 2a^2 \widetilde{v} + \dd P & = \triangle \big ( \eta_{R_0} \dd F  \big )
-2a^2 \eta_{R_0} \dd F ,\\
\dd^* \widetilde{v} & = g ( \dd \eta_{R_0} , \dd F ),\\
\widetilde{v} \big |_{\partial \Omega (R_0)} & = 0.
\end{split}
\end{equation}

\subsection{Step 2: Finding $w$ such that $\dd^* w  = g ( \dd \eta_{R_0} , \dd F) $ } \label{sstep2} To find $\tilde v$, which solves \eqref{HelpfulNSTildev}, we first find a function $w$ that satisfies the divergence condition in \eqref{HelpfulNSTildev}.  We start by making some remarks.

Under the coordinate system $Y$, introduced in Section \ref{notation}, any $1$-form $u$ on $\Omega (R_0)$ can be written as
\begin{equation*}
u = u_1 \dd Y^1 + u_2 \dd Y^2 ,
\end{equation*}
with the divergence expressed as
\begin{equation}\label{hypdiv}
- \dd ^* u = \frac{a^2 \big ( 1-|Y|^2\big )^2 }{4} \Big ( \frac{\partial u_1}{\partial Y^1} + \frac{\partial u_2}{\partial Y^2}   \Big ).
\end{equation}
We also have
\begin{equation}\label{hypextra}
g(\dd \eta_{R_0} , \dd F ) = \frac{a^2 \big ( 1-|Y|^2\big )^2 }{4} \Big (  \frac{\partial \eta_{R_0} }{\partial Y^1 }\frac{\partial F }{\partial Y^1 } + \frac{\partial \eta_{R_0} }{\partial Y^2 } \frac{\partial F }{\partial Y^2}    \Big ).
\end{equation}
So, in light of \eqref{hypdiv} and \eqref{hypextra},  \eqref{divergencetildeV} is equivalent to
\begin{equation}\label{Easytoregconize}
\Big ( \frac{\partial \widetilde{v}_1 }{\partial Y^1 } + \frac{\partial \widetilde{v}_2 }{\partial Y^2 } \Big ) = -\Big (  \frac{\partial \eta_{R_0} }{\partial Y^1 }\frac{\partial F }{\partial Y^1 } + \frac{\partial \eta_{R_0} }{\partial Y^2 } \frac{\partial F }{\partial Y^2}    \Big ).
\end{equation}

Next, we consider the following two functions\begin{equation*}
\begin{split}
\widetilde{v}_1^\sharp & = \widetilde{v}_{1}\circ Y^{-1} , \\
\widetilde{v}_2^\sharp & = \widetilde{v}_{2}\circ Y^{-1} .
\end{split}
\end{equation*}
Note, in view of the discussion leading to \eqref{eucl_a}, since we study the problem on the exterior domain $\Omega(R_0)\subset \H$, $\widetilde{v}_1^\sharp$ and $\widetilde{v}_2^\sharp$
are defined in $\{ y \in \mathbb{R}^2 : \tanh (\frac{aR_0}{2}) < |y| < 1  \}$ .

We will also consider the following two smooth functions on $D_0(1)$.
\begin{equation*}
\begin{split}
\eta^{\sharp} & = \eta_{R_0} \circ Y^{-1} , \\
F^{\sharp}  & = F \circ Y^{-1} .
\end{split}
\end{equation*}
With the above notation,  \eqref{Easytoregconize} is equivalent to
\begin{equation}\label{EuclidDivcondition}
\Div_{\mathbb{R}^2} \widetilde{v}^{\sharp} = - \nabla_{\mathbb{R}^2} \eta^{\sharp} \cdot \nabla_{\mathbb{R}^2} F^{\sharp},
\end{equation}
which has to be satisfied by $\widetilde{v}^{\sharp}$ on $\{ \tanh{\frac{aR_0}{2}} < |y| <1      \}$.

Note however that
\begin{equation}\label{suppcondition}
\supp \big ( \nabla_{\mathbb{R}^2} \eta^{\sharp} \cdot \nabla_{\mathbb{R}^2} F^{\sharp} \big  )
\subset \Big \{ y \in \mathbb{R}^2 : \tanh \big (aR_0 \big )  \leq |y| \leq \tanh \big ({2a} R_0 \big ) \Big \}.
\end{equation}

We now invoke the following result from the theory of the Navier-Stokes equation in the Euclidean setting.

\begin{theo}\cite[Thm III.3.3]{Galdi}, \cite{Seregin}\label{Classicalresult}
Let $\Omega$ be a bounded domain with smooth boundary in $\mathbb{R}^N$, with $N\geq 2$. Given $h \in L^2(\Omega )$ satisfying
\begin{equation}\label{crucialcondition}
\int_{\Omega} h \Vol_{\mathbb{R}^N} = 0,
\end{equation}
there exists at least one $U \in H^1_0(\Omega )$ such that
\begin{equation*}
\Div_{\mathbb{R}^N} U = h
\end{equation*}
holds in the weak sense on $\Omega$, and that the following a priori estimate holds
\begin{equation*}
\big \| \nabla_{\mathbb{R}^N} U  \big \|_{L^2(\Omega )} \leq C(N, \Omega ) \big \| h \big \|_{L^2(\Omega )} .
\end{equation*}
\end{theo}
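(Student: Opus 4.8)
The plan is to reduce the statement to two ingredients: the (elementary) necessity of the compatibility condition \eqref{crucialcondition}, and a functional-analytic surjectivity statement whose heart is the Ne\v{c}as inequality. First I would record that \eqref{crucialcondition} is forced. If $U\in H^1_0(\Omega)^N$ and $\Div U=h$ in the weak sense, then, approximating $U$ in $H^1_0(\Omega)^N$ by fields $U_k\in C_c^\infty(\Omega)^N$, each $U_k$ satisfies $\int_\Omega \Div U_k=0$ trivially while $\Div U_k\to \Div U=h$ in $L^2(\Omega)$; hence $\int_\Omega h=0$. So the real content is that this necessary condition is also sufficient, with the quantitative bound.

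For existence I would work in the Hilbert space $L^2_0(\Omega)=\{h\in L^2(\Omega):\int_\Omega h=0\}$ and consider the gradient operator $\nabla\colon L^2_0(\Omega)\to H^{-1}(\Omega)^N$ defined by $\langle \nabla p,\varphi\rangle=-\int_\Omega p\,\Div\varphi$ for $\varphi\in H^1_0(\Omega)^N$. Since $H^{-1}(\Omega)^N$ is the dual of $H^1_0(\Omega)^N$, the adjoint of $\nabla$ is, up to sign, the operator $\Div\colon H^1_0(\Omega)^N\to L^2_0(\Omega)$ (its image lies in $L^2_0$ by the paragraph above together with density). The kernel of $\nabla$ on $L^2_0$ is trivial, because a distribution with vanishing gradient is constant and a constant of mean zero is $0$; hence the range of $\Div$ is dense in $L^2_0$. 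By the closed range theorem, $\Div$ is then surjective onto $L^2_0(\Omega)$, and admits a bounded right inverse, precisely when $\nabla$ is bounded below, i.e.
\[
\norm{p}_{L^2(\Omega)}\le C(N,\Omega)\,\norm{\nabla p}_{H^{-1}(\Omega)}\qquad \text{for all } p\in L^2_0(\Omega).
\]
Granting this, for given $h\in L^2_0(\Omega)$ one takes $U$ to be the minimal-norm preimage (equivalently the image of $h$ under the constructed right inverse); since on $H^1_0(\Omega)$ the norm is equivalent to $\norm{\nabla\,\cdot\,}_{L^2}$ by Poincar\'e, the operator norm of that right inverse is controlled by the reciprocal of the constant in the displayed inequality, which yields the a priori estimate $\norm{\nabla U}_{L^2(\Omega)}\le C(N,\Omega)\norm{h}_{L^2(\Omega)}$.

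The main obstacle is therefore the Ne\v{c}as inequality displayed above (equivalently $\norm{p}_{L^2}\lesssim \norm{p}_{H^{-1}}+\norm{\nabla p}_{H^{-1}}$, with the lower-order term absorbed on $L^2_0$ by a compactness argument). I would prove it in the standard way: establish it first on a cube or a half-space by Fourier analysis, then transfer to a general smooth (or Lipschitz) $\Omega$ by localization --- a finite cover by boundary charts flattening $\partial\Omega$, a subordinate partition of unity, and commutator estimates controlling the error terms produced when the cutoffs differentiate onto $p$. An alternative, more constructive route that sidesteps Ne\v{c}as is Bogovski\u{\i}'s explicit operator: on a domain star-shaped with respect to a ball $B$, fix $\omega\in C_c^\infty(B)$ with $\int\omega=1$ and set
\[
U(x)=\int_\Omega h(y)\,\frac{x-y}{\abs{x-y}^N}\int_{\abs{x-y}}^\infty \omega\!\left(y+r\,\frac{x-y}{\abs{x-y}}\right)r^{N-1}\,\dd r\,\dd y ,
\]
for which a direct computation gives $\Div U=h$ whenever $\int_\Omega h=0$, while $\norm{\nabla U}_{L^2}\le C\norm{h}_{L^2}$ follows from Calder\'on--Zygmund theory since the differentiated kernel is a singular integral of convolution type. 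A general bounded smooth $\Omega$ is then written as a finite union of subdomains each star-shaped with respect to a ball, and the local solutions are patched together by a partition of unity, the non-zero-mean errors at each stage being corrected iteratively. In either approach the technical crux is the same: a singular-integral / negative-norm estimate controlled uniformly, together with the localization bookkeeping.
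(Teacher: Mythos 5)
The paper offers no proof of this statement: it is imported verbatim as a known Euclidean result, with the proof delegated to the cited references (Galdi, Thm III.3.3, and Seregin). So there is nothing in the paper to compare your argument against line by line; what can be said is that your sketch correctly reconstructs the two standard proofs. The necessity of \eqref{crucialcondition} via approximation by $C_c^\infty$ fields is right, and the duality route (gradient bounded below on $L^2_0$ in the $H^{-1}$ norm $\Longleftrightarrow$ divergence surjective with bounded right inverse, via the closed range theorem) is the classical functional-analytic argument; its entire content is indeed the Ne\v{c}as inequality, which you correctly identify as the crux. The Bogovski\u{\i} route you give as an alternative is in fact the proof in the cited reference: Galdi's Theorem III.3.3 is precisely the construction of the explicit operator on star-shaped domains followed by the decomposition of a general bounded Lipschitz domain into finitely many star-shaped pieces with an iterative correction of the mean-zero defect. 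The one place where your sketch glosses over real work is the Calder\'on--Zygmund step: after differentiating the Bogovski\u{\i} kernel one does not get a convolution kernel but a singular integral of the form $K(x,x-y)$ with an $x$-dependence entering through $\omega$, so one needs the Calder\'on--Zygmund theory for variable kernels (or the Fourier-multiplier treatment of the principal part plus weakly singular remainders); this is several pages in Galdi but entirely standard. With that caveat, the proposal is a correct account of how the black-boxed theorem is actually proved.
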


With the help of Theorem \ref{Classicalresult}, we establish the following lemma.
\begin{lemm}\label{KeyLEMMA}
There exists some $1$-form $w \in H^1_0( B_O(4R_0) - \overline{B_O(2R_0)}) $ such that
\begin{equation}\label{helpfulhypdivergence}
\dd^* w = g ( \dd \eta_{R_0} , \dd F   )
\end{equation}
holds weakly on $\mathbb{H}^2(-a^2)$, and
 \begin{equation}\label{H1estimateforW}
\big \| \nabla w \big \|_{L^2(\mathbb{H}^2(-a^2))} \leq C(a, R_0) \big \| \dd F \big  \|_{L^2(\mathbb{H}^2(-a^2))} ,
\end{equation}
where $C(a, R_0)$ is some positive absolute constant depending only on $a$ and $R_0$.
\end{lemm}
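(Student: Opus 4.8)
The plan is to push the divergence equation \eqref{helpfulhypdivergence} forward to the Euclidean annulus via the Poincar\'e model $Y$ of Section~\ref{notation}, solve it there with the classical Theorem~\ref{Classicalresult}, and then pull the solution back, using the norm relations of Section~\ref{notation} to obtain \eqref{H1estimateforW}. By \eqref{gdist} and the discussion preceding \eqref{eucl_a}, the hyperbolic annulus $U := B_O(4R_0) - \overline{B_O(2R_0)}$ is carried by $Y$ onto the Euclidean annulus $\Omega := \{\, y \in \mathbb{R}^2 : \tanh(aR_0) < |y| < \tanh(2aR_0)\,\}$, a bounded domain with smooth boundary. By \eqref{hypdiv}, \eqref{hypextra} and \eqref{EuclidDivcondition}, producing a $1$-form $w$ on $U$ with $\dd^* w = g(\dd\eta_{R_0},\dd F)$ amounts to producing $w^\sharp := (Y^{-1})^* w \in H^1_0(\Omega)$ with $\Div_{\mathbb{R}^2} w^\sharp = h$ on $\Omega$, where $h := -\nabla_{\mathbb{R}^2}\eta^\sharp\cdot\nabla_{\mathbb{R}^2}F^\sharp$. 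Here $h \in L^2(\Omega)$: by \eqref{suppcondition} it is supported in $\overline{\Omega}$, and it is smooth there because $F^\sharp = F\circ Y^{-1}$ is harmonic on $D_0(1)$ (the Laplace equation is conformally invariant in dimension two, so $-\Delta F = 0$ forces $\Delta_{\mathbb{R}^2} F^\sharp = 0$).

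Next I would verify the zero-mean compatibility condition \eqref{crucialcondition} for $h$ on $\Omega$. Writing $\nabla_{\mathbb{R}^2}\eta^\sharp\cdot\nabla_{\mathbb{R}^2}F^\sharp = \Div_{\mathbb{R}^2}(\eta^\sharp\,\nabla_{\mathbb{R}^2}F^\sharp)$ (valid since $\Delta_{\mathbb{R}^2}F^\sharp = 0$) and integrating over $\Omega$, the boundary term on the outer circle of $\Omega$ vanishes because $\eta^\sharp \equiv 0$ there, while on the inner circle $\eta^\sharp \equiv 1$, so the remaining term is a flux of $\nabla_{\mathbb{R}^2}F^\sharp$ across a circle bounding a disk on which $F^\sharp$ is harmonic, hence zero; thus $\int_\Omega h\,\dd y = 0$. (Equivalently, using the conformal factors, $\int_\Omega h\,\dd y = -\int_U g(\dd\eta_{R_0},\dd F)\,\Vol_{\H}$, which vanishes by integration by parts since $\eta_{R_0}\in C_c^\infty(B_O(4R_0))$ and $-\Delta F = 0$.) With \eqref{crucialcondition} in hand, Theorem~\ref{Classicalresult} provides $w^\sharp \in H^1_0(\Omega)$ with $\Div_{\mathbb{R}^2}w^\sharp = h$ weakly on $\Omega$ and $\norm{\nabla_{\mathbb{R}^2}w^\sharp}_{L^2(\Omega)} \le C(\Omega)\,\norm{h}_{L^2(\Omega)}$.

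I would then set $w := Y^* w^\sharp$ on $U$ and extend it by zero to $\H$. Since $w^\sharp \in H^1_0(\Omega)$, approximating it in $H^1$ by $1$-forms in $C_c^\infty(\Omega)$ and pulling these back through $Y$, the inequality \eqref{easy_inclusion} (applied to the differences) shows $w \in H^1_0(U)$; then a routine approximation and integration-by-parts argument, using that $w$ has zero trace on $\partial U$ and that $g(\dd\eta_{R_0},\dd F)$ is supported in $\overline U$, upgrades $\Div_{\mathbb{R}^2}w^\sharp = h$ on $\Omega$ to the weak identity \eqref{helpfulhypdivergence} on all of $\H$. For \eqref{H1estimateforW}, by \eqref{easy_inc2} and the Poincar\'e inequality on the bounded set $\Omega$ (applicable since $w^\sharp\in H^1_0(\Omega)$), $\norm{\nabla w}_{L^2(\H)} = \norm{\nabla w}_{L^2(U)} \lesssim a\,\norm{w^\sharp}_{H^1_0(\Omega)} \le C(a,R_0)\,\norm{\nabla_{\mathbb{R}^2}w^\sharp}_{L^2(\Omega)} \le C(a,R_0)\,\norm{h}_{L^2(\Omega)}$. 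Finally $\norm{h}_{L^2(\Omega)} \le \norm{\nabla_{\mathbb{R}^2}\eta^\sharp}_{L^\infty(\Omega)}\,\norm{\nabla_{\mathbb{R}^2}F^\sharp}_{L^2(\Omega)}$, where $\norm{\nabla_{\mathbb{R}^2}\eta^\sharp}_{L^\infty(\Omega)}\le C(a,R_0)$ (differentiate $\eta^\sharp(y) = \eta(\frac{1}{2aR_0}\log\frac{1+|y|}{1-|y|})$ and use $|y|\le\tanh(2aR_0)<1$ on $\Omega$), and $\norm{\nabla_{\mathbb{R}^2}F^\sharp}_{L^2(\Omega)} \le \norm{\nabla_{\mathbb{R}^2}F^\sharp}_{L^2(D_0(1))} = \norm{\dd F}_{L^2(\H)}$, the last equality being the conformal invariance of the Dirichlet integral in two dimensions (the same cancellation of conformal factors as in the $L^2$ computation of Section~\ref{notation}). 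Combining these inequalities yields \eqref{H1estimateforW}.

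The one genuinely delicate point I expect is the verification of the compatibility condition \eqref{crucialcondition}: Theorem~\ref{Classicalresult} fails without it, and it holds here precisely because $F$ is harmonic. Everything else is bookkeeping with the conformal factors, already prepared in Section~\ref{notation}.
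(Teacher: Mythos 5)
Your proposal is correct and follows essentially the same route as the paper: transfer the divergence equation to the Euclidean annulus $A(R_0)=\{\tanh(aR_0)<|y|<\tanh(2aR_0)\}$ via the Poincar\'e model, verify the zero-mean compatibility condition by integration by parts together with the harmonicity of $F^{\sharp}$, apply Theorem~\ref{Classicalresult}, and pull back using the norm comparisons of Section~\ref{notation}. You in fact supply slightly more detail than the paper on the derivation of \eqref{H1estimateforW} (the Poincar\'e inequality on the annulus and the conformal invariance of the Dirichlet integral), and your identification of the compatibility condition as the one delicate point matches the paper's emphasis.
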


\begin{proof}
Let
\begin{equation*}
A(R_0) = \Big \{  y \in \mathbb{R}^2 : \tanh \big ( aR_0 \big ) < |y| < \tanh \big ( 2a R_0 \big )              \Big \}.
\end{equation*}

By \eqref{suppcondition},  the support of $\nabla_{\mathbb{R}^2}\eta^{\sharp} \cdot \nabla_{\mathbb{R}^2}F^{\sharp}$ is included in $\overline{A(R_0)}$. We also note that
\begin{equation*}
Y^{-1} \big ( A(R_0 )   \big ) = \Big \{ x \in \mathbb{H}^2(-a^2) :  2R_0 < \rho (x) < 4 R_0 \Big \} .
\end{equation*}
In order to apply Theorem \ref{Classicalresult}, we have to check if $h = - \nabla_{\mathbb{R}^2}\eta^{\sharp} \cdot \nabla_{\mathbb{R}^2}F^{\sharp}$ verifies \eqref{crucialcondition}.
So we carry out the following computation.

\begin{equation}\label{standard}
\begin{split}
& \int_{A(R_0)} - \nabla_{\mathbb{R}^2}\eta^{\sharp} \cdot \nabla_{\mathbb{R}^2}F^{\sharp} \Vol_{\mathbb{R}^2} \\
= & \int_{A(R_0)} - \Div \Big \{  \eta^{\sharp} \nabla_{\mathbb{R}^2} F^{\sharp}    \Big \} \Vol_{\mathbb{R}^2} \\
= & - \int_{\big \{ |y| = \tanh (2aR_0) \big \} }  \eta^{\sharp} \nabla_{\mathbb{R}^2} F^{\sharp} \cdot \frac{y}{|y|} \dd S \\
&\qquad+ \int_{\big \{ |y| = \tanh (aR_0) \big \} }  \eta^{\sharp} \nabla_{\mathbb{R}^2} F^{\sharp} \cdot \frac{y}{|y|} \dd S
\end{split}
\end{equation}
Since
\begin{equation*}
\begin{split}
\eta^{\sharp} \big |_{\big \{ |y| = \tanh (2aR_0) \big \}  } & = 0 ,\\
\eta^{\sharp} \big |_{\big \{ |y| = \tanh (aR_0) \big \}  } & = 1,
\end{split}
\end{equation*}
it follows from \eqref{standard} that we have
\begin{equation}\label{easyONE}
\int_{A(R_0)} - \nabla_{\mathbb{R}^2}\eta^{\sharp} \cdot \nabla_{\mathbb{R}^2}F^{\sharp} \Vol_{\mathbb{R}^2}
= \int_{\big \{ |y| = \tanh (aR_0) \big \} }   \nabla_{\mathbb{R}^2} F^{\sharp}\cdot \frac{y}{|y|} \dd S .
\end{equation}
Since $F^{\sharp}$ is harmonic on $D_0(1)$, we have
\begin{equation}\label{EasyTwo}
\begin{split}
0 & = \int_{\big \{ |y| < \tanh ({a} R_0) \big \} } \Div_{\mathbb{R}^2} \Big ( \nabla_{\mathbb{R}^2} F^{\sharp} \Big ) \Vol_{\mathbb{R}^2} \\
& = \int_{\big \{ |y| = \tanh ({a} R_0) \big \}}   \nabla_{\mathbb{R}^2} F^{\sharp}\cdot \frac{y}{|y|} \dd S .
\end{split}
\end{equation}
\eqref{easyONE} and \eqref{EasyTwo} together imply that
\begin{equation}\label{important}
\int_{A(R_0 , a)} - \nabla_{\mathbb{R}^2}\eta^{\sharp} \cdot \nabla_{\mathbb{R}^2}F^{\sharp} \Vol_{\mathbb{R}^2} = 0.
\end{equation}
Because \eqref{important} holds, we can apply Theorem \ref{Classicalresult} to deduce that there exists  at least one $w^{\sharp} = w^{\sharp}_1 \dd y^1 + w^{\sharp}_2 \dd y^2 \in H^1_0(A(R_0))$ such that
\begin{equation}\label{EuclidDivTwo}
\Div_{\mathbb{R}^2} w^{\sharp} = - \nabla_{\mathbb{R}^2} \eta^{\sharp}\cdot \nabla_{\mathbb{R}^2} F^{\sharp} .
\end{equation}

Since $w^{\sharp} \in H^1_0(A(R_0))$, we can think of $w^{\sharp}$ as an element also in $H^1_0(D_0(1))$ through zero extension beyond $A(R_0)$, so 
\begin{equation}\label{inclusion}
w^{\sharp} \in H^1_0(A(R_0 )) \subset H^1_0(D_0(1)).
\end{equation}
It is easy to check that $w^{\sharp}$ once being treated as an element in $H^1_0(D_0(1))$ still satisfies \eqref{EuclidDivTwo} weakly on the whole unit disc $D_0(1)$.
 
Finally, we simply define the $1$-form $w$ by
\begin{equation}\label{definitionofw}
w = Y^* w^{\sharp} = w_1^{\sharp}\circ Y \dd Y^1 + w_2^{\sharp} \circ Y \dd Y^2 .
\end{equation}
Then \eqref{inclusion} and \eqref{easy_inclusion} imply
\begin{equation}\label{HypInclusion}
w \in H^1_0 \big ( B_O(4R_0) - \overline{B_O(2R_0)} \big ) \subset H^1_0(\mathbb{H}^2(-a^2)).
\end{equation}
Moreover, since  \eqref{EuclidDivTwo} holds on $D_0(1)$, it follows that $w$ satisfies \eqref{helpfulhypdivergence} on $\mathbb{H}^2(-a^2)$.
So, the proof of Lemma \ref{KeyLEMMA} is now completed.
\end{proof}

\subsection{Step 3: Further Reduction}\label{sstep3}
Being backed up by Lemma \ref{KeyLEMMA}, we can now try to find some element $\widetilde{v}$ which lies in $H^1_0(\Omega(R_0))$, and which is a solution to the system \eqref{HelpfulNSTildev} on $\Omega(R_0)$. To this end, we define

\begin{equation}\label{Defoftildew}
\widetilde{w} = \widetilde{v} - w ,
\end{equation}
where $\widetilde{v} \in H^1_0(\Omega (R_0))$ is a solution to the system \eqref{HelpfulNSTildev}, and
$w \in H^1_0\big( B_O(4R_0) - \overline{B_O(2R_0)}     \big )$ is some element which satisfies \eqref{helpfulhypdivergence} on $\mathbb{H}^2(-a^2)$. It is then clear that $\widetilde{w}$ lies in $H^1_0(\Omega (R_0))$.

Since $\widetilde{v}$ is supposed to be a solution to \eqref{HelpfulNSTildev} on $\Omega (R_0)$, it follows that $\widetilde{w}$ is a solution to the following system on $\Omega (R_0)$.
\begin{equation}\label{NSaboutWidetildeW}
\begin{split}
-\triangle  \widetilde{w} + 2a^2 \widetilde{w} + \dd P & = \triangle\big ( \eta_{R_0} \dd F + w  \big ) -2a^2 \big ( \eta_{R_0} \dd F + w \big )  ,\\
\dd^* \widetilde{w} & = 0 ,\\
\widetilde{w} \big |_{\partial \Omega (R_0)} & = 0 .
\end{split}
\end{equation}

It is now clear that: in order to find a solution $\widetilde{v}$ to system \eqref{HelpfulNSTildev} on $\Omega (R_0)$, we just have to demonstrate the existence of a solution $\widetilde{w}$ to system \eqref{NSaboutWidetildeW} on $\Omega (R_0)$.  We  do this by writiting down a weak formulation for the system \eqref{NSaboutWidetildeW} on $\Omega (R_0)$.

For this purpose, we look for $\widetilde{w}$ as an element in
\begin{equation}\label{definitionofV}
\textbf{V}(\Omega(R_0)) = \overline{\Lambda_{c,\sigma}^1\big ( \Omega (R_0) \big )}^{\|\cdot \|_{H^1}} .
\end{equation}
Here, the symbol $\Lambda_{c,\sigma}^1\big ( \Omega (R_0) \big )$ stands for the space of all smooth, compactly supported divergence free $1$-forms on $\Omega (R_0)$. Notice that $\textbf{V}(\Omega (R_0))$ is a Hilbert space equipped with the following inner product structure. 
\begin{equation}\label{innerproduct}
\begin{split}
&(( \varphi_1 , \varphi_2   ))_{H^1_0(\Omega (R_0))} =2\int_{\Omega(R_0)}g(\Def \phi_1, \Def \phi_2)\Vol_{\H}\\ 
&\qquad\qquad= \int_{\Omega (R_0)} g (\dd \varphi_1 , \dd \varphi_2 ) \Vol_{\mathbb{H}^2(-a^2)}
+ 2 a^2 \int_{\Omega (R_0)} g (\varphi_1 , \varphi_2 ) \Vol_{\mathbb{H}^2(-a^2)} .
\end{split}
\end{equation}
Here, the dual space of the Hilbert space $\textbf{V}(\Omega (R_0))$ is denoted by  $\textbf{V}'(\Omega (R_0))$.
Now, we use the following abbreviation
\begin{equation}\label{ForcingtermT}
\textbf{T} = \triangle \big ( \eta_{R_0} \dd F + w  \big ) -2a^2 \big ( \eta_{R_0} \dd F + w \big )  .
\end{equation}
Since
\begin{equation*}
\begin{split}
\dd F & \in \mathbb{F}\subset H^1_0(\H) ,\\
w & \in H^1_0\big ( B_O(4R_0) - \overline{B_O(2R_0)} \big ) ,
\end{split}
\end{equation*}
it follows that  
\begin{equation}\label{dualspace}
\textbf{T} \in H^{-1} (\Omega (R_0) ) \subset \textbf{V}'(\Omega (R_0)) .
\end{equation}
So to solve  \eqref{NSaboutWidetildeW} in a weak sense in $ \textbf{V}(\Omega (R_0))$ means to find $\tilde w$ such that
\[
(( \widetilde{w} , \varphi    ))_{H^1_0 (\Omega (R_0))} = \big < \textbf{T} , \varphi     \big >_{ \textbf{V}'(\Omega (R_0)) \otimes \textbf{V}(\Omega (R_0))      } 
\]
holds for all $\varphi \in  \textbf{V}(\Omega (R_0))  $.

With the above preparation, we can now state the following lemma, whose proof is a straightforward consequence of the Riesz Representation Theorem.
\begin{lemm}\label{TrivialLemma}
Consider the forcing term $T$ as specified in \eqref{ForcingtermT}. Then, there exists a uniquely determined element
$\widetilde{w} \in \textbf{V}(\Omega (R_0)) $ such that the following relation holds for any  $1$-form
$\varphi \in  \textbf{V}(\Omega (R_0))  $
\begin{equation}\label{weakformulationONE}
(( \widetilde{w} , \varphi    ))_{H^1_0 (\Omega (R_0))} = \big < \textbf{T} , \varphi     \big >_{ \textbf{V}'(\Omega (R_0)) \otimes \textbf{V}(\Omega (R_0))      } .
\end{equation}
Consequently, such a unique $\widetilde{w} \in \textbf{V}(\Omega (R_0)) $ is also a weak solution to the system \eqref{NSaboutWidetildeW} on $\Omega (R_0)$.
\end{lemm}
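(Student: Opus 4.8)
The plan is to read Lemma \ref{TrivialLemma} as nothing more than the Riesz Representation Theorem applied inside the Hilbert space $\big(\textbf{V}(\Omega(R_0)),((\cdot,\cdot))_{H^1_0(\Omega(R_0))}\big)$, together with two routine checks: first, that $((\cdot,\cdot))_{H^1_0(\Omega(R_0))}$ genuinely makes $\textbf{V}(\Omega(R_0))$ a Hilbert space; and second, that \eqref{weakformulationONE} is precisely the weak formulation of \eqref{NSaboutWidetildeW}. I would begin with the first check, which is the only place anything beyond bookkeeping happens. Symmetry and positive‑definiteness of \eqref{innerproduct} are clear, and the upper bound $((\varphi,\varphi))_{H^1_0(\Omega(R_0))}\le C\|\varphi\|_{H^1}^2$ is immediate since $|\dd\varphi|\lesssim|\nabla\varphi|$ pointwise. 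For the matching lower bound I would invoke the polarized form of \eqref{viscop} (which is exactly the first line of \eqref{innerproduct}) together with the Bochner–Weitzenböck identity: for $\varphi\in\Lambda^1_{c,\sigma}(\Omega(R_0))$,
\[
\|\nabla\varphi\|_{L^2}^2=\|\dd\varphi\|_{L^2}^2+\|\dd^*\varphi\|_{L^2}^2+a^2\|\varphi\|_{L^2}^2=\|\dd\varphi\|_{L^2}^2+a^2\|\varphi\|_{L^2}^2,
\]
using $\dd^*\varphi=0$, so that $((\varphi,\varphi))_{H^1_0(\Omega(R_0))}=\|\nabla\varphi\|_{L^2}^2+a^2\|\varphi\|_{L^2}^2\ge\min(1,a^2)\|\varphi\|_{H^1}^2$. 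By the definition \eqref{definitionofV} and density these two‑sided bounds extend to all of $\textbf{V}(\Omega(R_0))$, which is closed in $H^1_0$, so $\textbf{V}(\Omega(R_0))$ is complete for $((\cdot,\cdot))_{H^1_0(\Omega(R_0))}$.

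With that in hand, the existence and uniqueness of $\widetilde{w}$ is immediate. By \eqref{dualspace} the map $\varphi\mapsto\langle\textbf{T},\varphi\rangle_{\textbf{V}'(\Omega(R_0))\otimes\textbf{V}(\Omega(R_0))}$ is a bounded linear functional on the Hilbert space $\textbf{V}(\Omega(R_0))$, so the Riesz Representation Theorem produces a unique $\widetilde{w}\in\textbf{V}(\Omega(R_0))$ with $((\widetilde{w},\varphi))_{H^1_0(\Omega(R_0))}=\langle\textbf{T},\varphi\rangle$ for every $\varphi\in\textbf{V}(\Omega(R_0))$, which is \eqref{weakformulationONE}.

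Finally I would unwind \eqref{weakformulationONE} to confirm $\widetilde{w}$ solves \eqref{NSaboutWidetildeW} weakly. Testing against $\varphi\in\Lambda^1_{c,\sigma}(\Omega(R_0))$ and integrating by parts,
\[
((\widetilde{w},\varphi))_{H^1_0(\Omega(R_0))}=\int_{\Omega(R_0)}g(\dd\widetilde{w},\dd\varphi)\,\Vol_{\H}+2a^2\int_{\Omega(R_0)}g(\widetilde{w},\varphi)\,\Vol_{\H}=\big\langle-\triangle\widetilde{w}+2a^2\widetilde{w},\varphi\big\rangle,
\]
while any pressure gradient drops out against such test forms since $\int_{\Omega(R_0)}g(\dd P,\varphi)\,\Vol_{\H}=\int_{\Omega(R_0)}P\,\dd^*\varphi\,\Vol_{\H}=0$. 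Hence \eqref{weakformulationONE} says exactly that $-\triangle\widetilde{w}+2a^2\widetilde{w}+\dd P=\textbf{T}$ holds when tested against divergence‑free forms, with the remaining two lines of \eqref{NSaboutWidetildeW}, namely $\dd^*\widetilde{w}=0$ and $\widetilde{w}\big|_{\partial\Omega(R_0)}=0$, already encoded in the membership $\widetilde{w}\in\textbf{V}(\Omega(R_0))$; the actual pressure field is recovered separately in Step 4. The only genuinely substantive point is the coercivity estimate of the first paragraph — and even that is just \eqref{viscop} plus the Bochner identity and $\dd^*\varphi=0$ — so the lemma is indeed, as advertised, a direct consequence of Riesz representation.
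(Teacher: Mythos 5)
Your proposal is correct and is exactly the argument the paper intends: the paper dispatches Lemma \ref{TrivialLemma} with the one-line remark that it is ``a straightforward consequence of the Riesz Representation Theorem,'' and you have simply supplied the routine verifications (norm equivalence of \eqref{innerproduct} with the $H^1$ norm via the Bochner identity for divergence-free forms, boundedness of $\textbf{T}$ from \eqref{dualspace}, and the unwinding of \eqref{weakformulationONE} into the weak form of \eqref{NSaboutWidetildeW}). Your coercivity identity $\|\nabla\varphi\|_{L^2}^2=\|\dd\varphi\|_{L^2}^2+a^2\|\varphi\|_{L^2}^2$ for $\dd^*\varphi=0$ is the same one the paper itself uses later, in the display preceding \eqref{substituteWR}, so everything is consistent.
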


\subsection{Step 4: Finding the pressure and putting everything together}\label{sstep4}
Now, we consider such a unique $\widetilde{w} \in \textbf{V}(\Omega (R_0)) $ for which \eqref{weakformulationONE} holds for all $1$-forms $\varphi \in \textbf{V}(\Omega (R_0)) $. Define
$$\textbf{L} : H^1_0 (\Omega (R_0)) \rightarrow \mathbb{R}$$ 
by
\begin{equation}\label{Llinear}
\Big < \textbf{L} , \varphi \Big >_{H^{-1}(\Omega (R_0)) \otimes H^1_0 (\Omega (R_0))} =
(( \widetilde{w} , \varphi  ))_{H^1_0(\Omega (R_0))}  - \Big < \textbf{T}  , \varphi \Big >_{H^{-1} (\Omega (R_0)) \otimes H^1_0 (\Omega (R_0))} .
\end{equation}
The validity of \eqref{weakformulationONE} for all $\varphi \in \textbf{V} (\Omega (R_0))$ simply says that the operator $\textbf{L}$ as given in \eqref{Llinear} satisfies
\begin{equation*}
\textbf{L} \big |_{\textbf{V}(\Omega (R_0))} = 0.
\end{equation*}

This allows us to apply Lemma \ref{RecoverPressure} to $\textbf{L}$ and deduce that there exists a function $P \in L^2_{loc} (\Omega (R_0))$ such that the following relation holds for any $\varphi \in \Lambda_c^1 (\Omega (R_0))$.

\begin{equation*}
(( \widetilde{w} , \varphi  ))_{H^1_0(\Omega (R_0))} + \int_{\Omega (R_0)} P \dd^* \varphi \Vol_{\mathbb{H}^2(-a^2)} = \Big < \textbf{T}  , \varphi \Big >_{H^{-1} (\Omega (R_0)) \otimes H^1_0 (\Omega (R_0))} .
\end{equation*}

This shows that the pair $(\widetilde{w} , P)$ constitutes a weak solution to the system \eqref{NSaboutWidetildeW} on $\Omega (R_0)$ .

So, consequently, 
\begin{equation}\label{tildeVbytildeW}
\widetilde{v} = \widetilde{w} + w \subset  H^1_0(\Omega (R_0))
\end{equation}
and $(\widetilde{v} , P)$ constitutes a weak solution to the system \eqref{HelpfulNSTildev} on $\Omega (R_0)$. Hence, we now take $v$ to be
\begin{equation}\label{FinalformofV}
\begin{split}
v & = \eta_{R_0} \dd F + \widetilde{v} \\
& = \eta_{R_0} \dd F + \widetilde{w} + w.
\end{split}
\end{equation}
Then, it follows that $v \in H^1(\Omega (R_0)) $, and that the pair $\big ( v , P \big )$ is a weak solution to the system \eqref{HelpfulNSONE} on $\Omega (R_0)$ . 

Next, we   consider the $1$-form $u$  given by
\begin{equation}\label{definitionofU}
\begin{split}
u & = v - \dd F  \\
& = \eta_{R_0} \dd F + \widetilde{w} + w - \dd F .
\end{split}
\end{equation}
Since
\begin{equation*}
\begin{split}
& \Big \{ \big ( \eta_{R_0} - 1 \big ) \dd F \Big \} \Big |_{\partial \Omega (R_0))}  = 0 ,\\
& w , \widetilde{w}  \in H^1_0(\Omega (R_0)),
\end{split}
\end{equation*}
it follows that $u$ satisfies
\begin{equation*}
u \in H^1_0(\Omega (R_0)) ,
\end{equation*}
and that the pair $\big ( u , p \big )=\big ( u , P+2a^2 F \big )$ constitutes a weak solution to the steady Stokes system on $\Omega (R_0)$ given in 
 \eqref{StokesEquationi}, which we restate below for convenience. 
 
\begin{equation}\label{StandardNSequation}
\begin{split}
-\triangle u + 2a^2 u + \dd p & = 0 ,\\
\dd^* u & = 0,\\
u \big |_{\partial \Omega (R_0)} & = 0 .
\end{split}
\end{equation}

\subsection{Step 5: Showing the solution is nontrivial}\label{sstep5}
Now, we prove that $u$ is indeed \emph{nontrivial}. To this end, we first establish 
\begin{lemm}\label{judgement}
Consider a radially symmetric cut-off function $\eta_{R_0} \in C_c^{\infty} (\mathbb{H}^2(-a^2))$, which satisfies \eqref{cutoffconditiontwo}, and take a nontrivial harmonic function $F$ on $\mathbb{H}^2(-a^2)$ for which we have $\dd F \in \mathbb{F}$. Let $w \in H^1_0( B_O(4R_0) - \overline{B_O(2R_0)}) $ satisfy \eqref{helpfulhypdivergence} on $\mathbb{H}^2(-a^2)$. Then
\begin{equation}\label{criteriaineq}
\int_{\Omega (R_0)} g(w , \dd F  ) \Vol_{\mathbb{H}^2(-a^2)} = - \int_{\mathbb{H}^2(-a^2)} \eta_{R_0} g(\dd F , \dd F ) \Vol_{\mathbb{H}^2(-a^2)} < 0.
\end{equation}
\end{lemm}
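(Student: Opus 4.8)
The plan is to establish the two parts of \eqref{criteriaineq} separately: first the identity
\[
\int_{\Omega (R_0)} g(w , \dd F) \Vol_{\mathbb{H}^2(-a^2)} = - \int_{\mathbb{H}^2(-a^2)} \eta_{R_0}\, g(\dd F , \dd F) \Vol_{\mathbb{H}^2(-a^2)},
\]
and then the strict negativity of the right-hand side. For the identity, the idea is integration by parts, using that $w$ satisfies the divergence relation $\dd^* w = g(\dd \eta_{R_0} , \dd F)$ from \eqref{helpfulhypdivergence} and that $F$ is harmonic. Since $\dd F$ is exact and $w \in H^1_0$ vanishes near $\partial\Omega(R_0)$ (indeed $w$ is supported in $B_O(4R_0)-\overline{B_O(2R_0)}$, away from the boundary), the pairing $\int g(w,\dd F)$ should be rewritten as $-\int F\, \dd^* w$ (this is the $L^2$-adjointness of $\dd$ and $\dd^*$, valid because $w$ is compactly supported in the interior, so there is no boundary term). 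Then substitute $\dd^* w = g(\dd\eta_{R_0},\dd F)$ to get $-\int F\, g(\dd\eta_{R_0},\dd F)$. A second integration by parts moves the derivative off $\eta_{R_0}$: $-\int F\, g(\dd\eta_{R_0},\dd F) = \int \eta_{R_0}\, \dd^*(F\,\dd F)$, and since $\dd^*(F\,\dd F) = -g(\dd F,\dd F) + F\,\dd^* \dd F = -g(\dd F,\dd F) - F\,\triangle F = -g(\dd F,\dd F)$ using $\triangle F = 0$, we arrive at $-\int \eta_{R_0}\, g(\dd F,\dd F)$, which is the claimed identity. Throughout, $\Omega(R_0)$ can be replaced by $\mathbb{H}^2(-a^2)$ in the various integrals because $\eta_{R_0}\dd F$ and $w$ live on the annulus $2R_0<\rho<4R_0\subset\Omega(R_0)$, so extending the domain of integration changes nothing.

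For the strict inequality, observe that $\eta_{R_0}\ge 0$ and $g(\dd F,\dd F) = |\dd F|^2 \ge 0$, so the integral $\int \eta_{R_0}\,|\dd F|^2$ is nonnegative; it remains to rule out equality with zero. Since $\eta_{R_0}\equiv 1$ on $B_O(2R_0)$ by \eqref{cutoffconditiontwo}, the integral is bounded below by $\int_{B_O(2R_0)} |\dd F|^2$. If this vanished, then $\dd F \equiv 0$ on the open set $B_O(2R_0)$, hence $F$ is locally constant there; but $F$ is harmonic on all of $\mathbb{H}^2(-a^2)$, and by unique continuation for harmonic functions (or simply real-analyticity of harmonic functions on the hyperbolic plane) $F$ would then be constant everywhere, contradicting $\dd F$ being a nontrivial element of $\mathbb F$. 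Therefore $\int_{B_O(2R_0)}|\dd F|^2 > 0$ and the right-hand side of \eqref{criteriaineq} is strictly negative.

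The main obstacle I anticipate is carrying out the integration-by-parts steps rigorously given the regularity at hand: $w$ is only an $H^1_0$ form, not smooth, so the adjointness $\int g(w,\dd F) = -\int F\,\dd^* w$ and the identity $\int \eta_{R_0}\dd^*(F\dd F) = -\int F\,g(\dd\eta_{R_0},\dd F)$ must be justified by a density argument (approximating $w$ by smooth compactly supported forms, using that $F$ and $\eta_{R_0}$ are smooth and $\dd F\in L^2$). One must also confirm there are genuinely no boundary contributions on $\partial\Omega(R_0)$: this is where the precise support property of $w$ from Lemma \ref{KeyLEMMA} — $\supp w \subset \overline{B_O(4R_0)}-B_O(2R_0)$, which is a positive distance from $\partial\Omega(R_0)$ — does the work, allowing us to treat everything as happening on a compact piece of the interior. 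The rest is bookkeeping with the hyperbolic volume form and the formulas \eqref{hypdiv}--\eqref{hypextra} already recorded in Step 2.
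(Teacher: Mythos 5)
Your proposal is correct and reaches \eqref{criteriaineq} by the same overall strategy as the paper (two integrations by parts plus harmonicity of $F$), but the execution of the second integration by parts is genuinely different and noticeably cleaner. The paper, after arriving at $\int_{\Omega(R_0)} F\, g(\dd\eta_{R_0},\dd F)\Vol_{\H}$, transfers to the Euclidean annulus $\{\tanh(\tfrac{a}{2}R_0)<|y|<1\}$ and integrates by parts \emph{there}, which forces it to compute the flux through the inner circle $\{|y|=\tanh(\tfrac{a}{2}R_0)\}$ via the divergence theorem on the inner disk and the identity $\triangle_{\mathbb{R}^2}((F^\sharp)^2)=2|\nabla F^\sharp|^2$ (this is the content of \eqref{boring7}--\eqref{boring12}); that boundary flux is exactly what upgrades the integral of $\eta^\sharp|\nabla F^\sharp|^2$ over the annulus to the integral over all of $D_0(1)$. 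You instead observe that the intermediate integrand $F\,g(\dd\eta_{R_0},\dd F)$ is supported in $\{2R_0\le\rho\le 4R_0\}\subset\Omega(R_0)$, enlarge the domain to all of $\H$ at no cost, and then integrate by parts on the boundaryless manifold using only that $\eta_{R_0}$ has compact support; the identity $\dd^*(F\,\dd F)=-g(\dd F,\dd F)$ for harmonic $F$ finishes it. This eliminates the entire boundary-flux computation. Your argument for the strict inequality (via $\eta_{R_0}\ge\chi_{B(2R_0)}$ and unique continuation/real-analyticity of harmonic functions) is also more explicit than the paper, which leaves the ``$<0$'' essentially unjustified.

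Two cautions for the write-up. First, your sign conventions are internally inconsistent: you use $\int g(\dd f,u)=-\int f\,\dd^*u$ for the adjointness but $\dd^*(fu)=-g(\dd f,u)+f\,\dd^*u$ for the product rule, and these belong to opposite conventions; the two slips cancel and the final identity is right, but under the paper's convention (where $-\dd^*$ is the divergence, cf.\ \eqref{hypdiv}) the correct chain is $\int g(w,\dd F)=\int F\,\dd^*w=\int F\,g(\dd\eta_{R_0},\dd F)=\int\eta_{R_0}\,\dd^*(F\,\dd F)=-\int\eta_{R_0}\,g(\dd F,\dd F)$, with plus signs in the adjointness. Second, the phrase ``throughout, $\Omega(R_0)$ can be replaced by $\H$'' is too strong: it is false for the final integrand, since $\int_{\Omega(R_0)}\eta_{R_0}|\dd F|^2$ and $\int_{\H}\eta_{R_0}|\dd F|^2$ differ by $\int_{B_O(R_0)}|\dd F|^2>0$. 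The enlargement of the domain is legitimate only at the one stage where the integrand is $F\,g(\dd\eta_{R_0},\dd F)$, i.e.\ \emph{before} the second integration by parts; your plan does perform the steps in that order, but the justification should be localized to that step rather than asserted globally.
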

\begin{proof}
Let $w$ be the $1$-form as described in the hypothesis of Lemma \ref{judgement}. Since
\begin{equation*}
w \in H^1_0( B_O(4R_0) - \overline{B_O(2R_0)}) ,
\end{equation*}
it follows that
\begin{equation}\label{Boring4}
\int_{\Omega (R_0)} g (w, \dd F ) \Vol_{\mathbb{H}^2(-a^2)} = \int_{\Omega (R_0)} F \dd^* w  \Vol_{\mathbb{H}^2(-a^2)}.
\end{equation}
But by \eqref{helpfulhypdivergence} 
\begin{equation}\label{boring5}
\int_{\Omega (R_0)} F \dd^* w  \Vol_{\mathbb{H}^2(-a^2)} = \int_{\Omega (R_0)} F  g( \dd \eta_{R_0} , \dd F  ) \Vol_{\mathbb{H}^2(-a^2)}.
\end{equation}
Hence
\begin{equation}\label{boring6}
\begin{split}
& \int_{\Omega (R_0)} g (w, \dd F ) \Vol_{\mathbb{H}^2(-a^2)} \\
= & \int_{\Omega (R_0)} F g( \dd \eta_{R_0} , \dd F  ) \Vol_{\mathbb{H}^2(-a^2)} \\
= & \int_{\Omega(R_0)} F \Big ( \frac{\partial \eta_{R_0}}{\partial Y^1} \frac{\partial F}{\partial Y^1} +
\frac{\partial \eta_{R_0}}{\partial Y^2} \frac{\partial F}{\partial Y^2} \Big ) \dd Y^1 \wedge \dd Y^2 \\
= & \int_{ \big \{ \tanh (\frac{a}{2}R_0) < |y| < 1 \big \} } F^{\sharp}  \nabla_{\mathbb{R}^2}\eta^{\sharp} \cdot  \nabla_{\mathbb{R}^2} F^{\sharp} \dd y^1 \dd y^2 ,
\end{split}
\end{equation}
where  we recall the functions $\eta^{\sharp}$ and $F^{\sharp}$ are given by
\begin{equation*}
\begin{split}
\eta^{\sharp} & = \eta_{R_0} \circ Y^{-1} , \\
F^{\sharp} & = F \circ Y^{-1} .
\end{split}
\end{equation*}
Since $\triangle F = 0$ holds on $\mathbb{H}^2(-a^2)$, we know that $\triangle_{\mathbb{R}^2} F^{\sharp} = 0$ holds on $D_0(1)$. Hence, the following relation holds on $D_0(1)$.
\begin{equation}\label{boring7}
\triangle_{\mathbb{R}^2} \big ( (F^{\sharp})^2 \big ) = 2 \big | \nabla_{\mathbb{R}^2} F^{\sharp} \big |^2 .
\end{equation}
On the other hand, we have
\begin{equation}\label{boring8}
F^{\sharp} \nabla_{\mathbb{R}^2} \eta^{\sharp} \cdot \nabla_{\mathbb{R}^2} F^{\sharp}
= \frac{1}{2} \Div \Big \{ \eta^{\sharp} \nabla_{\mathbb{R}^2} \big ( (F^{\sharp})^2 \big )  \Big \} -\frac{1}{2} \eta^{\sharp} \triangle_{\mathbb{R}^2} \big ( (F^{\sharp})^2 \big ).
\end{equation}
\eqref{boring7} and \eqref{boring8} together give
\begin{equation}\label{boring9}
F^{\sharp} \nabla_{\mathbb{R}^2} \eta^{\sharp} \cdot \nabla_{\mathbb{R}^2} F^{\sharp}
= \frac{1}{2} \Div \Big \{ \eta^{\sharp} \nabla_{\mathbb{R}^2} \big ( (F^{\sharp})^2 \big )  \Big \}
- \eta^{\sharp} \big | \nabla_{\mathbb{R}^2} F^{\sharp} \big |^2 .
\end{equation}
Since
\begin{equation*}
\begin{split}
& \eta^{\sharp} \Big |_{ \big  \{ |y| = \tanh (\frac{a}{2}R_0) \big  \}} = 1 , \\
& \supp \eta^{\sharp} \subset \Big \{ y \in \mathbb{R}^2 : |y| \leq \tanh \big (  {2}a R_0  \big )   \Big \} , \end{split}
\end{equation*}
it follows that we have
\begin{equation}\label{boring10}
\begin{split}
& \int_{\big \{  \tanh (\frac{a}{2} R_0) < |y| < 1  \big \}} \Div \Big \{ \eta^{\sharp} \nabla_{\mathbb{R}^2} \big ( (F^{\sharp})^2 \big )  \Big \} \dd y^1 \dd y^2\\
=&  - \int_{\big \{ |y| = \tanh (\frac{a}{2} R_0) \big \}} \nabla_{\mathbb{R}^2} \big ( (F^{\sharp})^2  \big ) \cdot \frac{y}{|y|} \dd S .
\end{split}
\end{equation}
However, \eqref{boring7} implies that
\begin{equation}\label{boring11}
\begin{split}
& \int_{\big \{ |y| = \tanh (\frac{a}{2} R_0) \big \}} \nabla_{\mathbb{R}^2} \big ( (F^{\sharp})^2  \big )\cdot \frac{y}{|y|} \dd S  \\
= & \int_{D_0(\tanh(\frac{a}{2}R_0))} \Div_{\mathbb{R}^2} \nabla_{\mathbb{R}^2} \big ( (F^{\sharp})^2  \big ) \dd y^1  \dd y^2 \\
=& \int_{D_0(\tanh(\frac{a}{2}R_0))} \triangle_{\mathbb{R}^2} \big ( (F^{\sharp})^2  \big ) \dd y^1  \dd y^2 \\
=& \int_{D_0(\tanh(\frac{a}{2}R_0))} 2 \big | \nabla_{\mathbb{R}^2} F^{\sharp} \big |^2 \dd y^1  \dd y^2 .
\end{split}
\end{equation}
So \eqref{boring10} and \eqref{boring11} give
\begin{equation}\label{boring12}
\begin{split}
& \frac{1}{2}\int_{\big \{  \tanh (\frac{a}{2} R_0) < |y| < 1  \big \}} \Div \Big \{ \eta^{\sharp} \nabla_{\mathbb{R}^2} \big ( (F^{\sharp})^2 \big )  \Big \}  \dd y^1 \dd y^2 \\
= & - \int_{D_0(\tanh(\frac{a}{2}R_0))}  \big | \nabla_{\mathbb{R}^2} F^{\sharp} \big |^2 \dd y^1  \dd y^2 .
\end{split}
\end{equation}
So, through taking \eqref{boring12} into our consideration, we can now integrate \eqref{boring9} to obtain
\begin{equation}\label{boring13}
\begin{split}
& \int_{\big \{ \tanh (\frac{a}{2} R_0) < |y| < 1 \big \}} F^{\sharp} \nabla_{\mathbb{R}^2} \eta^{\sharp} \cdot \nabla_{\mathbb{R}^2} F^{\sharp} \dd y^1  \dd y^2 \\
= & - \int_{D_0(\tanh(\frac{a}{2}R_0))}  \big | \nabla_{\mathbb{R}^2} F^{\sharp} \big |^2 \dd y^1  \dd y^2
- \int_{\big \{ \tanh (\frac{a}{2} R_0) < |y| < 1 \big \}} \eta^{\sharp} \big | \nabla_{\mathbb{R}^2} F^{\sharp} \big |^2 \dd y^1  \dd y^2 \\
= & - \int_{D_0(1)} \eta^{\sharp} \big | \nabla_{\mathbb{R}^2} F^{\sharp} \big |^2 \dd y^1  \dd y^2 ,
\end{split}
\end{equation}
where the last equality follows simply because $\eta^{\sharp}\big |_{D_0(\tanh (\frac{a}{2}R_0) )} = 1$.
However, 
\begin{equation}\label{boring14}
\int_{D_0(1)} \eta^{\sharp} \big | \nabla_{\mathbb{R}^2} F^{\sharp} \big |^2 \dd y^1  \dd y^2
= \int_{\mathbb{H}^2(-a^2)} \eta_{R_0} g (\dd F , \dd F ) \Vol_{\mathbb{H}^2(-a^2)}.
\end{equation}
So, \eqref{boring13} and \eqref{boring14} together give
\begin{equation}\label{boringfinal}
\begin{split}
&\int_{\big \{ \tanh (\frac{a}{2} R_0) < |y| < 1 \big \}} F^{\sharp} \nabla_{\mathbb{R}^2} \eta^{\sharp} \cdot \nabla_{\mathbb{R}^2} F^{\sharp} \dd y^1 \dd y^2 \\
= &- \int_{\mathbb{H}^2(-a^2)} \eta_{R_0} g (\dd F , \dd F ) \Vol_{\mathbb{H}^2(-a^2)}
\end{split}
\end{equation}
Finally, through combining \eqref{boring6} with \eqref{boringfinal}, we deduce  \eqref{criteriaineq} as needed. 
\end{proof}

Many thanks to Lemma \ref{judgement}, we can go back to the $1$-form $u$ as specified in \eqref{definitionofU} and show that $u$ is indeed a nontrivial element in $H^1_0(\Omega (R_0))$. To achieve this, we prove the following slightly more general result, which we use again in Section \ref{sectionTWO} to show that the solution to the steady Navier Stokes is also nontrivial.

\begin{lemm}\label{JudgementTWO}
Let $\eta_{R_0}, dF, w$ be as in Lemma \ref{judgement}. 
Then, no matter which element $\widetilde{w} \in \textbf{V}(\Omega (R_0))$ we choose, the following term is always a non-zero element in $H^1_0(\Omega (R_0))$.
\begin{equation}\label{termofourFocus}
\big ( \eta_{R_0} -1  \big ) \dd F + w + \widetilde{w}.
\end{equation}
\end{lemm}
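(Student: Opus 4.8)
The plan is to argue by contradiction: suppose that for some $\widetilde{w} \in \textbf{V}(\Omega(R_0))$ the $1$-form $u := (\eta_{R_0} - 1)\dd F + w + \widetilde{w}$ vanishes identically in $H^1_0(\Omega(R_0))$. The strategy is to pair this hypothetical identity against the harmonic form $\dd F$ over $\Omega(R_0)$ and extract the inequality of Lemma \ref{judgement}, which will be contradicted. Concretely, I would first observe that $u = 0$ rearranges to
\[
w + \widetilde{w} = (1 - \eta_{R_0})\,\dd F
\]
as elements of $H^1_0(\Omega(R_0))$. I would then take the $L^2$ inner product of both sides against $\dd F$ (which is legitimate since $\dd F \in \mathbb{F} \subset H^1_0(\H)$ and all terms are in $H^1_0(\Omega(R_0))$), obtaining
\[
\int_{\Omega(R_0)} g(w, \dd F)\,\Vol_{\H} + \int_{\Omega(R_0)} g(\widetilde{w}, \dd F)\,\Vol_{\H} = \int_{\Omega(R_0)} (1 - \eta_{R_0})\, g(\dd F, \dd F)\,\Vol_{\H}.
\]

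The key step, and the part requiring the most care, is to show that the cross term $\int_{\Omega(R_0)} g(\widetilde{w}, \dd F)\,\Vol_{\H}$ vanishes for \emph{every} $\widetilde{w} \in \textbf{V}(\Omega(R_0))$. Since $\textbf{V}(\Omega(R_0))$ is the $H^1$-closure of smooth compactly supported divergence-free $1$-forms on $\Omega(R_0)$, by continuity it suffices to check this for $\widetilde{w} = \varphi \in \Lambda^1_{c,\sigma}(\Omega(R_0))$. For such $\varphi$, we have $\dd^*\varphi = 0$ and $\varphi$ has compact support in $\Omega(R_0)$, so integration by parts gives
\[
\int_{\Omega(R_0)} g(\varphi, \dd F)\,\Vol_{\H} = \int_{\Omega(R_0)} F\, \dd^*\varphi\,\Vol_{\H} = 0,
\]
with no boundary contribution because $\varphi$ is compactly supported. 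Passing to the limit in the $H^1$ norm (and noting $\dd F \in L^2$) shows $\int_{\Omega(R_0)} g(\widetilde{w}, \dd F)\,\Vol_{\H} = 0$ for all $\widetilde{w} \in \textbf{V}(\Omega(R_0))$.

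Combining these, the hypothetical identity $u = 0$ forces
\[
\int_{\Omega(R_0)} g(w, \dd F)\,\Vol_{\H} = \int_{\Omega(R_0)} (1 - \eta_{R_0})\, g(\dd F, \dd F)\,\Vol_{\H} \ge 0.
\]
But Lemma \ref{judgement} asserts precisely that the left-hand side equals $-\int_{\H} \eta_{R_0}\, g(\dd F, \dd F)\,\Vol_{\H} < 0$, since $F$ is nontrivial harmonic and $\eta_{R_0} \ge 0$ is not identically zero. This contradiction shows $u \ne 0$ in $H^1_0(\Omega(R_0))$, completing the proof. The one point to be careful about is the integrability needed to justify the pairing against $\dd F$ over the unbounded domain $\Omega(R_0)$ and the passage to the limit; this is handled by $\dd F \in L^2(\H)$ together with $w + \widetilde{w} \in H^1_0(\Omega(R_0)) \subset L^2$, so Cauchy--Schwarz controls everything.
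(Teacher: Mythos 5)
Your proposal is correct and follows essentially the same route as the paper: assume the sum vanishes, use that $\int_{\Omega(R_0)} g(\widetilde{w},\dd F)\,\Vol_{\mathbb{H}^2(-a^2)}=0$ for $\widetilde{w}\in\textbf{V}(\Omega(R_0))$, pair against $\dd F$, and contradict the strict inequality \eqref{criteriaineq} of Lemma \ref{judgement}. The only difference is that you justify the orthogonality of $\widetilde{w}$ to $\dd F$ explicitly by density and integration by parts, a step the paper asserts without detail; your version with $\geq 0$ on the right-hand side still yields the contradiction with the strict negativity in Lemma \ref{judgement}.
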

\begin{proof}
Suppose
\begin{equation}\label{fakerelation1}
(\eta_{R_0}-1) \dd F + \widetilde{w} + w  = 0 .
\end{equation}
Since $\widetilde{w} \in \textbf{V} (\Omega (R_0))$,
\begin{equation}\label{trivialTrivial}
\int_{\Omega (R_0)} g (\widetilde{w}  , \dd F ) \Vol_{\mathbb{H}^2(-a^2)} =0.
\end{equation}
Then we can test \eqref{fakerelation1} against $\dd F$ over $\Omega (R_0)$ to obtain
\begin{equation*}
\int_{\Omega (R_0)} g ( w, \dd F   ) \Vol_{\mathbb{H}^2(-a^2)} = \int_{\Omega (R_0)} \big ( 1 - \eta_{R_0}   \big ) g (\dd F , \dd F ) \Vol_{\mathbb{H}^2(-a^2)} > 0 ,
\end{equation*}
which violates \eqref{criteriaineq} as stated in Lemma \ref{judgement}. 
\end{proof}

We can summarize our discussion in the following theorem, which is a detailed version of Theorem \ref{main}.
\begin{theo}
Consider a radially symmetric cut-off function $\eta_{R_0} \in C_c^{\infty} (\mathbb{H}^2(-a^2))$ which satisfies constraint \eqref{cutoffconditiontwo}, and take a nontrivial harmonic function $F$ on $\mathbb{H}^2(-a^2)$ for which we have $\dd F \in \mathbb{F}$. Consider some element $w \in H^1_0( B_O(4R_0) - \overline{B_O(2R_0)}) $ which satisfies \eqref{helpfulhypdivergence} on $\mathbb{H}^2(-a^2)$, and whose existence is ensured already by Lemma \ref{KeyLEMMA}.
Let $\widetilde{w}$ to be the unique element in $\textbf{V} \big ( \Omega (R_0) \big )$ for which \eqref{weakformulationONE} holds for any test $1$-form $\varphi \in \textbf{V} \big ( \Omega (R_0) \big )$. We then take the $1$-form $u$ to be
\begin{equation}\label{DefinitionUsecond}
u = \big (  \eta_{R_0} - 1  \big ) \dd F + w + \widetilde{w} .
\end{equation}
Then, it follows that $u \in H^1_0 (\Omega (R_0))$ and that $u$ is \textbf{nontrivial} on $\Omega (R_0)$. Moreover, $u$ is a solution to the following stationary Stokes equation on $\Omega (R_0)$.
\begin{equation}\label{StokesEquation}
\begin{split}
-\triangle  u + 2a^2 u + \dd p & = 0 ,\\
\dd^* u & = 0,\\
u \big |_{\partial \Omega (R_0)} & = 0 ,
\end{split}
\end{equation}
where the associated pressure $p$ is exactly the same one which appears in system \eqref{StandardNSequation}.
\end{theo}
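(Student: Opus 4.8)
The plan is to assemble the pieces that have already been constructed in Steps 1--4 and verify each of the three assertions in the summary theorem. The membership $u \in H^1_0(\Omega(R_0))$ is the easiest: by hypothesis $\widetilde w \in \textbf{V}(\Omega(R_0)) \subset H^1_0(\Omega(R_0))$ and, by Lemma \ref{KeyLEMMA}, $w \in H^1_0(B_O(4R_0) - \overline{B_O(2R_0)}) \subset H^1_0(\Omega(R_0))$, while $(\eta_{R_0}-1)\dd F$ vanishes on $\partial\Omega(R_0)$ (since $\eta_{R_0}\equiv 1$ near $\partial\Omega(R_0)$) and lies in $H^1$ because $\dd F \in \mathbb{F} \subset H^1_0(\H)$ and $\eta_{R_0}$ is a smooth compactly supported cutoff; hence the sum is in $H^1_0(\Omega(R_0))$. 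The nontriviality is an immediate quotation of Lemma \ref{JudgementTWO}: that lemma says precisely that $(\eta_{R_0}-1)\dd F + w + \widetilde w \neq 0$ in $H^1_0(\Omega(R_0))$ for \emph{every} choice of $\widetilde w \in \textbf{V}(\Omega(R_0))$, and in particular for the unique $\widetilde w$ supplied by Lemma \ref{TrivialLemma}.

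For the PDE statement, I would retrace the substitutions of Steps 1--4 in order. Starting from the weak solution $(\widetilde w, P)$ of \eqref{NSaboutWidetildeW} produced in Step 4, set $\widetilde v = \widetilde w + w$; since $w$ satisfies $\dd^\ast w = g(\dd\eta_{R_0},\dd F)$ weakly (Lemma \ref{KeyLEMMA}) and $\dd^\ast \widetilde w = 0$, the pair $(\widetilde v, P)$ solves \eqref{HelpfulNSTildev}. Next set $v = \eta_{R_0}\dd F + \widetilde v$; the identity $\dd^\ast(f u) = -g(\dd f, u) + f\dd^\ast u$ with $\dd^\ast(\dd F)=0$ gives $\dd^\ast v = 0$, and adding $\triangle(\eta_{R_0}\dd F) - 2a^2\eta_{R_0}\dd F$ back in shows $(v,P)$ solves \eqref{HelpfulNSONE}. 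Finally $u = v - \dd F = (\eta_{R_0}-1)\dd F + w + \widetilde w$; since $(\dd F, -2a^2 F)$ solves the homogeneous equation $-\triangle(\dd F) + 2a^2\dd F + \dd(-2a^2 F) = 0$ with $\dd^\ast \dd F = 0$, subtracting it from \eqref{HelpfulNSONE} yields that $(u, p)$ with $p = P + 2a^2 F$ solves \eqref{StokesEquation}, with the boundary condition $u|_{\partial\Omega(R_0)} = 0$ coming from $\dd F|_{\partial\Omega(R_0)} - \dd F|_{\partial\Omega(R_0)} = 0$ together with $w, \widetilde w \in H^1_0(\Omega(R_0))$. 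All of this amounts to bookkeeping that has essentially been carried out already in Section \ref{sstep4}, so the proof is mostly a matter of collecting the statements.

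The one point that deserves genuine care --- and which I expect to be the main obstacle --- is the transition from the \emph{restricted} weak formulation \eqref{weakformulationONE}, which holds only against test forms $\varphi \in \textbf{V}(\Omega(R_0))$ (i.e.\ $H^1$-limits of compactly supported divergence-free forms), to a genuine weak solution of the full system \eqref{StandardNSequation} tested against all $\varphi \in \Lambda^1_c(\Omega(R_0))$, including those that are not divergence free. This is exactly where the pressure enters: one must invoke the de~Rham--type lemma (referred to as Lemma \ref{RecoverPressure} in Step 4) to produce $P \in L^2_{loc}(\Omega(R_0))$ with $((\widetilde w,\varphi)) + \int_{\Omega(R_0)} P\,\dd^\ast\varphi\,\Vol_{\H} = \langle \textbf{T},\varphi\rangle$ for all $\varphi \in \Lambda^1_c(\Omega(R_0))$. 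I would therefore state clearly that the linear functional $\textbf{L}$ of \eqref{Llinear} vanishes on $\textbf{V}(\Omega(R_0))$, apply Lemma \ref{RecoverPressure} to recover $P$, and only then propagate $P$ (shifted by $2a^2 F$) through the chain of substitutions above; the remaining identities are routine, but getting the quantifier on the test forms right is the substantive step.
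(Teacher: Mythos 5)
Your proposal is correct and follows essentially the same route as the paper: the theorem is a summary of Steps 1--5 of Section \ref{sectionONE}, and you correctly assemble $H^1_0$ membership from the structure of the three summands, obtain the PDE by retracing the substitutions $\widetilde v = \widetilde w + w$, $v = \eta_{R_0}\dd F + \widetilde v$, $u = v - \dd F$ with $p = P + 2a^2F$ (including the pressure recovery via Lemma \ref{RecoverPressure}, which is exactly where the paper passes from divergence-free test forms to general ones), and deduce nontriviality from Lemma \ref{JudgementTWO}. No gaps.
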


\subsection{Step 6: Showing the solution is not a potential flow}\label{step6}
Here we prove Theorem \ref{main3}.

We actually prove something stronger.  Suppose $v\in H^1_0(\Omega(R_0))$, and $\dd^\ast v=0$.  If in addition $v$ is a potential flow, then $v=\dd f$ for some function $f$ on $\Omega(R_0)$ that must be harmonic since $v$ is divergence free. Writing $v$ in coordinates we have
\[
v=\partial_1 f \dd Y^1+\partial_2 f \dd Y^2.
\]
Let $f^\sharp=f\circ Y^{-1}$.  Then $f^\sharp$ is harmonic on the annulus $$A=\{ y\in \R^2: \tanh (\frac a2 R_0)<\abs{y}<1\}.$$  Next, we can let 
\[
w=w_1 \dd y^1+w_2 \dd y^2,
\]
where
\[
w_1=\partial_1 f,\quad w_2=-\partial_2 f.
\]

Since $f^\sharp$ is harmonic, it follows 
\[
\partial_1w_1=\partial^2_1 f^\sharp=-\partial_2^2 f^\sharp=\partial_2 w_2.
\]

And just by commuting the derivatives that $\partial_2 w_1=-\partial_1 w_2.$  Then
\[
F=w_1+iw_2,
\]
must be analytic on $A$.  Observe, we also have $w_j,$  $j=1, 2$, is harmonic, and $w_j\equiv 0$ on $\partial D_{\tanh(\frac a2 R_0)}(0)$.   By elliptic theory (see for example \cite[Theorem 8.30]{GilbargTrudinger}) $w_j \in C(\{\tanh(\frac a2 R_0)\leq \abs{z}\leq r_0 \})$ for any $r_0$ satisfying $0<\tanh (\frac a2 R_0)<r_0<1$.  So $F$ is continuous as well on $\{\tanh(\frac a2 R_0)\leq \abs{z}\leq r_0 \}$, and
\be\label{F0}
F\equiv 0\quad\mbox{on}\quad \{\abs{z}=\tanh(\frac a2 R_0)\}.
\ee

On the other hand, because $F$ is analytic in $A$, by the Laurent series expansion, for any $z\in \{\tanh(\frac a2 R_0)\leq \abs{z}\leq r_0 \}$,
\[
F(z)=\sum_{k=-\infty}^{+\infty}a_k z^k ,
\]
 where 
\[
a_k=\frac{1}{2\pi i}\int_{\abs{z}=\delta}\frac{F(z)}{z^{k+1}} \dd z,
\]
for $0<\tanh (\frac a2 R_0)<\delta\leq r_0<1$.

We now estimate $a_k$.
\[
\abs{a_k}=\abs{\frac{1}{2\pi \delta^k }\int_0^{2\pi} \frac{F(\delta e^{i\theta})}   {e^{ik\theta}}  \dd \theta}\leq \frac{1}{2\pi \delta^k}\int^{2\pi}_0 
\abs{F(\delta e^{i\theta})} \dd \theta \rightarrow 0,
\]
as $\delta \rightarrow \tanh(\frac a2 R_0)$ by continuity of $F$ and \eqref{F0}.  Since this is true for all $a_k$, $F$ must be trivial everywhere on $\{\tanh(\frac a2 R_0)\leq \abs{z}\leq r_0 \}$.  Because $r_0$ was arbitrary, $F$ is trivial everywhere on $A$, and hence $v$ is trivial on $\Omega(R_0)$, but we know that $v$ is nontrivial.  So $v$ cannot be a potential flow.

\section{Existence of $H^1_0$-Stationary Navier-Stokes flow in the exterior domain}\label{sectionTWO}

In this section, we demonstrate how to obtain a nontrivial $H^1_0$ solution to the following system on the exterior domain $\Omega (R_0)$.
\begin{equation}\label{TrueNavierStokeseq}
\begin{split}
 -\triangle  v + 2a^2 v + \nabla_v v + \dd P & = 0 , \\
\dd^* v & = 0 , \\
v \big |_{\partial \Omega (R_0)} & = 0 .
\end{split}
\end{equation}

We  split the argument into different steps as follows.
\subsection{About our preferred form of the solution to \eqref{TrueNavierStokeseq}.  }\label{subsection4.1}
Motivated by our success in Section \ref{sectionONE}, the nontrivial solution $v$ to \eqref{TrueNavierStokeseq} on $\Omega (R_0)$ will take the form

\begin{equation}\label{FormoftheSol}
v = \big ( \eta_{R_0} - 1 \big ) \dd F + w + \widetilde{w} ,
\end{equation}
where $F$ is a nontrivial harmonic function on $\mathbb{H}^2(-a^2)$ for which $\dd F \in \mathbb{F}$,
$w$ is the element as specified in Lemma \ref{KeyLEMMA}, and $\widetilde{w} \in \textbf{V}\big ( \Omega (R_0) \big )$.
Now, saying that the element $v$ as given in \eqref{FormoftheSol} is a solution to \eqref{TrueNavierStokeseq} is the same as saying that $\widetilde{w} \in \textbf{V} \big ( \Omega (R_0)  \big )$ is a solution to the following system on
$\Omega (R_0)$.
\begin{equation}\label{TureNSSecond}
\begin{split}
-\triangle \widetilde{w}  + 2a^2 \widetilde{w} + \nabla_{\widetilde{w}} \widetilde{w} + \nabla_{\widetilde{w}} \Psi
+ \nabla_{\Psi} \widetilde{w} + \dd P  & = \Phi , \\
\dd^* \widetilde{w} & = 0 ,\\
\widetilde{w} \big |_{\partial \Omega (R_0)} & = 0,
\end{split}
\end{equation}
where
\begin{equation}\label{PsiandPhi}
\begin{split}
\Psi & = \big ( \eta_{R_0} - 1  \big ) \dd F  + w , \\
\Phi & = \triangle \Psi - 2a^2 \Psi - \nabla_{\Psi} \Psi .
\end{split}
\end{equation}

\subsection{The estimates for $\|w\|_4$, $\|\nabla w\|_2$,  $\|\Psi\|_4$, $\|\nabla \Psi\|_2$. }
To find $\tilde w$, we first need some estimates.  We estimate $\|w\|_{L^4(\mathbb{H}^2(-a^2))}$, $\|\nabla w \|_{L^2(\mathbb{H}^2(-a^2))}$, as well as $\|\Psi\|_{L^4(\mathbb{H}^2(-a^2))}$ , $\| \nabla \Psi \|_{L^2(\mathbb{H}^2(-a^2))}$, all in terms of $\|\dd F\|_{L^2(\mathbb{H}^2(-a^2))}$. We use the following results.
\begin{lemm}\cite{CC13}
For any harmonic function $F$ on $\mathbb{H}^2(-a^2)$ for which $\dd F \in \mathbb{F}$, the following a priori estimate holds.
\begin{equation}\label{estimateharmonic}
 \big \| \nabla \dd F \big \|_{L^2(\mathbb{H}^2(-a^2))}  \leq C_a \big \| \dd F \big \|_{L^2(\mathbb{H}^2(-a^2))} .
\end{equation}
\end{lemm}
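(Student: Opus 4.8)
The plan is to establish the a priori estimate \eqref{estimateharmonic} by transferring the problem to the Poincar\'e disk via the conformal equivalence, exactly as in the computations of Section \ref{notation}, and then to invoke standard interior elliptic estimates for harmonic functions in the Euclidean setting. Writing $F^\sharp = F\circ Y^{-1}$, we know that $-\triangle_{\mathbb{R}^2} F^\sharp = 0$ on $D_0(1)$ (conformality of $Y$ in dimension two preserves harmonicity of functions), so each partial derivative $\partial_i F^\sharp$ is again harmonic on $D_0(1)$, and $\nabla_{\mathbb{R}^2} F^\sharp$ is a harmonic $1$-form there. The hypothesis $\dd F \in \mathbb{F}$ means $\dd F \in L^2(\mathbb{H}^2(-a^2))$, and by the norm comparison \eqref{easy_inclusion} together with the computation preceding \eqref{easy_inc2}, this $L^2$-control on the hyperbolic side corresponds to $\nabla_{\mathbb{R}^2} F^\sharp \in L^2(D_0(1))$ with $\|\nabla_{\mathbb{R}^2}F^\sharp\|_{L^2(D_0(1))} \eqsim \|\dd F\|_{L^2(\mathbb{H}^2(-a^2))}$.

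The core of the argument is then a second-derivative bound for the harmonic function $F^\sharp$ that is \emph{uniform as one approaches the boundary circle $|y|=1$}. First I would cover the punctured disk by a family of Euclidean balls $D_{y_0}(\tfrac{1-|y_0|}{2})$ whose doubled versions still lie inside $D_0(1)$; on each such ball the mean value property / interior gradient estimate for the harmonic function $\partial_i F^\sharp$ gives a pointwise bound on $\partial_j\partial_i F^\sharp(y_0)$ by $(1-|y_0|)^{-1}$ times an $L^2$ average of $\nabla_{\mathbb{R}^2} F^\sharp$ over the enlarged ball. Squaring, integrating in $y_0$, and using that the enlarged balls have bounded overlap (a Vitali/Besicovitch-type packing argument) produces
\[
\int_{D_0(1)} (1-|y|^2)^2 \,|\nabla^2_{\mathbb{R}^2} F^\sharp|^2 \dx \leq C \int_{D_0(1)} |\nabla_{\mathbb{R}^2} F^\sharp|^2 \dx .
\]
The weight $(1-|y|^2)^2$ is precisely the Jacobian-type factor that appears when one rewrites the hyperbolic $L^2$ norm of $\nabla \dd F$ in Euclidean coordinates (compare the middle line of \eqref{easy_inc2}, where the same weight shows up), so this weighted Euclidean estimate is exactly what is needed to bound $\|\nabla \dd F\|_{L^2(\mathbb{H}^2(-a^2))}$. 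One must also keep track of the zeroth-order terms $|\nabla_{\mathbb{R}^2}F^\sharp|^2/(1-|y|^2)^0$ coming from the Christoffel-symbol contributions in the formula for $\nabla u$ displayed before \eqref{easy_inc2}, but those are already controlled by $\|\dd F\|_{L^2}^2$ by the $L^2$ norm comparison, so they cause no trouble.

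The main obstacle is the boundary behavior: harmonic functions on $D_0(1)$ that are merely in the (weighted) energy class can be quite wild near $|y|=1$, so one cannot simply quote a global elliptic estimate on a fixed larger domain. The weight $(1-|y|)$ must be used honestly, and the scale-invariance of the interior estimate on balls of radius comparable to the distance to the boundary is what saves the day — this is the standard ``Caccioppoli on shrinking balls'' mechanism, and morally it reflects the fact that $\mathbb{H}^2(-a^2)$ with the hyperbolic metric has bounded geometry, so the constant $C_a$ in \eqref{estimateharmonic} is a genuine uniform constant. Since this lemma is quoted from \cite{CC13}, I would present the disk-model reduction and the shrinking-ball estimate as the skeleton of the proof and refer to \cite{CC13} for the remaining routine packing and summation details.
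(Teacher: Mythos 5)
The paper does not actually prove this lemma---it is imported from \cite{CC13}---and the argument there is the intrinsic Bochner--Weitzenb\"ock one: since $F$ is harmonic, $\omega=\dd F$ is a Hodge-harmonic $1$-form, the Weitzenb\"ock identity $-\triangle=\nabla^*\nabla+\Ric$ on $1$-forms of $\H$ gives $\nabla^*\nabla\omega=a^2\omega$, and pairing with $\omega$ and integrating by parts against cutoffs (legitimate because $\omega\in L^2$ and the cutoffs have bounded gradient) yields the identity $\|\nabla\dd F\|_{L^2}^2=a^2\|\dd F\|_{L^2}^2$, i.e.\ the sharp constant $C_a=a$. Your route is genuinely different and, as far as I can check, correct: by the computation preceding \eqref{easy_inc2}, $\|\nabla\dd F\|_{L^2(\H)}^2$ is bounded by $a^2\int_{D_0(1)}(1-|y|^2)^2|\nabla^2_{\mathbb{R}^2}F^\sharp|^2\dd y$ plus Christoffel contributions that the weight cancels down to $a^2\|\nabla_{\mathbb{R}^2}F^\sharp\|^2_{L^2(D_0(1))}=a^2\|\dd F\|_{L^2(\H)}^2$ (conformal invariance of the $L^2$ norm of $1$-forms in dimension two), and the weighted second-derivative term is indeed controlled by $\|\nabla_{\mathbb{R}^2}F^\sharp\|^2_{L^2(D_0(1))}$: with $r(y_0)=(1-|y_0|)/4$, the mean value property for $\partial_j\partial_iF^\sharp$ together with Caccioppoli on $D_{y_0}(2r(y_0))$ gives $r(y_0)^2|\nabla^2F^\sharp(y_0)|^2\le C\,r(y_0)^{-2}\int_{D_{y_0}(2r(y_0))}|\nabla F^\sharp|^2$, and Fubini plus the bounded-overlap observation (for fixed $y$ the admissible $y_0$ fill a set of measure $\lesssim(1-|y|)^2$ on which $r(y_0)\sim r(y)$) closes the estimate without any boundary regularity of $F^\sharp$, exactly as you say. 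The trade-off: the Bochner argument is shorter, intrinsic (it works on any manifold with Ricci curvature bounded below, with no disk model), and gives the sharp constant; your argument is elementary and self-contained given Section \ref{notation}, at the cost of a non-sharp $C_a$ and the packing bookkeeping, so if you present it you should write out the Caccioppoli-plus-overlap step rather than deferring it to \cite{CC13}, which does not argue this way.
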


\begin{lemm}\cite{CC13}
The following a priori estimate holds for any $1$-form $\varphi \in H^1_0 (\mathbb{H}^2(-a^2))$.
\begin{equation}\label{Ladyzhenskaya}
\big \| \varphi \big \|_{L^4 (\mathbb{H}^2(-a^2))} \leq C_a \big \| \varphi \big \|_{L^2 (\mathbb{H}^2(-a^2))}^{\frac{1}{2}} \Big \{ \big \| \varphi \big \|_{L^2 (\mathbb{H}^2(-a^2))} + \big \| \nabla \varphi \big \|_{L^2 (\mathbb{H}^2(-a^2))}  \Big  \}^{\frac{1}{2}} .
\end{equation}
\end{lemm}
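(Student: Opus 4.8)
\emph{Proof proposal.} The plan is to reduce the $1$-form inequality to a scalar Gagliardo--Nirenberg--Ladyzhenskaya inequality on $\mathbb{H}^2(-a^2)$, and then to prove the scalar inequality by exploiting the bounded geometry of the hyperbolic plane. Since $\mathbb{H}^2(-a^2)$ is complete, $C_c^\infty$ is dense in $H^1$, so $H^1_0(\mathbb{H}^2(-a^2)) = H^1(\mathbb{H}^2(-a^2))$ and it is enough to prove the estimate for smooth, compactly supported $\varphi$. For such $\varphi$ put $f = \abs{\varphi}_g$; then $\norm{f}_{L^4} = \norm{\varphi}_{L^4}$ and $\norm{f}_{L^2} = \norm{\varphi}_{L^2}$, while Kato's inequality $\abs{\grad \abs{\varphi}_g} \le \abs{\grad \varphi}_g$, valid for the Levi--Civita connection on $T^*\mathbb{H}^2(-a^2)$, gives $\norm{\grad f}_{L^2} \le \norm{\grad \varphi}_{L^2}$ and $f \in H^1_0$. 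Hence \eqref{Ladyzhenskaya} follows once we establish the scalar estimate
$$\norm{f}_{L^4(\mathbb{H}^2(-a^2))}^2 \le C_a \norm{f}_{L^2(\mathbb{H}^2(-a^2))}\bigl(\norm{f}_{L^2(\mathbb{H}^2(-a^2))} + \norm{\grad f}_{L^2(\mathbb{H}^2(-a^2))}\bigr)$$
for all $f \in H^1(\mathbb{H}^2(-a^2))$, which is equivalent, up to a universal constant, to $\norm{f}_{L^4}^4 \le C_a \norm{f}_{L^2}^2(\norm{f}_{L^2}^2 + \norm{\grad f}_{L^2}^2)$.

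For the scalar estimate I would use a uniformly locally finite cover adapted to the length scale $r_0 = 1/a$. Because $\mathbb{H}^2(-a^2)$ is homogeneous of constant curvature, there is a countable family $\{p_j\}$ such that the balls $\{B_{p_j}(r_0)\}$ cover $\mathbb{H}^2(-a^2)$, the doubled balls $\{B_{p_j}(2r_0)\}$ have overlap multiplicity at most some $N$, and geodesic normal coordinates identify each $B_{p_j}(2r_0)$ bi-Lipschitz with a Euclidean ball of radius $2r_0$, with $N$ and the bi-Lipschitz constants \emph{absolute} (they depend only on the dimensionless product $a r_0 = 1$); the parameter $a$ re-enters only through the scaling of that Euclidean ball, whose radius is comparable to $1/a$. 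Taking a subordinate partition of unity $0 \le \chi_j \le 1$, $\sum_j \chi_j \equiv 1$, $\supp \chi_j \subset B_{p_j}(r_0)$, we get $\norm{f}_{L^4(\mathbb{H}^2)}^4 = \int f^4 \sum_j \chi_j \le \sum_j \int_{B_{p_j}(r_0)} f^4$. On each ball the Euclidean Ladyzhenskaya inequality for bounded domains gives $\int_{B_{p_j}(r_0)} f^4 \le C_a \bigl(\int_{B_{p_j}(r_0)} f^2\bigr)\bigl(\int_{B_{p_j}(r_0)} f^2 + \abs{\grad f}^2\bigr)$; bounding the first factor by $\norm{f}_{L^2(\mathbb{H}^2)}^2$ and summing the remaining factors using the bounded overlap, $\sum_j \int_{B_{p_j}(r_0)}(f^2 + \abs{\grad f}^2) \le N(\norm{f}_{L^2}^2 + \norm{\grad f}_{L^2}^2)$, yields exactly $\norm{f}_{L^4}^4 \le C_a \norm{f}_{L^2}^2(\norm{f}_{L^2}^2 + \norm{\grad f}_{L^2}^2)$.

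The genuine obstacle is not any single estimate but getting the geometry right. First, one must resist passing to the Poincar\'e disk and invoking the flat Ladyzhenskaya inequality there: only the easy inclusion \eqref{easy_inclusion} holds, so the Euclidean Dirichlet energy of $\varphi^\sharp$ is \emph{not} controlled by $\norm{\grad\varphi}_{L^2(\mathbb{H}^2)}$ (a fixed hyperbolic bump pushed toward infinity has bounded hyperbolic energy but arbitrarily large Euclidean energy in the disk model), and that route is far too lossy for the $L^4$ norm. One is therefore forced to work intrinsically, via the bounded-geometry cover, where the fixed length scale $1/a$ must be threaded through every constant — this is automatic once the covering radius is pinned to $1/a$, thanks to homogeneity, but it is the only place where care is required. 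Finally, the lower-order term $\norm{\varphi}_{L^2}$ inside the braces of \eqref{Ladyzhenskaya}, in place of the pure $\norm{\grad\varphi}_{L^2}$ of the $\mathbb{R}^2$ Ladyzhenskaya inequality, is an unavoidable consequence of this local step: on a ball of finite radius the sharp interpolation inequality is inhomogeneous, and the inhomogeneity survives the summation over the cover.
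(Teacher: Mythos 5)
This lemma is not proved anywhere in the paper: it is imported verbatim from \cite{CC13} and used as a black box, so there is no internal argument to measure your proposal against; I can only judge it on its own terms, and on those terms it is correct. The reduction to a scalar inequality via $f=\abs{\varphi}_g$ and Kato's inequality is sound (near the zero set of $\varphi$ one should work with $f_\eps=(\abs{\varphi}_g^2+\eps)^{1/2}$ and let $\eps\to 0$, but this is routine), and the bounded-geometry covering argument --- geodesic balls of radius $1/a$, overlap multiplicity bounded by volume comparison, uniformly bi-Lipschitz normal coordinates since $\sinh(ar)/(ar)$ stays in a fixed interval for $r\le 2/a$, the inhomogeneous Euclidean Gagliardo--Nirenberg inequality on each chart, and summation with bounded overlap --- is a standard and valid way to obtain the scalar estimate with every constant depending only on $a$. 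Your diagnosis of why the Poincar\'e-disk shortcut fails is also accurate: only the one-sided comparison \eqref{easy_inclusion} holds, and the Euclidean Dirichlet energy of $\varphi^\sharp$ near $\partial D_0(1)$ is not controlled by the hyperbolic one. Two remarks. First, you do not need the claim $H^1_0(\H)=H^1(\H)$ for $1$-forms (a slightly delicate assertion for tensors): the lemma is stated for $\varphi\in H^1_0$, which by definition is the closure of compactly supported smooth forms, so proving the estimate for such $\varphi$ and passing to the limit suffices. Second, there is a shorter route to the scalar inequality that bypasses the covering entirely: apply the $L^1$-Sobolev embedding $\norm{g}_{L^2(\H)}\le C_a\big(\norm{\dd g}_{L^1(\H)}+\norm{g}_{L^1(\H)}\big)$ (which on $\H$ follows from the isoperimetric inequality, even without the lower-order term) to $g=f^2$; this gives $\norm{f}_{L^4}^2\le C_a\norm{f}_{L^2}\big(\norm{f}_{L^2}+2\norm{\grad f}_{L^2}\big)$, i.e.\ exactly \eqref{Ladyzhenskaya}. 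That is Nirenberg's original argument transplanted to the hyperbolic plane, and it makes transparent why the lower-order $\norm{\varphi}_{L^2}$ term appears in the braces.
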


\begin{lemm}\cite{CC15}
 Let $\varphi$ be a $1-$form  in $H^1_0(\mathbb{H}^2(-a^2))$, then
\begin{equation}\label{PoincareType}
\big \| \varphi \big \|_{L^2(\mathbb{H}^2(-a^2))} \leq \frac{1}{a} \big \| \nabla \varphi \big \|_{L^2(\mathbb{H}^2(-a^2))} .
\end{equation}
\end{lemm}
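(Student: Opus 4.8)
The plan is to reduce the inequality to smooth, compactly supported $1$-forms by density, and then to read it off from the Weitzenböck (Bochner) identity for $1$-forms together with the curvature fact $\Ric = -a^2 g$ that is already built into \eqref{viscop}. Since $H^1_0(\mathbb{H}^2(-a^2))$ is by definition the closure of the space $\Lambda_c^1(\mathbb{H}^2(-a^2))$ of smooth compactly supported $1$-forms in the $H^1$ norm, and since both sides of \eqref{PoincareType} are continuous with respect to that norm, it is enough to prove the estimate for $\varphi \in \Lambda_c^1(\mathbb{H}^2(-a^2))$; the general case then follows by passing to the limit along an approximating sequence.

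So first I would fix $\varphi \in \Lambda_c^1(\mathbb{H}^2(-a^2))$ and invoke the Weitzenböck formula for $1$-forms,
\[
-\Delta \varphi \;=\; \nabla^\ast\nabla\varphi + \Ric(\varphi),
\]
where, exactly as in \eqref{viscop}, $-\Delta = \dd\dd^\ast + \dd^\ast\dd$ denotes the nonnegative Hodge Laplacian and $\nabla^\ast\nabla$ the rough (connection) Laplacian. On $\mathbb{H}^2(-a^2)$ the Ricci tensor acts on $1$-forms as multiplication by $-a^2$ — this is precisely the identity $-2\Ric v = 2a^2 v$ used in \eqref{viscop} — so
\[
-\Delta \varphi \;=\; \nabla^\ast\nabla\varphi - a^2\varphi .
\]
Pairing with $\varphi$ in $L^2(\mathbb{H}^2(-a^2))$ and integrating by parts, which produces no boundary terms because $\varphi$ is compactly supported, yields
\[
\big\|\dd\varphi\big\|_{L^2(\mathbb{H}^2(-a^2))}^2 + \big\|\dd^\ast\varphi\big\|_{L^2(\mathbb{H}^2(-a^2))}^2 \;=\; \big\|\nabla\varphi\big\|_{L^2(\mathbb{H}^2(-a^2))}^2 - a^2\big\|\varphi\big\|_{L^2(\mathbb{H}^2(-a^2))}^2 .
\]

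Since the left-hand side is nonnegative, $a^2\|\varphi\|_{L^2}^2 \le \|\nabla\varphi\|_{L^2}^2$, and taking square roots gives exactly \eqref{PoincareType} for $\varphi \in \Lambda_c^1$; the density argument of the first paragraph then extends it to all of $H^1_0(\mathbb{H}^2(-a^2))$. The only delicate point is keeping signs and normalizations straight among the Hodge Laplacian, the rough Laplacian, and the Ricci term, but these are already pinned down in the paper by \eqref{viscop}, and the no-boundary-term integration by parts is routine for compactly supported forms; so I do not anticipate a genuine obstacle. As a sanity check, the constant $1/a$ is sharp — it reflects that $a^2$ is the bottom of the $L^2$ spectrum of $\nabla^\ast\nabla$ on $1$-forms over $\mathbb{H}^2(-a^2)$ — although sharpness is not needed for the applications here.
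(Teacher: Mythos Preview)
Your argument is correct: the Weitzenb\"ock identity $\dd\dd^\ast+\dd^\ast\dd=\nabla^\ast\nabla+\Ric$ together with $\Ric=-a^2$ on $\mathbb{H}^2(-a^2)$ gives, after pairing with a compactly supported $\varphi$, the identity $\|\dd\varphi\|_{L^2}^2+\|\dd^\ast\varphi\|_{L^2}^2=\|\nabla\varphi\|_{L^2}^2-a^2\|\varphi\|_{L^2}^2$, from which \eqref{PoincareType} follows by nonnegativity of the left side and then by density.

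As for comparison: the paper does not supply its own proof of this lemma --- it is quoted directly from \cite{CC15} and used as a black box --- so there is no in-paper argument to compare against. Your Bochner/Weitzenb\"ock route is the standard one-line derivation and is almost certainly what the cited reference does as well; in any case it is self-contained and correct.
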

So, through combining \eqref{Ladyzhenskaya} with \eqref{PoincareType}, we obtain  
\begin{equation}\label{reallyEASY}
\big \| \varphi \big \|_{L^4(\mathbb{H}^2(-a^2))} \leq C_a \big \| \nabla \varphi \big \|_{L^2(\mathbb{H}^2(-a^2))} .
\end{equation}
Let
\begin{equation}\label{finiteregion}
\Omega (r_1 , r_2) = \big \{ x \in \mathbb{H}^2(-a^2) : r_1 < \rho (x) < r_2  \big \} ,
\end{equation}
where $0 < r_1 < r_2$.

In what follows, the absolute constants $C_a$, and $C(a, R_0)$ may change from line to line.   

Recall that $w$ is the element which is specified in Lemma \ref{KeyLEMMA}.  By \eqref{reallyEASY} and \eqref{H1estimateforW} 
\begin{equation}\label{HelpfulExtra1}
\big \| w \big \|_{L^4(\mathbb{H}^2(-a^2))} \leq C(a, R_0) \big \| \dd F \big \|_{L^2(\mathbb{H}^2(-a^2))} .
\end{equation}

Also, \eqref{reallyEASY}, \eqref{estimateharmonic} imply

\begin{equation}\label{helpfuldF}
\begin{split}
\big \| \dd F \big \|_{L^4(\mathbb{H}^2(-a^2))} & \leq  C_a \big \| \nabla \dd F \big \|_{L^2(\mathbb{H}^2(-a^2))} \\
& \leq C_a \big \| \dd F \big \|_{L^2(\mathbb{H}^2(-a^2))}.
\end{split}
\end{equation}
For the term $\Psi$ as specified in \eqref{PsiandPhi}, we have

\begin{equation}\label{weired2}
\begin{split}
\big \| \Psi \big \|_{L^4(\mathbb{H}^2(-a^2))} & \leq  \big \| \dd F \big \|_{L^4(\mathbb{H}^2(-a^2))} +
\big \| w \big \|_{L^4(\mathbb{H}^2(-a^2))} \\
& \leq C_a \big \|  \dd F \big \|_{L^2(\mathbb{H}^2(-a^2))} + \big \| w \big \|_{L^4(\mathbb{H}^2(-a^2))} \\
& \leq  C(a,R_0) \big \| \dd F \big \|_{L^2(\mathbb{H}^2(-a^2))} ,
\end{split}
\end{equation}
where \eqref{helpfuldF} is used in the second line and \eqref{HelpfulExtra1} is used in the third line.
Observe 
\begin{equation*}
\big \| \dd  \eta_{R_0} \big \|_{L^{\infty} (\mathbb{H}^2(-a^2))} \leq \frac{1}{2R_0} \big \| \eta'\big \|_{L^{\infty} ([0,\infty ))} ,
\end{equation*}
and 
\begin{equation}
\nabla \Psi = \dd \eta_{R_0} \otimes \dd F + (\eta_{R_0}-1) \nabla \dd F + \nabla w .
\end{equation}
So, through applying \eqref{estimateharmonic} and \eqref{H1estimateforW} we have
\begin{equation}\label{Weired1}
\begin{split}
& \big \| \nabla \Psi \big \|_{L^2(\mathbb{H}^2(-a^2))}\\
\leq &\frac{1}{2R_0} \big \| \eta'\big \|_{L^{\infty} ([0,\infty ))} \big \| \dd F \big \|_{L^2(\mathbb{H}^2(-a^2))} + \big \| \nabla \dd F \big \|_{L^2(\mathbb{H}^2(-a^2))}
+ \big \| \nabla w \big \|_{L^2(\mathbb{H}^2(-a^2))} \\
\leq & C(a ,R_0 ) \big \| \dd F \big \|_{L^2(\mathbb{H}^2(-a^2))}.
\end{split}
\end{equation}
Then by \eqref{H1estimateforW}, \eqref{HelpfulExtra1}, \eqref{weired2}, \eqref{Weired1} 
\begin{equation}\label{EntireONE}
\begin{split}
& \big \| w \big \|_{L^4(\mathbb{H}^2(-a^2))} + \big \| \nabla  w \big \|_{L^2(\mathbb{H}^2(-a^2))}
+ \big \|\Psi \big \|_{L^4(\mathbb{H}^2(-a^2))} + \big \| \nabla \Psi \big \|_{L^2(\mathbb{H}^2(-a^2))} \\
&\quad\leq  C(a, R_0 ) \big \| \dd F \big \|_{L^2(\mathbb{H}^2(-a^2))} .
\end{split}
\end{equation}

Of course, \eqref{PoincareType} and \eqref{EntireONE} together give
\begin{equation}\label{entireTWO}
\big \| w  \big \|_{L^2(\mathbb{H}^2(-a^2))}  +   \big \| \Psi \big \|_{L^2(\mathbb{H}^2(-a^2))}
\leq  C(a, R_0 ) \big \| \dd F \big \|_{L^2(\mathbb{H}^2(-a^2))} .
\end{equation}

\subsection{The set up of approximated solutions on larger and larger bounded subregions $\Omega (R_0 , R)$ of $\Omega (R_0)$. }

Now, let $R$  be any positive number which satisfies $R > 5R_0$. We introduce the following function space
\begin{equation}\label{generalV}
\textbf{V} \big ( \Omega (R_0 , R )  \big ) = \overline{\Lambda_{c, \sigma }\big ( \Omega (R_0 , R ) \big )}^{ \|\cdot  \|_{H^1}} ,
\end{equation}
where
\[
\Omega (R_0 , R)  = \big \{ x \in \mathbb{H}^2(-a^2) : R_0 < \rho (x) < R     \big \} .
\]
Note that each element in $\tV \big ( \Omega (R_0 , R )  \big )$ can be thought of as an element in $\tV\big (\Omega (R_0) \big )$. 
 
Following \cite{Seregin},  we look for an element $w_R \in \textbf{V} \big ( \Omega (R_0 , R )  \big )$ which solves \eqref{TureNSSecond} on $\Omega (R_0 , R )$.  More precisely we look for $w_R$ such that
\begin{equation}\label{approxNSONE}
\begin{split}
  -\triangle   w_R + 2a^2 w_R + \nabla_{w_R} w_R + \nabla_{w_R} \Psi + \nabla_{\Psi} w_R + \dd P_R & = \Phi ,\\
 \dd^* w_R & = 0 ,\\
 w_R \big |_{\partial \Omega (R_0 , R )} & = 0,
\end{split}
\end{equation}
and we say that   $w_R \in \tV \big ( \Omega (R_0 , R )  \big )$ is a solution  to  \eqref{approxNSONE}  if the following holds for any test $1$-form
$\varphi \in \textbf{V} \big ( \Omega (R_0 , R )  \big )$.

\begin{equation}\label{approxWEAK}
\begin{split}
& \int_{\Omega (R_0 , R)} g (\dd w_R , \dd \varphi ) \Vol_{\mathbb{H}^2(-a^2)} + 2a^2 \int_{\Omega (R_0 , R)}
g(w_R, \varphi ) \Vol_{\mathbb{H}^2(-a^2)} \\
&= \Big < \Phi , \varphi \Big >_{\textbf{V}'  ( \Omega (R_0 , R )  )\otimes \textbf{V} ( \Omega (R_0 , R )  )}
- \int_{\Omega (R_0 , R)} g ( \nabla_{w_R} \Psi  , \varphi  )\Vol_{\mathbb{H}^2(-a^2)} \\
& \quad- \int_{\Omega (R_0 , R )} g (\nabla_{\Psi} w_R , \varphi ) \Vol_{\mathbb{H}^2(-a^2)}
-\int_{\Omega (R_0 , R )}  g (\nabla_{w_R} w_R , \varphi ) \Vol_{\mathbb{H}^2(-a^2)}.
\end{split}
\end{equation}
Before we can address the existence of ${w_R}$, which satisfies the weak formulation \eqref{approxWEAK}, we need to establish an a priori estimate for $\|\nabla {w_R} \|_{L^2(\Omega (R_0))}$, under some suitable constraint imposed on $\dd F \in \mathbb{F}$.

\subsection{Uniform a priori estimate for the $H^1$ norm of $w_R$.}\label{subsection3.4}
Now, we need to see how to get a uniform estimate of the following quantity in a manner which is independent of the parameter $R$.
\begin{equation*}
\big \| \nabla w_R \big \|_{L^2(\Omega (R_0 , R ))}^2 = \big \| \dd w_R \big \|_{L^2(\Omega (R_0 , R))}^2 + a^2
\big \| w_R \big \|_{L^2( \Omega (R_0 , R ) )}^2 .
\end{equation*}
The first step  is to set $\varphi$ to be just $w_R$ itself in  \eqref{approxWEAK}. This leads to
\begin{equation}\label{substituteWR}
\begin{split}
& \big \| \nabla w_R \big \|_{L^2(\Omega ( R_0 , R ))}^2 + a^2 \big \| w_R \big \|_{L^2(\Omega (R_0 , R))}^2 \\
&=  \big \| \dd w_R \big \|_{L^2(\Omega ( R_0 , R ))}^2 + 2a^2 \big \| w_R \big \|_{L^2(\Omega (R_0 , R))}^2 \\
&=  \Big < \Phi , w_R \Big >_{\textbf{V}'  ( \Omega (R_0 , R )  )\otimes \textbf{V} ( \Omega (R_0 , R )  )}
- \int_{\Omega (R_0 , R)} g ( \nabla_{w_R} \Psi  , w_R  )\Vol_{\mathbb{H}^2(-a^2)} \\
 &\quad - \int_{\Omega (R_0 , R )} g (\nabla_{\Psi} w_R , w_R ) \Vol_{\mathbb{H}^2(-a^2)}
-\int_{\Omega (R_0 , R )}  g (\nabla_{w_R} w_R , w_R ) \Vol_{\mathbb{H}^2(-a^2)}.
\end{split}
\end{equation}

Now, because
\begin{equation}\label{NatInclusion}
\textbf{V} \big ( \Omega (R_0 , R )  \big ) \subset \textbf{V} \big (  \Omega  (R_0)    \big ) \subset \textbf{V} \big ( \mathbb{H}^2(-a^2) \big ) ,
\end{equation}
we have \cite[Lemma 4.2]{CC13}
\begin{equation}\label{nonlinear}
\int_{\Omega (R_0 , R )}  g (\nabla_{w_R} w_R , w_R ) \Vol_{\mathbb{H}^2(-a^2)} = 0.
\end{equation}
Also, observe that $\dd^* \Psi = 0$ holds on $\mathbb{H}^2(-a^2)$. So, we have
\begin{equation}\label{suprise}
\Psi \in \big \{  \varphi \in H^1_0(\mathbb{H}^2(-a^2)) : \dd^* \varphi = 0 \big \}.
\end{equation}
So again by \cite[Lemma 4.2]{CC13} we get
\begin{equation}\label{surprise2}
\int_{\Omega (R_0 , R )} g (\nabla_{\Psi} w_R , w_R ) \Vol_{\mathbb{H}^2(-a^2)} = 0 .
\end{equation}
Then   \eqref{nonlinear} and \eqref{surprise2} reduce \eqref{substituteWR} to
\begin{equation}\label{genuineidentity}
\begin{split}
& \big \| \nabla w_R \big \|_{L^2(\Omega ( R_0 , R ))}^2 + a^2 \big \| w_R \big \|_{L^2(\Omega (R_0 , R))}^2 \\
 &= \Big < \Phi , w_R \Big >_{\textbf{V}'  ( \Omega (R_0 , R )  )\otimes \textbf{V} ( \Omega (R_0 , R )  )}
- \int_{\Omega (R_0 , R)} g ( \nabla_{w_R} \Psi  , w_R  )\Vol_{\mathbb{H}^2(-a^2)} .
\end{split}
\end{equation}
 
Next, we estimate
\begin{equation}\label{TEDIOUS1}
\begin{split}
& \Big < \Phi , \varphi \Big >_{\textbf{V}'(\mathbb{H}^2(-a^2)) \otimes \textbf{V} (\mathbb{H}^2(-a^2)) } \\
& = \Big < \triangle  \Psi , \varphi \Big >_{\textbf{V}'(\mathbb{H}^2(-a^2)) \otimes \textbf{V} (\mathbb{H}^2(-a^2)) }
-2a^2 \int_{\mathbb{H}^2(-a^2)} g (\Psi , \varphi ) \Vol_{\mathbb{H}^2(-a^2)} \\
&\quad - \int_{\mathbb{H}^2(-a^2)} g (\nabla_{\Psi} \Psi , \varphi ) \Vol_{\mathbb{H}^2(-a^2)}\\
 &= - \int_{\mathbb{H}^2(-a^2)} g ( \dd \Psi , \dd \varphi ) \Vol_{\mathbb{H}^2(-a^2)}
-2a^2 \int_{\mathbb{H}^2(-a^2)} g (\Psi , \varphi ) \Vol_{\mathbb{H}^2(-a^2)}\\
 &\quad- \int_{\mathbb{H}^2(-a^2)} g (\nabla_{\Psi} \Psi , \varphi ) \Vol_{\mathbb{H}^2(-a^2)} .
\end{split}
\end{equation}

Through applying \eqref{EntireONE}, we get
\begin{equation}\label{EXTRA1}
\begin{split}
\Big | \int_{\mathbb{H}^2(-a^2)} g (\dd \Psi , \dd \varphi ) \Vol_{\mathbb{H}^2(-a^2)} \Big | & \leq C(a, R_0)
\big \| \dd F \big \|_{L^2(\mathbb{H}^2(-a^2))}  \big \| \nabla \varphi \big \|_{L^2(\mathbb{H}^2(-a^2))}. 
\end{split}
\end{equation}

Of course, \eqref{entireTWO} gives

\begin{equation}\label{Extra2}
\Big | \int_{\mathbb{H}^2(-a^2)} g (\Psi , \varphi ) \Vol_{\mathbb{H}^2(-a^2)} \Big | \leq C(a, R_0)
\big \| \dd F \big \|_{L^2(\mathbb{H}^2(-a^2))}  \big \| \nabla \varphi \big \|_{L^2(\mathbb{H}^2(-a^2))}.
\end{equation}

Again, through using \eqref{EntireONE} and then \eqref{reallyEASY} successively, we get

\begin{equation}\label{Extra3}
\begin{split}
\Big | \int_{\mathbb{H}^2(-a^2)} g (\nabla_{\Psi} \Psi , \varphi ) \Vol_{\mathbb{H}^2(-a^2)} \Big |
& \leq \big \| \Psi \big \|_{L^4(\mathbb{H}^2(-a^2))}\big \| \nabla  \Psi \big \|_{L^2(\mathbb{H}^2(-a^2))}
\big \| \varphi \big \|_{L^4(\mathbb{H}^2(-a^2))} \\
& \leq C(a, R_0) \big \| \dd F \big \|_{L^2(\mathbb{H}^2(-a^2))}^2       \big \| \nabla \varphi \big \|_{L^2(\mathbb{H}^2(-a^2))} .
\end{split}
\end{equation}

So, through combining \eqref{TEDIOUS1} with \eqref{EXTRA1}, \eqref{Extra2}, and \eqref{Extra3}, we get
\begin{equation}\label{Phidualest1}
\big \| \Phi \big \|_{\textbf{V}'(\mathbb{H}^2(-a^2))} \leq C(a, R_0) \Big \{ \big \| \dd F \big \|_{L^2(\mathbb{H}^2(-a^2))} + \big \| \dd F \big \|_{L^2(\mathbb{H}^2(-a^2))}^2     \Big \}
\end{equation}
The inclusion  \eqref{NatInclusion} tells us that
\begin{equation}\label{InclusionRev}
\textbf{V}' \big ( \mathbb{H}^2(-a^2) \big ) \subset \textbf{V}' \big ( \Omega (R_0) \big ) \subset
\textbf{V}' \big ( \Omega (R_0 , R ) \big ).
\end{equation}
Hence
\begin{equation*}
  \big \| \Phi \big \|_{\textbf{V}' (\Omega (R_0 , R ))}  \leq \big \| \Phi \big \|_{\textbf{V}'(\Omega (R_0))} \leq    \big \| \Phi \big \|_{\textbf{V}' (\mathbb{H}^2(-a^2))},
\end{equation*}
and
\begin{equation}\label{PhiDualEST2}
\big \| \Phi  \big \|_{\textbf{V}' ( \Omega (R_0 , R)  )} \leq C(a, R_0) \Big \{ \big \| \dd F \big \|_{L^2(\mathbb{H}^2(-a^2))} + \big \| \dd F \big \|_{L^2(\mathbb{H}^2(-a^2))}^2     \Big \}.
\end{equation}
By applying \eqref{EntireONE} to $\nabla \Psi$ and then \eqref{reallyEASY} to $w_R$, we can estimate the second term on the right hand side of \eqref{genuineidentity} as follows.
\begin{equation}\label{Ninety}
\begin{split}
 \Big |  \int_{\Omega (R_0 , R ) } g (  \nabla_{w_R} \Psi , w_R      )  \Vol_{\mathbb{H}^2(-a^2)}       \Big | 
\leq  C(a, R_0) \big \| \nabla w_R \big \|_{L^2( \Omega ( R_0 , R   )   )}^2 \big \| \dd F \big \|_{L^2(\mathbb{H}^2(-a^2))} .
\end{split}
\end{equation}
Then it follows
\begin{equation*}
\begin{split}
& \big \| \nabla w_R \big \|_{L^2(\Omega (R_0 , R ))}^2 + a^2 \big \| w_R \big \|_{L^2(\Omega (R_0 , R))}^2 \\
&\leq  \big \| \Phi \big \|_{\textbf{V}'(\Omega (R_0 , R))} \big \| \nabla w_R \big \|_{L^2(\Omega (R_0 , R))}
+  {C}(a, R_0) \big \| \nabla w_R \big \|_{L^2(\Omega (R_0 ,R))}^2 \big \| \dd F \big \|_{L^2(\mathbb{H}^2(-a^2))} \\
&\leq  \frac{1}{2}   \big \| \nabla w_R \big \|_{L^2(\Omega (R_0 , R ))}^2 + \frac{1}{2} \big \| \Phi \big \|_{\textbf{V}'(\Omega (R_0 , R))}^2
+  {C}(a, R_0) \big \| \nabla w_R \big \|_{L^2(\Omega (R_0 ,R))}^2 \big \| \dd F \big \|_{L^2(\mathbb{H}^2(-a^2))} \\
&\leq  \frac{1}{2}   \big \| \nabla w_R \big \|_{L^2(\Omega (R_0 , R ))}^2
+ \frac{1}{2}  \big ( {C}(a,R_0) \big )^2  \Big \{ \big \| \dd F \big \|_{L^2(\mathbb{H}^2(-a^2))} + \big \| \dd F \big \|_{L^2(\mathbb{H}^2(-a^2))}^2     \Big \}^2 \\
&\quad+  {C}(a, R_0) \big \| \nabla w_R \big \|_{L^2(\Omega (R_0 ,R))}^2 \big \| \dd F \big \|_{L^2(\mathbb{H}^2(-a^2))} ,
\end{split}
\end{equation*}
which gives
\begin{equation}\label{decision}
\begin{split}
& \Big \{ \frac{1}{2} -  {C}(a,R_0) \big \|  \dd F \big \|_{L^2(\mathbb{H}^2(-a^2))}   \Big \}  \big \| \nabla w_R \big \|_{L^2(\Omega (R_0 ,R))}^2 + a^2  \big \|  w_R \big \|_{L^2(\Omega (R_0 ,R))}^2 \\
\leq & \frac{1}{2} \big ( {C}(a,R_0) \big )^2  \Big \{ \big \| \dd F \big \|_{L^2(\mathbb{H}^2(-a^2))} + \big \| \dd F \big \|_{L^2(\mathbb{H}^2(-a^2))}^2     \Big \}^2.
\end{split}
\end{equation}

The a priori estimate \eqref{decision} tells us that we have to post the following constraint on the size of the $L^2$-norm of the harmonic $1$-form $\dd F$.

\begin{equation}\label{ConditionondF}
\big \| \dd F \big \|_{L^2(\mathbb{H}^2(-a^2))}  < \frac{1}{2 {C}(a ,R_0 )} .
\end{equation}
 
Consequently, suppose that $\dd F$ satisfies  \eqref{ConditionondF}, then the element $w_R$, which is a solution to the system \eqref{approxNSONE} on $\Omega (R_0 , R)$, must satisfy the following a priori estimate
\begin{equation*}
\big \| \nabla w_R \big \|_{L^2(\Omega (R_0 ,R))}^2 \leq
\frac{\big ( {C}(a,R_0) \big )^2 \Big \{ \big \| \dd F \big \|_{L^2(\mathbb{H}^2(-a^2))} + \big \| \dd F \big \|_{L^2(\mathbb{H}^2(-a^2))}^2     \Big \}^2  }{\Big ( 1- 2 {C}(a,R_0)  \|\dd F \|_{L^2(\mathbb{H}^2(-a^2))} \Big ) }
 .
\end{equation*}
 
We summarize our discussion in the following lemma.

\begin{lemm}\label{ImportantLemma}
There exists some positive absolute constant ${C} (a, R_0)$, which depends only on $a$ and $R_0$, such that for any arbitrary chosen positive number $R$ which satisfies $R > 5R_0$, and any arbitrary chosen harmonic function $F$ on $\mathbb{H}^2(-a^2)$ for which $\dd F \in \mathbb{F}$, the following implication holds for any possible element
$w_R \in \textbf{V} (\Omega (R_0 ,R ))$, which is a solution to the system \eqref{approxNSONE} on $\Omega (R_0 , R)$.\\

If  $\dd F$ satisfies 
\begin{equation}\label{ConditionondFTWO}
\big \| \dd F \big \|_{L^2(\mathbb{H}^2(-a^2))}  < \frac{1}{2 {C}(a ,R_0 )} ,
\end{equation}
then, it follows that $w_R$ must satisfy the following a priori estimate.
\begin{equation}\label{GOODESTIMATE}
\big \| \nabla w_R \big \|_{L^2(\Omega (R_0 ,R))}^2 \leq
\frac{\big ( {C}(a,R_0) \big )^2 \Big \{ \big \| \dd F \big \|_{L^2(\mathbb{H}^2(-a^2))} + \big \| \dd F \big \|_{L^2(\mathbb{H}^2(-a^2))}^2     \Big \}^2  }{\Big ( 1- 2 {C}(a,R_0)  \|\dd F \|_{L^2(\mathbb{H}^2(-a^2))} \Big ) } .
\end{equation}
\end{lemm}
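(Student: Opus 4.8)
The plan is the standard energy argument in the spirit of Leray (as followed in \cite{Seregin}): test the weak formulation \eqref{approxWEAK} of the truncated system against the solution $w_R$ itself, exploit the antisymmetry of the convection terms to kill two of them, estimate the remaining forcing and convection contributions by the a priori bounds on $\Psi$ and $w$ established earlier, and close with Young's inequality after imposing a smallness hypothesis on $\|\dd F\|_{L^2(\mathbb{H}^2(-a^2))}$ that keeps the coefficient of $\|\nabla w_R\|_{L^2}^2$ strictly positive. Throughout, one must verify that every constant is independent of the truncation radius $R$.

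First I would set $\varphi = w_R$ in \eqref{approxWEAK}. The left-hand side becomes $\|\dd w_R\|_{L^2(\Omega(R_0,R))}^2 + 2a^2\|w_R\|_{L^2(\Omega(R_0,R))}^2$, which equals $\|\nabla w_R\|_{L^2(\Omega(R_0,R))}^2 + a^2\|w_R\|_{L^2(\Omega(R_0,R))}^2$ since $\|\nabla\varphi\|_{L^2}^2 = \|\dd\varphi\|_{L^2}^2 + a^2\|\varphi\|_{L^2}^2$ for divergence-free $\varphi$. On the right-hand side, the terms $\int g(\nabla_{w_R}w_R, w_R)$ and $\int g(\nabla_\Psi w_R, w_R)$ vanish: the first because $w_R \in \textbf{V}(\Omega(R_0,R))$ is divergence free, the second because $\Psi$ is also divergence free in $H^1_0(\mathbb{H}^2(-a^2))$ and $w_R$ lies in $\textbf{V}$; both are instances of \cite[Lemma 4.2]{CC13}, applied through the inclusions \eqref{NatInclusion}. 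This cancellation is the crux of the argument, so it is where I would be most careful — but since it rests only on divergence-freeness and density of compactly supported divergence-free forms, it is entirely $R$-independent. What remains is \eqref{genuineidentity}, with only $\langle \Phi, w_R\rangle$ and $\int g(\nabla_{w_R}\Psi, w_R)$ surviving.

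Next I would bound the two survivors. For $\Phi = \triangle\Psi - 2a^2\Psi - \nabla_\Psi\Psi$, pairing with a test form $\varphi$ and integrating the Laplacian by parts produces $-\int g(\dd\Psi,\dd\varphi) - 2a^2\int g(\Psi,\varphi) - \int g(\nabla_\Psi\Psi,\varphi)$; estimating these by $\|\nabla\Psi\|_{L^2}\|\nabla\varphi\|_{L^2}$, $\|\Psi\|_{L^2}\|\varphi\|_{L^2}$, and $\|\Psi\|_{L^4}\|\nabla\Psi\|_{L^2}\|\varphi\|_{L^4}$ respectively, then invoking the a priori bounds \eqref{EntireONE}, \eqref{entireTWO} on $\Psi$ and the Ladyzhenskaya/Poincar\'e consequence \eqref{reallyEASY} on $\varphi$, yields $\|\Phi\|_{\textbf{V}'(\mathbb{H}^2(-a^2))} \le C(a,R_0)\{\|\dd F\|_{L^2} + \|\dd F\|_{L^2}^2\}$, which descends to $\textbf{V}'(\Omega(R_0,R))$ with the same constant by \eqref{NatInclusion}. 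For the convection term I would use $|\int g(\nabla_{w_R}\Psi, w_R)| \le \|\nabla w_R\|_{L^2}\|\Psi\|_{L^4}\|w_R\|_{L^4} \le C(a,R_0)\|\dd F\|_{L^2}\|\nabla w_R\|_{L^2(\Omega(R_0,R))}^2$, once more via \eqref{reallyEASY} on $w_R$ and \eqref{EntireONE} on $\Psi$. Since $\Psi$, $w$, and the functional inequalities \eqref{Ladyzhenskaya}, \eqref{PoincareType}, \eqref{reallyEASY} are all taken on the full hyperbolic plane, none of these constants sees $R$.

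Finally, plugging these into \eqref{genuineidentity} and applying Young's inequality to split $\|\Phi\|_{\textbf{V}'}\|\nabla w_R\|_{L^2} \le \tfrac12\|\nabla w_R\|_{L^2}^2 + \tfrac12\|\Phi\|_{\textbf{V}'}^2$, I would absorb one half of $\|\nabla w_R\|_{L^2}^2$ into the left-hand side and collect the remaining coefficients to reach exactly \eqref{decision}, i.e. $\{\tfrac12 - C(a,R_0)\|\dd F\|_{L^2}\}\|\nabla w_R\|_{L^2(\Omega(R_0,R))}^2 + a^2\|w_R\|_{L^2(\Omega(R_0,R))}^2 \le \tfrac12(C(a,R_0))^2\{\|\dd F\|_{L^2} + \|\dd F\|_{L^2}^2\}^2$. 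Under the smallness hypothesis \eqref{ConditionondFTWO}, namely $\|\dd F\|_{L^2} < \tfrac{1}{2C(a,R_0)}$, the factor $\tfrac12 - C(a,R_0)\|\dd F\|_{L^2}$ is strictly positive, so discarding the nonnegative $a^2\|w_R\|_{L^2}^2$ term and dividing yields \eqref{GOODESTIMATE}. The one genuinely delicate point is the pair of facts that make the estimate uniform: the vanishing of the two convection integrals and the $R$-independence of all constants, both of which are secured by working with norms on all of $\mathbb{H}^2(-a^2)$ and the density/antisymmetry lemma of \cite{CC13}.
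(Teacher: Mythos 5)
Your proposal is correct and follows essentially the same route as the paper: test \eqref{approxWEAK} with $\varphi=w_R$, kill the two trilinear terms $\int g(\nabla_{w_R}w_R,w_R)$ and $\int g(\nabla_\Psi w_R,w_R)$ via \cite[Lemma 4.2]{CC13}, bound $\|\Phi\|_{\textbf{V}'}$ by $C(a,R_0)\{\|\dd F\|_{L^2}+\|\dd F\|_{L^2}^2\}$ and the surviving convection term by $C(a,R_0)\|\dd F\|_{L^2}\|\nabla w_R\|_{L^2}^2$, then close with Young's inequality under the smallness condition \eqref{ConditionondFTWO}. The only cosmetic difference is that you estimate $\int g(\nabla_{w_R}\Psi,w_R)$ in its integrated-by-parts form $\|\Psi\|_{L^4}\|w_R\|_{L^4}\|\nabla w_R\|_{L^2}$ while the paper uses $\|\nabla\Psi\|_{L^2}\|w_R\|_{L^4}^2$ directly; both yield the same $R$-independent constant.
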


\subsection{The existence of  $w_R$ under  \eqref{ConditionondFTWO} imposed on $\dd F$.}

The purpose of this section is just to state the following result.

\begin{lemm}\label{ReallyStandardresult}
Let ${C}(a,R_0) >0 $  be the same absolute constant as specified in Lemma \ref{ImportantLemma}. Then, it follows that, no matter which $R > 5 R_0$ we use, the weak formulation \eqref{approxWEAK} admits at least one solution $w_R \in \textbf{V} (\Omega (R_0 , R))$, provided that $\dd F \in \mathbb{F}$ satisfies the constraint \eqref{ConditionondFTWO}. Moreover, such  $w_R$ satisfies the a priori estimate \eqref{GOODESTIMATE}.
\end{lemm}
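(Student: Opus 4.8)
The plan is to build $w_R$ by the classical Galerkin method on the \emph{bounded} domain $\Omega(R_0,R)$, exactly along the lines of \cite{Seregin} (or \cite[Ch.~IX]{Galdi}); this is possible precisely because $\Omega(R_0,R)$ has smooth boundary and, via the conformal map $Y$ of Section~\ref{notation}, the entire problem transports to a standard Euclidean problem on a bounded annulus. First I would fix a countable family $\{\psi_k\}_{k\ge 1}\subset\Lambda^1_{c,\sigma}(\Omega(R_0,R))$ whose linear span is dense in $\textbf{V}(\Omega(R_0,R))$, and for each $m$ look for $w_R^m=\sum_{k=1}^m c_k^m\psi_k$ satisfying the weak formulation \eqref{approxWEAK} with $\varphi$ ranging over $\psi_1,\dots,\psi_m$. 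Written out, this is a system of $m$ quadratic equations for the vector $c^m=(c_1^m,\dots,c_m^m)\in\mathbb{R}^m$, and it defines a continuous map $P_m\colon\mathbb{R}^m\to\mathbb{R}^m$ whose zeros are exactly the Galerkin solutions.

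Second, I would produce a zero of $P_m$ via the usual corollary of Brouwer's fixed point theorem: it suffices to exhibit $\rho>0$ with $P_m(c)\cdot c\ge 0$ whenever $|c|=\rho$. But $P_m(c)\cdot c$ is nothing other than the identity obtained by testing the $m$-dimensional system against $w_R^m$ itself, which is the computation already performed in Section~\ref{subsection3.4}: since $w_R^m\in\textbf{V}(\Omega(R_0,R))\subset\textbf{V}(\mathbb{H}^2(-a^2))$ and $\dd^*\Psi=0$, the cubic term and the $\nabla_\Psi w_R^m$ term vanish by \cite[Lemma~4.2]{CC13}, exactly as in \eqref{nonlinear} and \eqref{surprise2}, and under the smallness hypothesis \eqref{ConditionondFTWO} the remaining contributions are absorbed as before to yield the bound \eqref{GOODESTIMATE} with a right-hand side independent of $m$. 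This fixes the radius $\rho$ and gives a Galerkin solution $w_R^m$ for every $m$, each obeying \eqref{GOODESTIMATE}.

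Third, since $\Omega(R_0,R)$ is bounded, $\{w_R^m\}$ is bounded in $\textbf{V}(\Omega(R_0,R))$, so after passing to a subsequence it converges weakly in $H^1$ and, by Rellich--Kondrachov together with the two-dimensional embedding $H^1_0\hookrightarrow L^4$ on the bounded annulus $Y(\Omega(R_0,R))$, strongly in $L^2$ and $L^4$ to some $w_R\in\textbf{V}(\Omega(R_0,R))$. I then pass to the limit in \eqref{approxWEAK} against a fixed $\varphi=\psi_j$: the terms linear in $w_R^m$ converge by weak $H^1$ convergence, and for the nonlinear terms one first integrates by parts (using $\dd^*w_R^m=0$ and $\dd^*\Psi=0$) to shift the covariant derivative onto the smooth compactly supported $\varphi$, after which the strong $L^2$/$L^4$ convergence of $w_R^m$, and hence of $w_R^m\otimes w_R^m$ in $L^2$, finishes the job. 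A density argument extends the limiting identity to all $\varphi\in\textbf{V}(\Omega(R_0,R))$, so $w_R$ solves \eqref{approxWEAK}; finally \eqref{GOODESTIMATE} for $w_R$ follows from $\|\nabla w_R\|^2_{L^2}\le\liminf_m\|\nabla w_R^m\|^2_{L^2}$ by weak lower semicontinuity.

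The only genuinely delicate point is the passage to the limit in the nonlinear terms, and this is settled by the \emph{compactness} of the embedding $\textbf{V}(\Omega(R_0,R))\hookrightarrow L^4(\Omega(R_0,R))$, available because $\Omega(R_0,R)$ is bounded (equivalently, $Y(\Omega(R_0,R))$ is a bounded Euclidean annulus) --- which is exactly why the approximation is run on $\Omega(R_0,R)$ rather than directly on $\Omega(R_0)$, where no such compactness holds. Everything else is bookkeeping identical to the Euclidean exterior-domain construction in \cite{Seregin}.
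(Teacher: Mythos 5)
Your argument is correct, but it takes a genuinely different route from the paper. The paper (Section \ref{RoutineSubsection}) works directly in the infinite-dimensional space $\textbf{V}(\Omega(R_0,R))$: it defines the nonlinear map $\textbf{B}(\theta)=\textbf{T}_{Riesz}(\textbf{L}(\theta))$ via the Riesz isomorphism and applies the Leray--Schauder fixed point theorem, verifying (I) compactness of $\textbf{B}$ (using the same compact embedding $H^1\subset\subset L^4$ on the bounded region that you invoke, via an argument parallel to Section \ref{approx}) and (II) the uniform bound on solutions of $\theta_\lambda=\lambda\textbf{B}(\theta_\lambda)$, which is obtained by repeating the energy computation of Section \ref{subsection3.4} with the harmless extra factor $\lambda\in(0,1]$. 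You instead discretize first (Galerkin), solve each finite-dimensional quadratic system by the standard Brouwer corollary (zero of $P_m$ exists once $P_m(c)\cdot c\ge 0$ on a sphere, which is exactly the Section \ref{subsection3.4} energy identity restricted to the span of $\psi_1,\dots,\psi_m$), and then use Rellich--Kondrachov to pass to the limit in $m$. The two proofs rest on precisely the same two pillars --- the cancellation of the trilinear terms via \cite[Lemma 4.2]{CC13} under the smallness condition \eqref{ConditionondFTWO}, and the compactness of $\textbf{V}(\Omega(R_0,R))\hookrightarrow L^4$ on the bounded annulus --- but deploy the compactness at different stages: the paper uses it to make $\textbf{B}$ compact so that Leray--Schauder applies, while you use it to pass to the limit in the approximating sequence. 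Your route is arguably more self-contained (Brouwer plus weak compactness, no appeal to the Leray--Schauder theorem), at the cost of the extra bookkeeping of the Galerkin basis and the density/limit argument; the paper's route delegates all of that to the fixed point theorem and reuses its limit-passage machinery from Section \ref{approx} only once, to establish compactness of $\textbf{B}$. One small point you leave implicit: to convert $\abs{c}=\rho$ into a lower bound on $\|\nabla w_R^m\|_{L^2}$ you need the $\psi_k$ to be linearly independent (or orthonormalized in the $H^1_0$ inner product) so that the Euclidean norm of $c$ and the $H^1$ norm of $w_R^m$ are comparable on the finite-dimensional space; this is routine and does not affect the correctness of the argument.
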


The proof of Lemma \ref{ReallyStandardresult} follows from an application of the Leray-Schauder fixed point argument. Since the proof of Lemma \ref{ReallyStandardresult} uses   routine estimates which we will demonstrate in Section \ref{EASYsubsection}, we give the proof of Lemma \ref{ReallyStandardresult} later, in  Section \ref{RoutineSubsection}.

\subsection{Getting a solution to \eqref{TureNSSecond} on $\Omega (R_0)$ through passing to the limit on  $w_R$. }\label{approx}

Here, we take any nontrivial harmonic function $F$ on $\mathbb{H}^2(-a^2)$ such that $\dd F$ satisfies   \eqref{ConditionondFTWO}. With respect to such a harmonic form $\dd F \in \mathbb{F}$, we consider for each $R > 5R_0$, an element $w_R \in V\big (\Omega (R_0 , R) \big )$, which is a solution to \eqref{approxNSONE} on $\Omega (R_0 , R)$. Then, Lemma \ref{ImportantLemma} informs us that each $w_R$ must satisfy the apriori estimate \eqref{GOODESTIMATE}.
 
Now, \eqref{GOODESTIMATE} informs us that the family $\{w_R : R > 5R_0\}$ is bounded in the Hilbert space $\textbf{V}\big ( \Omega (R_0) \big )$. So, we can find a strictly increasing sequence
$\{ R_m \}_{m=1}^{\infty}$ of positive numbers in $(5R_0 , \infty )$ such that
\begin{equation}\label{limitofR}
\lim_{m \rightarrow \infty } R_m = + \infty ,
\end{equation}
and that the sequence $ \big \{ w_{R_m} \big \}_{m=1}^{\infty}$ converges weakly in the Hilbert space $\textbf{V}\big ( \Omega (R_0)\big )$ to some limiting element $\widetilde{w}$ in $\textbf{V} \big ( \Omega (R_0) \big )$. Hence, for any $\varphi \in \textbf{V} \big ( \Omega (R_0) \big )$, we have
\begin{equation}\label{Weakconvergence}
\lim_{m\rightarrow \infty} (( w_{R_m} , \varphi   ))_{H^1_0(\Omega (R_0))} = (( \widetilde{w}  , \varphi  ))_{H^1_0(\Omega (R_0))} .
\end{equation}
Note that the limiting element $\widetilde{w}$ satisfies \eqref{GOODESTIMATE} also.
The main task here is to show that $\widetilde{w}$ is a solution to \eqref{TureNSSecond} on $\Omega (R_0)$.
In other words, we have to prove that the following relation holds for any given test $1$-form
$\varphi \in \Lambda_{c,\sigma}^1 \big ( \Omega (R_0)\big ) $.

\begin{equation}\label{WEAKReally}
\begin{split}
& \int_{\Omega (R_0)} g (\dd \widetilde{w} , \dd \varphi ) \Vol_{\mathbb{H}^2(-a^2)} + 2a^2 \int_{\Omega (R_0)}
g(\widetilde{w}, \varphi ) \Vol_{\mathbb{H}^2(-a^2)} \\
= & \Big < \Phi , \varphi \Big >_{\textbf{V}'  ( \Omega (R_0)  )\otimes \textbf{V} ( \Omega (R_0)  )}
- \int_{\Omega (R_0)} g ( \nabla_{\widetilde{w}} \Psi  , \varphi  )\Vol_{\mathbb{H}^2(-a^2)} \\
& - \int_{\Omega (R_0)} g (\nabla_{\Psi} \widetilde{w} , \varphi ) \Vol_{\mathbb{H}^2(-a^2)}
-\int_{\Omega (R_0)}  g (\nabla_{\widetilde{w}} \widetilde{w} , \varphi ) \Vol_{\mathbb{H}^2(-a^2)}.
\end{split}
\end{equation}

We present the details for completeness, and also because similar arguments can be used later in Section \ref{RoutineSubsection}; we will omit the details then.

So, we now take a  fixed test $1$-form $\varphi \in \Lambda_{c, \sigma}^1 (\Omega (R_0))$. Since $\varphi$ is compactly supported in the open region $\Omega (R_0)$, we can choose a fixed, sufficiently large radius $\widetilde{R}$ which satisfies $\widetilde{R} > 5 R_0$ such that we have
\begin{equation}\label{supportofvarphi}
\varphi \in \Lambda_{c , \sigma}^1 (\Omega (R_0 , \widetilde{R})).
\end{equation}
Now, due to \eqref{limitofR}, we can find some sufficiently large positive integer $N \in \mathbb{Z}^+$ such that: whenever $m \geq N$, we have $R_m > \widetilde{R}$. Note that the following inclusion is valid for any $m \geq N$
\begin{equation}\label{InclusionTRIVIAL}
\Omega (R_0 , \widetilde{R}) \subset \Omega (R_0 , R_m).
\end{equation}
The validity of \eqref{InclusionTRIVIAL} for all $m \geq N$, together with \eqref{supportofvarphi}, tell us that our fixed choice of test $1$-form $\varphi$ must satisfy the following property for all $m \geq N$.
\begin{equation}\label{veryveryGood}
\varphi \in \textbf{V} (\Omega (R_0 , R_m )) .
\end{equation}
The validity of \eqref{veryveryGood} for all $m \geq N$ allows us to deduce directly from \eqref{approxWEAK} that our fixed choice of  the test $1$-form $\varphi$ satisfy the following identity for all $m \geq N$.
\begin{equation}\label{approxWEAKRm}
\begin{split}
& \int_{\Omega (R_0 , R_m)} g (\dd w_{R_m} , \dd \varphi ) \Vol_{\mathbb{H}^2(-a^2)} + 2a^2 \int_{\Omega (R_0 , R_m)}
g(w_{R_m}, \varphi ) \Vol_{\mathbb{H}^2(-a^2)} \\
= & \Big < \Phi , \varphi \Big >_{\textbf{V}'  ( \Omega (R_0 , R_m )  )\otimes \textbf{V} ( \Omega (R_0 , R_m )  )}
- \int_{\Omega (R_0 , R_m)} g ( \nabla_{w_{R_m}} \Psi  , \varphi  )\Vol_{\mathbb{H}^2(-a^2)} \\
& - \int_{\Omega (R_0 , R_m )} g (\nabla_{\Psi} w_{R_m} , \varphi ) \Vol_{\mathbb{H}^2(-a^2)}
-\int_{\Omega (R_0 , R_m )}  g (\nabla_{w_{R_m}} w_{R_m} , \varphi ) \Vol_{\mathbb{H}^2(-a^2)}.
\end{split}
\end{equation}
However, since the following relation holds for all $m \geq N$,
\begin{equation*}
\supp \varphi \subset \Omega (R_0 , \widetilde{R}) \subset \Omega (R_0 , R_m) ,
\end{equation*}
the following identities definitely hold for any $m \geq N$.
\begin{equation*}
\begin{split}
&\int_{\Omega (R_0 , R_m)} \Big \{  g (\dd w_{R_m} , \dd \varphi )  + 2a^2
g(w_{R_m}, \varphi )   \Big \}  \Vol_{\mathbb{H}^2(-a^2)}  = (( w_{R_m} , \varphi  ))_{H^1_0(\Omega (R_0))} ,\\
&\Big < \Phi , \varphi \Big >_{\textbf{V}'  ( \Omega (R_0 , R_m )  )\otimes \textbf{V} ( \Omega (R_0 , R_m )  )}  =
\Big < \Phi , \varphi \Big >_{\textbf{V}'  ( \Omega (R_0 )  )\otimes \textbf{V} ( \Omega (R_0 )  )} ,\\
& \int_{\Omega (R_0 , R_m)} g ( \nabla_{w_{R_m}} \Psi  , \varphi  )\Vol_{\mathbb{H}^2(-a^2)}
= \int_{\Omega (R_0 , \widetilde{R})} g ( \nabla_{w_{R_m}} \Psi  , \varphi  )\Vol_{\mathbb{H}^2(-a^2)},\\
&\int_{\Omega (R_0 , R_m )} g (\nabla_{\Psi} w_{R_m} , \varphi ) \Vol_{\mathbb{H}^2(-a^2)}
=\int_{\Omega (R_0 , \widetilde{R} )} g (\nabla_{\Psi} w_{R_m} , \varphi ) \Vol_{\mathbb{H}^2(-a^2)} ,\\
&\int_{\Omega (R_0 , R_m )}  g (\nabla_{w_{R_m}} w_{R_m} , \varphi ) \Vol_{\mathbb{H}^2(-a^2)}
= \int_{\Omega (R_0 , \widetilde{R} )}  g (\nabla_{w_{R_m}} w_{R_m} , \varphi ) \Vol_{\mathbb{H}^2(-a^2)}
\end{split}
\end{equation*}
Many thanks to the identities as displayed above, relation \eqref{approxWEAKRm} can now be rephrased in the following one, which of course holds for all $m \geq N$ also.
\begin{equation}\label{RelationForPasstoLimit}
\begin{split}
& (( w_{R_m}  , \varphi    ))_{H^1_0(\Omega (R_0))} \\
= & \Big < \Phi , \varphi \Big >_{\textbf{V}'  ( \Omega (R_0)  )\otimes \textbf{V} ( \Omega (R_0)  )}
- \int_{\Omega (R_0 ,\widetilde{R})} g ( \nabla_{w_{R_m}} \Psi  , \varphi  )\Vol_{\mathbb{H}^2(-a^2)} \\
& - \int_{\Omega (R_0 , \widetilde{R} )} g (\nabla_{\Psi} w_{R_m} , \varphi ) \Vol_{\mathbb{H}^2(-a^2)}
-\int_{\Omega (R_0 , \widetilde{R} )}  g (\nabla_{w_{R_m}} w_{R_m} , \varphi ) \Vol_{\mathbb{H}^2(-a^2)}.
\end{split}
\end{equation}
The fact that $\Psi$ satisfies \eqref{suprise} allows us to use  \cite[Lemma 4.2]{CC13} to deduce that

\begin{equation}\label{useMaster1}
\begin{split}
- \int_{\Omega (R_0 , \widetilde{R} )} g (\nabla_{\Psi} w_{R_m} , \varphi ) \Vol_{\mathbb{H}^2(-a^2)}
& = - \int_{\mathbb{H}^2(-a^2)}  g (\nabla_{\Psi} w_{R_m} , \varphi ) \Vol_{\mathbb{H}^2(-a^2)} \\
& = \int_{\mathbb{H}^2(-a^2)} g (w_{R_m}    , \nabla_{\Psi} \varphi  )  \Vol_{\mathbb{H}^2(-a^2)} \\
& = \int_{\Omega (R_0 , \widetilde{R} )}  g (w_{R_m}    , \nabla_{\Psi} \varphi  )  \Vol_{\mathbb{H}^2(-a^2)},
\end{split}
\end{equation}
as well as that
\begin{equation}\label{Usemaster2}
-\int_{\Omega (R_0 , \widetilde{R} )}  g (\nabla_{w_{R_m}} w_{R_m} , \varphi ) \Vol_{\mathbb{H}^2(-a^2)}
= \int_{\Omega (R_0 , \widetilde{R} )} g ( w_{R_m} , \nabla_{w_{R_m}} \varphi   ) \Vol_{\mathbb{H}^2(-a^2)} .
\end{equation}

So, in light of \eqref{useMaster1} and \eqref{Usemaster2},   \eqref{RelationForPasstoLimit} can now be rephrased in the following equivalent, but more useful form.
\begin{equation}\label{UsefulForPasstoLimit}
\begin{split}
& (( w_{R_m}  , \varphi    ))_{H^1_0(\Omega (R_0))} \\
= & \Big < \Phi , \varphi \Big >_{\textbf{V}'  ( \Omega (R_0)  )\otimes \textbf{V} ( \Omega (R_0)  )}
- \int_{\Omega (R_0 ,\widetilde{R})} g ( \nabla_{w_{R_m}} \Psi  , \varphi  )\Vol_{\mathbb{H}^2(-a^2)} \\
&+ \int_{\Omega (R_0 , \widetilde{R} )}  g (w_{R_m}    , \nabla_{\Psi} \varphi  )  \Vol_{\mathbb{H}^2(-a^2)}
+ \int_{\Omega (R_0 , \widetilde{R} )} g ( w_{R_m} , \nabla_{w_{R_m}} \varphi   ) \Vol_{\mathbb{H}^2(-a^2)} .
\end{split}
\end{equation}

We note that we have the following compact inclusion \cite[Theorem 2.9]{Hebey}.
\begin{equation*}
H^1 \big ( \Omega (R_0 , \widetilde{R}) \big ) \subset \subset L^4\big ( \Omega (R_0 , \widetilde{R})  \big ).
\end{equation*}
So, the fact that $\big \{ w_{R_m}\big \}_{m=1}^{\infty}$ is bounded in $H^1 \big ( \Omega (R_0 , \widetilde{R}) \big )$ implies that, up to taking another subsequence if necessary, we will have the following strong convergence.
\begin{equation}\label{strongConverge}
\lim_{m\rightarrow \infty } \big \| w_{R_m} - \widetilde{w} \big \|_{L^4 ( \Omega (R_0 , \widetilde{R})   ) } = 0 .
\end{equation}
Observe  
\begin{equation}\label{ineq1}
\begin{split}
& \Big | \int_{\Omega (R_0 , \widetilde{R} )} g ( \nabla_{w_{R_m}} \Psi, \varphi  ) - g ( \nabla_{\widetilde{w}} \Psi , \varphi )  \Vol_{\mathbb{H}^2(-a^2)}  \Big | \\
\leq & \big \| w_{R_m} - \widetilde{w} \big \|_{L^4 ( \Omega (R_0 , \widetilde{R})   ) }
\big \| \nabla \Psi \big \|_{L^2(\mathbb{H}^2(-a^2))} \big \| \varphi \big \|_{L^4 (\mathbb{H}^2(-a^2))}.
\end{split}
\end{equation}
Thus \eqref{strongConverge} allows us to pass to the limit on \eqref{ineq1} and deduce that
\begin{equation}\label{Done1}
\lim_{m\rightarrow \infty } \int_{\Omega (R_0 , \widetilde{R} )} g (  \nabla_{w_{R_m}} \Psi , \varphi    ) \Vol_{\mathbb{H}^2(-a^2)} = \int_{\Omega (R_0 , \widetilde{R} )} g (\nabla_{\widetilde{w}} \Psi ,\varphi ) \Vol_{\mathbb{H}^2(-a^2)}.
\end{equation}
Arguing in the same manner, \eqref{strongConverge} immediately leads to
\begin{equation}\label{Done2}
\lim_{m \rightarrow \infty } \int_{\Omega (R_0 , \widetilde{R} )}  g (w_{R_m}    , \nabla_{\Psi} \varphi  )  \Vol_{\mathbb{H}^2(-a^2)}
= \int_{\Omega (R_0 , \widetilde{R} )}  g (\widetilde{w} , \nabla_{\Psi} \varphi )
\Vol_{\mathbb{H}^2(-a^2)}.
\end{equation}

Next
\begin{equation}\label{INEQ2}
\begin{split}
& \Big | \int_{\Omega (R_0 , \widetilde{R} )} g ( w_{R_m} , \nabla_{w_{R_m}} \varphi   )
- g (\widetilde{w} , \nabla_{\widetilde{w}} \varphi ) \Vol_{\mathbb{H}^2(-a^2)} \Big | \\
\leq & \Big | \int_{\Omega (R_0 , \widetilde{R} )} g ( w_{R_m} -\widetilde{w} , \nabla_{w_{R_m}} \varphi   ) \Vol_{\mathbb{H}^2(-a^2)} \Big |
+ \Big | \int_{\Omega (R_0 , \widetilde{R} )} g (\widetilde{w} , \nabla_{(w_{R_m} - \widetilde{w})} \varphi )\Vol_{\mathbb{H}^2(-a^2)} \Big | \\
& \leq \| w_{R_m} - \widetilde{w} \big \|_{L^4 ( \Omega (R_0 , \widetilde{R})   ) }
\big \|\nabla \varphi \big \|_{L^2(\mathbb{H}^2(-a^2))}
\Big \{ \big \| w_{R_m}\big \|_{L^4( \Omega (R_0 , \widetilde{R} )  )} + \big \|\widetilde{w} \big \|_{L^4( \Omega (R_0 , \widetilde{R} )  )}\Big \}.
\end{split}
\end{equation}

So, again by \eqref{strongConverge} we have
\begin{equation}\label{Done3}
\lim_{m\rightarrow \infty } \int_{\Omega (R_0 , \widetilde{R} )} g ( w_{R_m} , \nabla_{w_{R_m}} \varphi   ) \Vol_{\mathbb{H}^2(-a^2)}
= \int_{\Omega (R_0 , \widetilde{R} )}  g (\widetilde{w} , \nabla_{\widetilde{w}} \varphi )         \Vol_{\mathbb{H}^2(-a^2)}.
\end{equation}
Using \eqref{Done1}, \eqref{Done2}, \eqref{Done3}, as well as \eqref{Weakconvergence}, we can now pass to the limit in \eqref{UsefulForPasstoLimit} to deduce 
\begin{equation}\label{Alt}
\begin{split}
& (( \widetilde{w}  , \varphi    ))_{H^1_0(\Omega (R_0))} \\
= & \Big < \Phi , \varphi \Big >_{\textbf{V}'  ( \Omega (R_0)  )\otimes \textbf{V} ( \Omega (R_0)  )}
- \int_{\Omega (R_0 ,\widetilde{R})} g ( \nabla_{\widetilde{w}} \Psi  , \varphi  )\Vol_{\mathbb{H}^2(-a^2)} \\
&+ \int_{\Omega (R_0 , \widetilde{R} )}  g (\widetilde{w}    , \nabla_{\Psi} \varphi  )  \Vol_{\mathbb{H}^2(-a^2)}
+ \int_{\Omega (R_0 , \widetilde{R} )} g ( \widetilde{w} , \nabla_{\widetilde{w}} \varphi   ) \Vol_{\mathbb{H}^2(-a^2)} .
\end{split}
\end{equation}
Final application of  \cite[Lemma 4.2]{CC13} gives
\begin{equation}\label{ID}
\begin{split}
\int_{\Omega (R_0 , \widetilde{R} )}  g (\widetilde{w}    , \nabla_{\Psi} \varphi  )  \Vol_{\mathbb{H}^2(-a^2)}
& = -\int_{\Omega (R_0)} g (\nabla_{\Psi} \widetilde{w}    ,  \varphi  )\Vol_{\mathbb{H}^2(-a^2)} ,\\
\int_{\Omega (R_0 , \widetilde{R} )} g ( \widetilde{w} , \nabla_{\widetilde{w}} \varphi   ) \Vol_{\mathbb{H}^2(-a^2)}
& = - \int_{\Omega (R_0)} g ( \nabla_{\widetilde{w}} \widetilde{w}   , \varphi      )\Vol_{\mathbb{H}^2(-a^2)}.
\end{split}
\end{equation}
The two identities in \eqref{ID} allow us to convert \eqref{Alt} to \eqref{WEAKReally}.
Since the test $1$-form $\varphi \in \Lambda_{c, \sigma}^1 \big ( \Omega (R_0) \big ) $ was arbitrary, we conclude that $\widetilde{w}\in \textbf{V} \big ( \Omega (R_0) \big )$ is a weak solution to the system \eqref{TureNSSecond} as needed.

\subsection{Recovering the pressure from the weak formulation \eqref{WEAKReally}.}\label{EASYsubsection}

Let $\widetilde{w} \in \textbf{V}(\Omega (R_0))$  be the element which we constructed in Section \ref{approx}. We then consider a linear operator $\widetilde{L} : H^1_0(\Omega (R_0)) \rightarrow \mathbb{R}$ as follows.

\begin{equation}\label{OperatorWEAKReally}
\begin{split}
\Big <\widetilde{L} ,\varphi \Big > = & \int_{\Omega (R_0)} g (\dd \widetilde{w} , \dd \varphi ) \Vol_{\mathbb{H}^2(-a^2)} + 2a^2 \int_{\Omega (R_0)}
g(\widetilde{w}, \varphi ) \Vol_{\mathbb{H}^2(-a^2)} \\
&-  \Big < \Phi , \varphi \Big >_{H^{-1} ( \Omega (R_0)  )\otimes H^1_0 ( \Omega (R_0)  )}
+ \int_{\Omega (R_0)} g ( \nabla_{\widetilde{w}} \Psi  , \varphi  )\Vol_{\mathbb{H}^2(-a^2)} \\
& + \int_{\Omega (R_0)} g (\nabla_{\Psi} \widetilde{w} , \varphi ) \Vol_{\mathbb{H}^2(-a^2)}
+\int_{\Omega (R_0)}  g (\nabla_{\widetilde{w}} \widetilde{w} , \varphi ) \Vol_{\mathbb{H}^2(-a^2)}.
\end{split}
\end{equation}
By applying \eqref{Weired1} to $\nabla \Psi$ and \eqref{reallyEASY} to both $\widetilde{w}$ and $\varphi$, we deduce  
\begin{equation}\label{REALLYTrivial1}
\begin{split}
\Big | \int_{\Omega (R_0)} g ( \nabla_{\widetilde{w}} \Psi , \varphi    ) \Vol_{\mathbb{H}^2(-a^2)}  \Big |
\leq C(a,R_0) \big \| \nabla \widetilde{w} \big \|_{L^2(\Omega (R_0) )} \big \| \dd F \big \|_{L^2(\mathbb{H}^2(-a^2))}
\big \| \nabla \varphi \big \|_{L^2(\Omega (R_0))}.
\end{split}
\end{equation}
Similarly,  \eqref{weired2}  and \eqref{reallyEASY} give
\begin{equation}\label{REALLYTrivial2}
\begin{split}
\Big | \int_{\Omega (R_0)} g ( \nabla_{\Psi} \widetilde{w} , \varphi    ) \Vol_{\mathbb{H}^2(-a^2)}  \Big |
\leq  C(a,R_0) \big \| \dd F \big \|_{L^2(\mathbb{H}^2(-a^2))} \big \| \nabla \widetilde{w} \big \|_{L^2(\Omega (R_0))} \big \| \nabla \varphi \big \|_{L^2(\Omega (R_0))},
\end{split}
\end{equation}

and
\begin{equation}\label{REALLYTrivial3}
\begin{split}
\Big | \int_{\Omega (R_0)} g ( \nabla_{\widetilde{w}} \widetilde{w} , \varphi    ) \Vol_{\mathbb{H}^2(-a^2)}  \Big |
& = \Big | \int_{\Omega (R_0)} g (  \widetilde{w} , \nabla_{\widetilde{w}} \varphi    ) \Vol_{\mathbb{H}^2(-a^2)}  \Big | \\
& \leq \big \| \widetilde{w} \big \|_{L^4(\Omega (R_0))}^2 \big \| \nabla \varphi \big \|_{L^2(\Omega (R_0))} \\
& \leq C_a \big \| \nabla \widetilde{w} \big \|_{L^2(\Omega (R_0))}^2 \big \| \nabla \varphi \big \|_{L^2(\Omega (R_0))}.
\end{split}
\end{equation}

Note that the very same sequence of logical steps as demonstrated in \eqref{TEDIOUS1}, \eqref{EXTRA1}, \eqref{Extra2}, \eqref{Extra3}   gives the following   analog of \eqref{Phidualest1}.

\begin{equation}\label{PhiDualest31}
\big \| \Phi \big \|_{H^{-1} (\Omega (R_0) )} \leq C(a,R_0) \Big \{  \big \| \dd F \big \|_{L^2(\mathbb{H}^2(-a^2))}
  +   \big \| \dd F \big \|_{L^2(\mathbb{H}^2(-a^2))}^2     \Big \} .
\end{equation}

\eqref{PhiDualest31} justifies the use of the notation $  \Big < \Phi , \varphi \Big >_{H^{-1} ( \Omega (R_0)  )\otimes H^1_0 ( \Omega (R_0)  )}$ in the definition \eqref{OperatorWEAKReally} for the operator $\widetilde{L}$.
In any case, by combining \eqref{REALLYTrivial1}-\eqref{PhiDualest31}, we deduce  $\widetilde{L} \in H^{-1} (\Omega (R_0))$. Next, the fact that $\widetilde{w}$ satisfies the weak formulation \eqref{WEAKReally} simply tells us that the restriction of the linear operator $\widetilde{L}$ on $\textbf{V}(\Omega (R_0))$ vanishes identically. That is, we now have
\begin{equation}\label{FinalCondition}
\begin{split}
& \widetilde{L}  \in H^{-1} \big ( \Omega (R_0) \big ) , \\
& \widetilde{L} \big |_{\textbf{V}(\Omega (R_0) )}  = 0 .
\end{split}
\end{equation}

\eqref{FinalCondition} allows us to invoke Lemma \ref{RecoverPressure} to deduce that there exists some $P \in L^2_{loc}(\Omega (R_0))$ such that the pair $(\widetilde{w} , P)$ constitutes a solution to the system \eqref{TureNSSecond} on $\Omega (R_0)$. 

So summarizing up to now, by considering the element $v$ as given by   \eqref{FormoftheSol}, with $w$ to be given by Lemma \ref{KeyLEMMA}, it follows that $v \in H^1_0 (\Omega (R_0))$, and that the pair $(v, P)$ constitutes a solution to the system \eqref{TrueNavierStokeseq} on $\Omega (R_0)$. Moreover, Lemma \ref{JudgementTWO} immediately tells us that such an element $v$   must be  nontrivial.

\subsection{The proof of Lemma \ref{ReallyStandardresult} via Leray-Schauder fixed point principle.}\label{RoutineSubsection}

Again, this is based on \cite{Seregin}.  To begin, let ${C}(a,R_0)$ be the same absolute constant as specified in Lemma \ref{ImportantLemma}, and let $\dd F$ be given and satisfy  \eqref{ConditionondFTWO}. 

Fix $R > 5 R_0$. For any arbitrary  $\theta \in \textbf{V} (\Omega (R_0 , R ))$, we consider the linear operator $\textbf{L}(\theta) : \textbf{V} (\Omega (R_0 , R)) \rightarrow \mathbb{R}$ defined by
\begin{equation}\label{operatorLeraySch}
\begin{split}
\Big < \textbf{L}(\theta ) ,\varphi \Big > = & \Big < \Phi , \varphi \Big >_{\textbf{V}'  ( \Omega (R_0 , R )  )\otimes \textbf{V} ( \Omega (R_0 , R )  )}
- \int_{\Omega (R_0 , R)} g ( \nabla_{\theta} \Psi  , \varphi  )\Vol_{\mathbb{H}^2(-a^2)} \\
& - \int_{\Omega (R_0 , R )} g (\nabla_{\Psi} \theta , \varphi ) \Vol_{\mathbb{H}^2(-a^2)}
-\int_{\Omega (R_0 , R )}  g (\nabla_{\theta} \theta , \varphi ) \Vol_{\mathbb{H}^2(-a^2)}.
\end{split}
\end{equation}

Now, by merely replacing $\widetilde{w}$ by $\theta$ and $\Omega (R_0)$ by $\Omega (R_0 , R)$ in the derivations of estimates \eqref{REALLYTrivial1}, \eqref{REALLYTrivial2}, and \eqref{REALLYTrivial3}, we easily obtain the following three estimates 
\begin{equation}\label{Well1}
\begin{split}
&\Big | \int_{\Omega (R_0, R )} g ( \nabla_{\theta} \Psi , \varphi    ) \Vol_{\mathbb{H}^2(-a^2)}  \Big | \\
 \leq & C(a,R_0) \big \| \nabla \theta \big \|_{L^2(\Omega (R_0, R ) )} \big \| \dd F \big \|_{L^2(\mathbb{H}^2(-a^2))}
\big \| \nabla \varphi \big \|_{L^2(\Omega (R_0 ,R ))} ,
\end{split}
\end{equation}

\begin{equation}\label{Well2}
\begin{split}
& \Big | \int_{\Omega (R_0, R )} g ( \nabla_{\Psi} \theta , \varphi    ) \Vol_{\mathbb{H}^2(-a^2)}  \Big | \\
\leq & C(a,R_0) \big \| \dd F \big \|_{L^2(\mathbb{H}^2(-a^2))} \big \| \nabla \theta \big \|_{L^2(\Omega (R_0 , R))} \big \| \nabla \varphi \big \|_{L^2(\Omega (R_0 , R))} ,
\end{split}
\end{equation}

and

\begin{equation}\label{Well3}
\begin{split}
 \Big | \int_{\Omega (R_0, R)} g ( \nabla_{\theta} \theta , \varphi    ) \Vol_{\mathbb{H}^2(-a^2)}  \Big |
 \leq C_a \big \| \nabla \theta \big \|_{L^2(\Omega (R_0 , R))}^2 \big \| \nabla \varphi \big \|_{L^2(\Omega (R_0 , R))}.
\end{split}
\end{equation}
Through combining \eqref{Well1}-\eqref{Well3}, and \eqref{PhiDualest31}, we deduce  
\begin{equation}\label{Notuseful}
\begin{split}
\big \|\textbf{L}(\theta ) \big \|_{\textbf{V}'(\Omega (R_0 ,R  ))}
\leq  C(a,R_0) & \Big \{ \big \|\dd F \big \|_{L^2(\mathbb{H}^2(-a^2))} + \big \|\dd F \big \|_{L^2(\mathbb{H}^2(-a^2))}^2  \\
& + \big \| \nabla \theta \big \|_{L^2(\Omega (R_0 , R ))}\big \|\dd F \big \|_{L^2(\mathbb{H}^2(-a^2))}     + \big \| \nabla \theta \big \|_{L^2(\Omega (R_0 , R ))}^2        \Big \}.
\end{split}
\end{equation}

Now, we consider the following inner product  on $\textbf{V} (\Omega (R_0 , R))$.
\begin{equation*}
(( \varphi_1 , \varphi_2  ))_{\textbf{V}(\Omega (R_0 , R))}
= \int_{\Omega (R_0 , R )}  g(\dd \varphi_1 , \dd \varphi_2) + 2a^2 g (\varphi_1 , \varphi_2 )\Vol_{\mathbb{H}^2(-a^2)}.
\end{equation*}

The Riesz Representation Theorem gives us the natural isomorphism

\begin{equation*}
\textbf{T}_{Riesz} :\textbf{V}'(\Omega (R_0 ,R)) \rightarrow \textbf{V}(\Omega (R_0 , R)  ) ,
\end{equation*}
which is characterized by the following relation.

\begin{equation*}
\Big < \textbf{l} , \varphi \Big >_{\textbf{V}'(\Omega (R_0 ,R))\otimes \textbf{V}(\Omega (R_0 , R)  )}
= ((\textbf{T}_{Riesz} (\textbf{l}) , \varphi    ))_{\textbf{V}(\Omega (R_0 , R ))} ,
\end{equation*}
where $\textbf{l} \in \textbf{V}'(\Omega (R_0 ,R))$ and $\varphi \in \textbf{V}(\Omega (R_0 , R)  ) $ are arbitrary. \\

So, we can now consider the map $\textbf{B} : \textbf{V} (\Omega (R_0 , R ) ) \rightarrow \textbf{V} (\Omega (R_0 , R ))$ which is defined as follows.
\begin{equation}\label{B}
\textbf{B} (\theta ) =  \textbf{T}_{Riesz} ( \textbf{L} (\theta )   ) ,
\end{equation}
where $\theta \in \textbf{V} (\Omega (R_0 , R ))$ is arbitrary.

Just observe that proving the existence of an element $w_R \in \textbf{V} (\Omega (R_0 , R))$ satisfying the weak formulation \eqref{approxWEAK} is logically equivalent to proving the existence of a solution element $\theta \in \textbf{V}(\Omega (R_0 , R))$ to the following equation.
\begin{equation}\label{reformulation1}
\textbf{B} (\theta) = \theta .
\end{equation}

According to the Leray-Schauder fixed point theorem, one can deduce the existence of a solution $\theta \in \textbf{V}(\Omega (R_0 , R))$ to \eqref{reformulation1}, provided one can verify the following condtions
\begin{itemize}
\item  [I)] The operator $\textbf{B}$ is a compact operator from $\textbf{V} (\Omega (R_0 , R) )$ into itself.
\item [II)] There exists some constant $D_0 > 0$ such that for any $\lambda \in (0,1]$, and whenever we have an element $\theta_{\lambda} \in \textbf{V}(\Omega (R_0 ,R))$ which satisfies the relation
    \begin{equation}\label{perturbequation}
    \theta_{\lambda} = \lambda \textbf{B} (\theta_{\lambda}) ,
    \end{equation}
    it follows that we have $\| \nabla \theta_{\lambda } \|_{L^2(\Omega (R_0 , R))} \leq D_0$.
\end{itemize}

Through similar arguments as in Section \ref{approx} we can show $\textbf{B}$ is compact; we omit the details.  We show however the details for Condition II above.  Hence, suppose that $\theta_{\lambda} \in \textbf{V} (\Omega (R_0 , R))$ satisfies \eqref{perturbequation} for some given $\lambda \in (0,1]$. This means the same as saying that $\theta_{\lambda}$ satisfies the following relation for any test $1$-form $\varphi \in \textbf{V} (\Omega (R_0 , R))$.

\begin{equation}\label{PerturbWEAK}
\begin{split}
& \int_{\Omega (R_0 , R)} g (\dd \theta_{\lambda} , \dd \varphi ) \Vol_{\mathbb{H}^2(-a^2)} + 2a^2 \int_{\Omega (R_0 , R)}
g(\theta_{\lambda}, \varphi ) \Vol_{\mathbb{H}^2(-a^2)} \\
= &\lambda \Big < \Phi , \varphi \Big >_{\textbf{V}'  ( \Omega (R_0 , R )  )\otimes \textbf{V} ( \Omega (R_0 , R )  )}
- \lambda \int_{\Omega (R_0 , R)} g ( \nabla_{\theta_{\lambda}} \Psi  , \varphi  )\Vol_{\mathbb{H}^2(-a^2)} \\
& - \lambda \int_{\Omega (R_0 , R )} g (\nabla_{\Psi} \theta_{\lambda} , \varphi ) \Vol_{\mathbb{H}^2(-a^2)}
-\lambda \int_{\Omega (R_0 , R )}  g (\nabla_{\theta_{\lambda}} \theta_{\lambda} , \varphi ) \Vol_{\mathbb{H}^2(-a^2)}.
\end{split}
\end{equation}

So, by taking $\varphi$ to be $\theta_{\lambda}$ in \eqref{PerturbWEAK}, we can deduce 

\begin{equation}\label{substitutetheta}
\begin{split}
& \big \| \nabla \theta_{\lambda} \big \|_{L^2(\Omega ( R_0 , R ))}^2 + a^2 \big \| \theta_{\lambda} \big \|_{L^2(\Omega (R_0 , R))}^2 \\
= & \big \| \dd \theta_{\lambda} \big \|_{L^2(\Omega ( R_0 , R ))}^2 + 2a^2 \big \| \theta_{\lambda} \big \|_{L^2(\Omega (R_0 , R))}^2 \\
= & \lambda \Big < \Phi , \theta_{\lambda} \Big >_{\textbf{V}'  ( \Omega (R_0 , R )  )\otimes \textbf{V} ( \Omega (R_0 , R )  )}
- \lambda \int_{\Omega (R_0 , R)} g ( \nabla_{\theta_{\lambda}} \Psi  , \theta_{\lambda}  )\Vol_{\mathbb{H}^2(-a^2)} \\
 & - \lambda \int_{\Omega (R_0 , R )} g (\nabla_{\Psi} \theta_{\lambda} , \theta_{\lambda} ) \Vol_{\mathbb{H}^2(-a^2)}
- \lambda \int_{\Omega (R_0 , R )}  g (\nabla_{\theta_{\lambda}} \theta_{\lambda} , \theta_{\lambda} ) \Vol_{\mathbb{H}^2(-a^2)}.
\end{split}
\end{equation}
It is important to make the following observation:

\begin{itemize}
\item  The structure of \eqref{substitutetheta} is essentially identical to that of \eqref{substituteWR}. The exception is that there is just now an extra multiplicative factor $\lambda \in (0,1]$ on the right-hand side of \eqref{substitutetheta} (and that in \eqref{substitutetheta}, the term $w_R$ is now replaced by $\theta_{\lambda}$).
\end{itemize}

The above observation allows us to conclude that the sequence of  the estimates done for $w_R$ in Section \ref{subsection3.4} applies  to $\theta_{\lambda}$, and this allows us to conclude that $\theta_{\lambda}$ definitely satisfies the very same a priori estimate \eqref{GOODESTIMATE} as $w_R$ does. That is, we must have

\begin{equation}\label{ThetaGOODESTIMATE}
\big \| \nabla \theta_{\lambda} \big \|_{L^2(\Omega (R_0 ,R))}^2 \leq
\frac{\big ( {C}(a,R_0) \big )^2 \Big \{ \big \| \dd F \big \|_{L^2(\mathbb{H}^2(-a^2))} + \big \| \dd F \big \|_{L^2(\mathbb{H}^2(-a^2))}^2     \Big \}^2  }{\Big ( 1- 2{C}(a,R_0)  \|\dd F \|_{L^2(\mathbb{H}^2(-a^2))} \Big ) } .
\end{equation}

So, the a priori estimate \eqref{ThetaGOODESTIMATE} indicates that Condition II has also been verified. So, the Leray-Schauder fixed point theorem can be applied to the operator $\textbf{B}$ to deduce the existence of an element $w_R \in \textbf{V} (\Omega (R_0 ,R))$ which satisfies the relation $w_R =  \textbf{B} (w_R)$. This, however, means the same as saying that we now have an element $w_R$ which satisfies the weak formulation \eqref{approxWEAK}. So, the proof of Lemma \ref{ReallyStandardresult} is completed.

\subsection{The main result arrived through our discussions in this section.}

Here we just summarize the arguments in the previous sections, which can also be viewed as a detailed version of Theorem \ref{main2}.
\begin{theo}
Given $R_0 > 0$ and $a > 0$. Then, there exists an absolute constant ${C} (a, R_0) > 0$, which depends only on $R_0$ and $a$, such that for any harmonic $1$-form $\dd F \in L^2(\mathbb{H}^2(-a^2))$ which satisfies the following estimate
\begin{equation*}
\big \| \dd F \big \|_{L^2(\mathbb{H}^2(-a^2))}  < \frac{1}{2 {C}(a ,R_0 )} ,
\end{equation*}
there exists an element $\widetilde{w} \in \textbf{V}\big ( \Omega (R_0) \big ) $ and some $P \in L^2_{loc} (\Omega (R_0))$ such that the pair $(\widetilde{w},P )$  is a solution to \eqref{TureNSSecond} on $\Omega (R_0)$, where $w$ is the one as specified in Lemma \ref{KeyLEMMA}. In addition, such an element $\widetilde{w}$ satisfies the following a priori estimate.
\begin{equation*}
\big \| \nabla \widetilde{w} \big \|_{L^2(\Omega (R_0 ))}^2 \leq
\frac{\big ( {C}(a,R_0) \big )^2 \Big \{ \big \| \dd F \big \|_{L^2(\mathbb{H}^2(-a^2))} + \big \| \dd F \big \|_{L^2(\mathbb{H}^2(-a^2))}^2     \Big \}^2  }{\Big ( 1- 2  {C}(a,R_0)  \|\dd F \|_{L^2(\mathbb{H}^2(-a^2))} \Big ) } .
\end{equation*}
Moreover, if we consider the element $v$ which is defined by
\begin{equation*}
v = \big ( \eta_{R_0} - 1 \big ) \dd F + w + \widetilde{w} ,
\end{equation*}
it follows that $v$ is a nontrivial element in $H^1_0\big (\Omega (R_0) \big )$, and that the pair $(v , P)$ constitutes a solution to the stationary Navier-Stokes equation \eqref{TrueNavierStokeseq} on $\Omega (R_0)$.
\end{theo}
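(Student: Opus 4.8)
The plan is to assemble the ingredients of Sections~\ref{subsection4.1}--\ref{RoutineSubsection} into one statement, keeping track of a single absolute constant $C(a,R_0)$ throughout. First I would fix the ansatz $v=(\eta_{R_0}-1)\dd F+w+\widetilde w$, where $w$ is the $1$-form of Lemma~\ref{KeyLEMMA} solving $\dd^* w=g(\dd\eta_{R_0},\dd F)$ and $\widetilde w\in\textbf{V}(\Omega(R_0))$ is to be found, and reduce the system \eqref{TrueNavierStokeseq} to the problem \eqref{TureNSSecond} for $\widetilde w$, with $\Psi$ and $\Phi$ defined as in \eqref{PsiandPhi}. The preliminary step is the package of a priori bounds \eqref{EntireONE}--\eqref{entireTWO} controlling $\|w\|_4,\|\nabla w\|_2,\|\Psi\|_4,\|\nabla\Psi\|_2$ (hence $\|\Phi\|_{\textbf{V}'}$) by $\|\dd F\|_2+\|\dd F\|_2^2$; these come from the hyperbolic Ladyzhenskaya inequality \eqref{Ladyzhenskaya}, the Poincar\'e-type bound \eqref{PoincareType}, the harmonic estimate \eqref{estimateharmonic}, and \eqref{H1estimateforW}.

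Second, I would work on the bounded annuli $\Omega(R_0,R)$ with $R>5R_0$ and solve the truncated weak problem \eqref{approxWEAK} by the Leray--Schauder fixed point theorem: writing $\textbf{L}(\theta)$ as in \eqref{operatorLeraySch}, composing with the Riesz isomorphism to get $\textbf{B}=\textbf{T}_{Riesz}\circ\textbf{L}$, one checks that $\textbf{B}$ is compact via the compact embedding $H^1(\Omega(R_0,R))\subset\subset L^4(\Omega(R_0,R))$, and the a priori bound for $\theta_\lambda=\lambda\textbf{B}(\theta_\lambda)$ follows by testing with $\theta_\lambda$: the trilinear terms $\int g(\nabla_{\theta_\lambda}\theta_\lambda,\theta_\lambda)$ and $\int g(\nabla_{\Psi}\theta_\lambda,\theta_\lambda)$ vanish by \cite[Lemma 4.2]{CC13} (using $\dd^*\Psi=0$ and $\theta_\lambda\in\textbf{V}$), and the surviving term $\int g(\nabla_{\theta_\lambda}\Psi,\theta_\lambda)$ is absorbed into the left-hand side precisely under the smallness hypothesis $\|\dd F\|_2<\frac{1}{2C(a,R_0)}$, yielding \eqref{GOODESTIMATE}. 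This produces, for each $R$, a solution $w_R$ of \eqref{approxNSONE} satisfying \eqref{GOODESTIMATE} \emph{uniformly} in $R$.

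Third, I would pass to the limit. Since $\{w_R\}$ is bounded in $\textbf{V}(\Omega(R_0))$, extract a subsequence $w_{R_m}\rightharpoonup\widetilde w$ with $R_m\to\infty$; for a fixed $\varphi\in\Lambda^1_{c,\sigma}(\Omega(R_0))$ supported in some $\Omega(R_0,\widetilde R)$, the local compact embedding gives $w_{R_m}\to\widetilde w$ strongly in $L^4(\Omega(R_0,\widetilde R))$, which lets me pass to the limit in the three nonlinear integrals after rewriting the $\Psi$- and self-transport terms via \cite[Lemma 4.2]{CC13} so that a derivative falls on $\varphi$. The limit gives the weak form \eqref{WEAKReally}, i.e. $\widetilde w$ solves \eqref{TureNSSecond} on $\Omega(R_0)$ and inherits \eqref{GOODESTIMATE}. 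The pressure is then recovered exactly as in Section~\ref{EASYsubsection}: the functional $\widetilde L$ of \eqref{OperatorWEAKReally} lies in $H^{-1}(\Omega(R_0))$ by \eqref{REALLYTrivial1}--\eqref{PhiDualest31} and vanishes on $\textbf{V}(\Omega(R_0))$, so Lemma~\ref{RecoverPressure} produces $P\in L^2_{loc}(\Omega(R_0))$ with $(\widetilde w,P)$ solving \eqref{TureNSSecond}; backtracking gives $(v,P)$ solving \eqref{TrueNavierStokeseq} with $v\in H^1_0(\Omega(R_0))$.

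Finally, nontriviality is immediate from Lemma~\ref{JudgementTWO}: for \emph{any} $\widetilde w\in\textbf{V}(\Omega(R_0))$ the element $(\eta_{R_0}-1)\dd F+w+\widetilde w$ is nonzero, since if it vanished, testing against $\dd F$ and using $\int_{\Omega(R_0)}g(\widetilde w,\dd F)=0$ would force $\int_{\Omega(R_0)}g(w,\dd F)=\int_{\Omega(R_0)}(1-\eta_{R_0})g(\dd F,\dd F)>0$, contradicting \eqref{criteriaineq}. I expect the genuine difficulty to be the uniform-in-$R$ a priori estimate combined with the limit passage in the transport terms: one must be careful that the vanishing of the trilinear forms and the absorption step both go through with one constant $C(a,R_0)$ independent of $R$, so that the smallness threshold on $\|\dd F\|_2$ is exactly the one announced in the statement, and that the rewriting needed to take limits in $\nabla_{w_{R_m}}\Psi$ and $\nabla_{w_{R_m}}w_{R_m}$ is legitimate on the noncompact $\Omega(R_0)$.
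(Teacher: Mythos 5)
Your proposal follows the paper's own argument essentially verbatim: the same reduction to \eqref{TureNSSecond} via $\Psi$ and $\Phi$, the same a priori package \eqref{EntireONE}--\eqref{entireTWO}, the Leray--Schauder construction of $w_R$ on $\Omega(R_0,R)$ with the uniform bound \eqref{GOODESTIMATE} obtained by killing the trilinear terms via \cite[Lemma 4.2]{CC13} and absorbing $\int g(\nabla_{w_R}\Psi,w_R)$ under the smallness condition, the weak-limit passage using local compactness in $L^4$, the pressure recovery via Lemma \ref{RecoverPressure}, and nontriviality via Lemma \ref{JudgementTWO}. The outline is correct and matches the paper's proof.
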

\appendix
\section{Appendix: About recovering the pressure term from the Stokes- or Navier-Stokes equations.   }
In this section, we establish a lemma which we use to recover the pressure term of either the Stationary Stokes equation or the Stationary Navier-Stokes equation from their respective weak formulations. In our discussion, we use the following  result from the Euclidean theory.

\begin{lemm}\cite[Proposition 6.1, Section 1.6 ]{Seregin}\label{pressureLemmaEuc}
Let $\Omega$  be a bounded domain with Lipschitz boundary in $\mathbb{R}^N$, with $N = 2,3$.
Let
\begin{equation}
H^1_0 (\Omega ) = \overline{\Lambda_{c}^1 \big ( \Omega  \big ) }^{\|\cdot \|_{H^1(\Omega )}}.
\end{equation}
 Consider a bounded linear functional $\textbf{l} :  H_0^1(\Omega ) \rightarrow \mathbb{R}$ such that 
  \begin{equation}\label{Vanishing}
 \Big < \textbf{l} , \varphi \Big >_{H^{-1}(\Omega) \otimes H_0^1(\Omega )} = 0 ,
 \end{equation}
 for any test vector field $\varphi \in H_0^1(\Omega )$ which satisfies $\Div_{\mathbb{R}^N} \varphi = 0$ on $\Omega$.\\
 Then, it follow that there exists a function $P \in L^2(\Omega )$ such that the relation
 \begin{equation}\label{conditionPressure}
 \Big < \textbf{l} , \varphi \Big >_{H^{-1}(\Omega) \otimes H_0^1(\Omega )} = - \int_{\Omega } P \Div_{\mathbb{R}^N} \varphi \Vol_{\mathbb{R}^N}
 \end{equation}
 holds for any $\varphi \in H_0^1(\Omega )$.
\end{lemm}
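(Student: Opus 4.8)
The plan is to obtain Lemma~\ref{pressureLemmaEuc} as the dual statement to the surjectivity of the divergence, i.e.\ as a purely functional-analytic consequence of Theorem~\ref{Classicalresult} together with the closed range theorem. Write $\mathcal V=\{\varphi\in H^1_0(\Omega):\Div_{\mathbb R^N}\varphi=0\}$, a closed subspace of $H^1_0(\Omega)$, and $L^2_0(\Omega)=\{q\in L^2(\Omega):\int_\Omega q\,\Vol_{\mathbb R^N}=0\}$. Condition \eqref{Vanishing} says precisely that $\mathbf{l}$ annihilates $\mathcal V$, i.e.\ $\mathbf{l}\in\mathcal V^{\perp}\subset H^{-1}(\Omega)$, so the whole content of the lemma is the identification of $\mathcal V^{\perp}$ with the set of functionals $\varphi\mapsto-\int_\Omega P\,\Div_{\mathbb R^N}\varphi\,\Vol_{\mathbb R^N}$, $P\in L^2(\Omega)$. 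One inclusion is immediate: such a functional is bounded on $H^1_0(\Omega)$ and vanishes on $\mathcal V$, hence lies in $\mathcal V^{\perp}$.

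For the reverse inclusion I would work with the bounded operator $\Div_{\mathbb R^N}:H^1_0(\Omega)\to L^2(\Omega)$. Since $\int_\Omega\Div_{\mathbb R^N}\varphi\,\Vol_{\mathbb R^N}=0$ for every $\varphi\in H^1_0(\Omega)$, its range lies in $L^2_0(\Omega)$; and Theorem~\ref{Classicalresult} says this range is \emph{all} of $L^2_0(\Omega)$ (every mean-zero $h$ equals $\Div_{\mathbb R^N}U$ for some $U\in H^1_0(\Omega)$). As $L^2_0(\Omega)$ is closed in $L^2(\Omega)$, the operator $\Div_{\mathbb R^N}$ has closed range, so by the closed range theorem $\operatorname{ran}\bigl((\Div_{\mathbb R^N})^{*}\bigr)=(\ker\Div_{\mathbb R^N})^{\perp}=\mathcal V^{\perp}$. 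Under the Riesz identification of $L^2(\Omega)'$ with $L^2(\Omega)$, the adjoint sends $Q\in L^2(\Omega)$ to the functional $\varphi\mapsto\int_\Omega Q\,\Div_{\mathbb R^N}\varphi\,\Vol_{\mathbb R^N}$, which is, up to sign, the distributional gradient $L^2(\Omega)\to H^{-1}(\Omega)$. Hence every $\mathbf{l}\in\mathcal V^{\perp}$ is of this form for some $Q\in L^2(\Omega)$, and setting $P=-Q$ yields \eqref{conditionPressure} for all $\varphi\in H^1_0(\Omega)$, with $P$ unique up to an additive constant on each connected component of $\Omega$.

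Equivalently — and this is the version I would actually write out — one can avoid invoking the closed range theorem by appealing to the open mapping theorem directly: $\Div_{\mathbb R^N}$ is a bounded surjection of $H^1_0(\Omega)$ onto the Hilbert space $L^2_0(\Omega)$ with kernel $\mathcal V$, so it induces a Banach-space isomorphism $H^1_0(\Omega)/\mathcal V\xrightarrow{\ \sim\ }L^2_0(\Omega)$; dualizing this isomorphism and identifying $\bigl(H^1_0(\Omega)/\mathcal V\bigr)'\cong\mathcal V^{\perp}$ and $L^2_0(\Omega)'\cong L^2_0(\Omega)$ gives the desired representation with a mean-zero $P$. The only genuinely non-formal ingredient is the surjectivity of the divergence onto $L^2_0(\Omega)$ — the Bogovski\u{\i}--Ne\v{c}as estimate — but that is exactly Theorem~\ref{Classicalresult}, which we are taking as given here, and it is where the Lipschitz regularity of $\partial\Omega$ and the restriction $N=2,3$ genuinely enter. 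Everything after that is the closed range / open mapping theorem plus bookkeeping with dual pairings, so I anticipate no further obstacle.
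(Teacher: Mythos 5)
Your proposal is correct, but note that the paper does not actually prove Lemma \ref{pressureLemmaEuc}: it is imported verbatim from Seregin (and Galdi/Bogovski\u{\i}--Ne\v{c}as theory) as a black box, and only its \emph{consequence} on the hyperbolic exterior domain, Lemma \ref{RecoverPressure}, is proved in the appendix by a patching argument over the annuli $A(R_0,R_m)$. Your derivation --- realize condition \eqref{Vanishing} as $\mathbf{l}\in\mathcal V^{\perp}$, use Theorem \ref{Classicalresult} to see that $\Div_{\mathbb R^N}:H^1_0(\Omega)\to L^2_0(\Omega)$ is a bounded surjection with kernel $\mathcal V$, and then apply the closed range theorem (equivalently, dualize the induced isomorphism $H^1_0(\Omega)/\mathcal V\cong L^2_0(\Omega)$) to conclude $\mathcal V^{\perp}=\operatorname{ran}\bigl((\Div_{\mathbb R^N})^{*}\bigr)$ --- is exactly the standard proof of this representation and is the route taken in the cited sources, so you have supplied a correct, self-contained proof of a result the paper only cites. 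Two small points of bookkeeping: first, Theorem \ref{Classicalresult} as stated in this paper assumes a \emph{smooth} boundary while Lemma \ref{pressureLemmaEuc} assumes only Lipschitz, so to make your argument literally self-contained within the paper you should either invoke the Lipschitz version of the Bogovski\u{\i} estimate (which Galdi's Theorem III.3.3 does provide) or note that the annular domains to which the lemma is ultimately applied are smooth anyway; second, the hypothesis \eqref{Vanishing} is imposed on all of $\mathcal V=\{\varphi\in H^1_0:\Div_{\mathbb R^N}\varphi=0\}$ rather than merely on $\Lambda^1_{c,\sigma}(\Omega)$, which is precisely what lets you avoid the (true but nontrivial) identification of $\mathcal V$ with the closure of $\Lambda^1_{c,\sigma}(\Omega)$; your write-up correctly sidesteps that issue.
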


What follows is somewhat standard, but because of the different setting we include the details for completeness.  The pressure is recovered in a similar fashion also in \cite{Seregin}.

Since for any bounded domain $\Omega$ with Lipschitz boundary in $\mathbb{R}^N$,  $N = 2,3$,
\begin{equation*}
\big \{ \varphi \in H_0^1(\Omega ) : \Div_{\mathbb{R}^N} \varphi  = 0  \big \} = \overline{\Lambda_{c, \sigma }^1 \big ( \Omega  \big ) }^{\|\cdot \|_{H^1(\Omega )}},
\end{equation*}
to apply Lemma \ref{pressureLemmaEuc} one has to check the validity of relation \eqref{Vanishing} only for those test vector fields $\varphi$ in $\Lambda_{c,\sigma}^1(\Omega )$.\\

Recall  
\begin{equation*}
\begin{split}
\Omega (r_1) & = \big \{ x \in \mathbb{H}^2(-a^2) : \rho (x) > r_1 \big \} , \\
\Omega (r_1 , r_2) & = \big \{ x \in \mathbb{H}^2(-a^2) : r_1 < \rho (x) < r_2     \big \} .
\end{split}
\end{equation*}
In this section, we also use for any $0 < r_1 < r_2 < \infty$
\begin{equation*}
A( r_1 , r_2) = \Big \{ y \in D_0(1) :  \tanh \big ( \frac{a}{2} r_1 \big )< |y| < \tanh\big ( \frac{a}{2}r_2 \big ) \Big \} .
\end{equation*}
Then, by \eqref{easy_inc2} the following uniform estimate holds for any $1$-form
$\varphi = \varphi_1 \dd Y^1 + \varphi_2 \dd Y^2 \in \Lambda_{c }^1 (\Omega ( r_1 , r_2 ) )$
\begin{equation}\label{verytrivial}
\big \| \nabla \varphi \big \|_{L^2(\Omega (r_1, r_2))} \leq C_a \big \| \nabla_{\mathbb{R}^2} \varphi^{\sharp} \big \|_{L^2( A( r_1 ,r_2)   )} ,
\end{equation}
with $\varphi^{\sharp}$ to be given by
\begin{equation}\label{pullbackofVarphi}
\varphi^{\sharp} = \varphi_1 \circ Y^{-1} \dd y^1 + \varphi_2 \circ Y^{-1} \dd y^2 .
\end{equation}
 
Now, let $R_0 > 0$ to be fixed, and  

\begin{equation*}
H^1_0(\Omega (R_0)) = \overline{\Lambda_{c}^1 (\Omega (R_0))}^{\|\cdot \|_{H^1(\Omega (R_0))}} .
\end{equation*}

Next, consider the bounded linear functional $\textbf{L} : H^1_0(\Omega (R_0)) \rightarrow \mathbb{R}$  such that
\begin{equation}\label{hypvanish}
\Big < \textbf{L} , \varphi \Big >_{H^{-1} (\Omega (R_0)) \otimes H^1_0 (\Omega (R_0))} = 0,
\end{equation}
holds for any $\varphi \in \Lambda^1_{c, \sigma } (\Omega (R_0))$. 

Now, take any strictly increasing sequence $\{R_k \}_{k=1}^{\infty} \subset (R_0 , + \infty ) $ such that
\begin{equation*}
\lim_{m\rightarrow \infty} R_m = + \infty ,
\end{equation*}
and carry out an inductive argument as follows. For each  $m\in \mathbb{Z}^+$, look at the region $\Omega (R_0 , R_m)$, and define a linear map $\textbf{L}_m^{\sharp} : H^1_0 (A(R_0 , R_m)) \rightarrow \mathbb{R}$ by
\begin{equation}\label{quiteNatural1}
\big < \textbf{L}_m^{\sharp} , \varphi^{\sharp} \big > = \Big < \textbf{L} , \varphi \Big >_{H^{-1} (\Omega (R_0)) \otimes H^1_0 (\Omega (R_0))} ,
\end{equation}
where $\varphi$ and $\varphi^{\sharp}$ are related via relation \eqref{pullbackofVarphi}. Now, through applying estimate \eqref{verytrivial}, we deduce from \eqref{quiteNatural1} that
 
\begin{equation*}
\begin{split}
\Big | \big < \textbf{L}_m^{\sharp} , \varphi^{\sharp} \big > \Big | & = \Big | \Big < \textbf{L} , \varphi \Big >_{H^{-1} (\Omega (R_0)) \otimes H^1_0 (\Omega (R_0))} \Big | \\
& \leq \big \| \textbf{L} \big \|_{H^{-1} (\Omega (R_0))} \big \| \nabla \varphi \big \|_{L^2(\Omega (R_0 , R_m))} \\
& \leq C_a\big \| \textbf{L} \big \|_{H^{-1} (\Omega (R_0))} \big \|\nabla_{\mathbb{R}^2} \varphi^{\sharp} \big \|_{L^2(A(R_0 , R_m))}.
\end{split}
\end{equation*}
Hence
\begin{equation}\label{condit1}
\big \| \textbf{L}^\sharp_m \big \|_{H^{-1} (A(R_0 , R_m))} \leq C_a\big \| \textbf{L} \big \|_{H^{-1} (\Omega (R_0))}.
\end{equation}

Since by \eqref{hypdiv}, $\dd^* \varphi = 0$ holds on $\Omega (R_0 , R_m)$ if and only if $\Div_{\mathbb{R}^2} \varphi^{\sharp} = 0$ holds on $A(R_0 , R_m)$, it follows from \eqref{quiteNatural1} that we must have
\begin{equation}\label{condit2}
\textbf{L}_m^{\sharp} \big |_{\textbf{V}( A(R_0 , R_m)  )} = 0 ,
\end{equation}
where  
\begin{equation*}
\textbf{V} \big ( A(R_0 , R_m) \big ) = \overline{\Lambda_{c, \sigma}^1\big ( A(R_0 , R_m)\big ) }^{\|\cdot \|_{H^1(A(R_0 , R_m))}}.
\end{equation*}

     Property \eqref{condit1} and property \eqref{condit2} together ensure that Lemma \ref{pressureLemmaEuc} can be directly applied to each $\textbf{L}_{m}^{\sharp}$.

So, we apply Lemma \ref{pressureLemmaEuc} to $\textbf{L}_1^{\sharp}$ to deduce that there exists some function
$P_1^{\sharp} \in L^2(A(R_0 , R_1))$ such that the following relation holds in the weak sense.
\begin{equation*}
\textbf{L}_1^{\sharp} = \nabla_{\mathbb{R}^2} P_1^{\sharp} ,
\end{equation*}
from which we deduce that the following relation holds for any $\varphi \in \Lambda_{c}^1 (\Omega (R_0 , R_1))$
\begin{equation}\label{same1}
\begin{split}
\Big < \textbf{L} , \varphi \Big >_{H^{-1} (\Omega (R_0)) \otimes H^1_0 (\Omega (R_0))}
& = \Big < L_1^{\sharp} , \varphi^{\sharp} \Big >_{H^{-1} (A(R_0 , R_1)) \otimes H^1_0 (A (R_0 , R_1))} \\
& = - \int_{A(R_0 , R_1 )} P_1^{\sharp} \Div_{\mathbb{R}^2} \varphi^{\sharp} \Vol_{\mathbb{R}^2} \\
& = \int_{\Omega (R_0 , R_1)} P_1 \dd^* \varphi \Vol_{\mathbb{H}^2(-a^2)} ,
\end{split}
\end{equation}
where the function $P_1 \in L^2(\Omega (R_0 , R_1) )$ is exactly given by
\begin{equation*}
P_1 = P_1^{\sharp} \circ Y .
\end{equation*}
So, by repeating the same argument, we deduce that for each $m \geq 2$, there exists some function $P_m \in L^2(\Omega (R_0 , R_m))$ such that the following relation holds for all test $1$-forms $\varphi \in \Lambda_{c}^1 (\Omega (R_0 , R_m))$.
\begin{equation}\label{same2}
\begin{split}
\Big < \textbf{L} , \varphi \Big >_{H^{-1} (\Omega (R_0)) \otimes H^1_0 (\Omega (R_0))}
 = \int_{\Omega (R_0 , R_m)} P_m \dd^* \varphi \Vol_{\mathbb{H}^2(-a^2)} .
\end{split}
\end{equation}
\eqref{same1} together with \eqref{same2} tells us 
\begin{equation*}
\dd \big ( P_m  - P_1 \big ) = 0
\end{equation*}
holds on $\Omega (R_0 , R_1)$, and hence there exists some constant $C_{1,m} \in \mathbb{R}$ for which  
\begin{equation*}
P_m \big |_{\Omega (R_0 , R_1)} = P_1 + C_{1,m} .
\end{equation*}
So, we can define the new function $\widetilde{P}_m \in L^2(\Omega (R_0 , R_m))$ by

\begin{equation*}
\widetilde{P}_m  = P_m - C_{1,m} .
\end{equation*}

The important point to make here is that $\widetilde{P}_m$ satisfies $\widetilde{P}_m \big |_{ \Omega (R_0 , R_1)} = P_1$, while the original $P_m$ may not.  The function $\widetilde{P}_m$ now satisfies
\begin{equation}
\Big < \textbf{L} , \varphi \Big >_{H^{-1} (\Omega (R_0)) \otimes H^1_0 (\Omega (R_0))} = \int_{\Omega (R_0 , R_m)}
 \widetilde{P}_m \dd^* \varphi \Vol_{\mathbb{H}^2(-a^2)},
\end{equation}
for any $\varphi \in \Lambda_c^1(\Omega (R_0 , R_m))$.
Finally, take any two integers $m,n$ satisfying $1 < m < n$. We then have to check that  
\begin{equation}\label{gluingidentity}
\widetilde{P}_n \big |_{\Omega (R_0 , R_m)} = \widetilde{P}_m .
\end{equation}
Similarly as before, since we know that $\dd (\widetilde{P}_n - \widetilde{P}_m) = 0$ holds on $\Omega (R_0 , R_m)$, the following identity holds for some constant $C_{m,n} \in \mathbb{R}$.
\begin{equation*}
\widetilde{P}_n \big |_{\Omega (R_0 , R_m)} = \widetilde{P}_m + C_{m,n} ,
\end{equation*}
form which it follows that
\begin{equation*}
\begin{split}
C_{m,n} & = \widetilde{P}_n \big |_{\Omega (R_0, R_1)} - \widetilde{P}_m \big |_{\Omega (R_0, R_1)} \\
&= \big \{ \widetilde{P}_n \big |_{\Omega (R_0, R_1)} -P_1 \} - \big \{\widetilde{P}_m \big |_{\Omega (R_0, R_1)}    - P_1 \big \} \\
& = 0.
\end{split}
\end{equation*}
This shows that \eqref{gluingidentity} definitely holds for any $1 < m < n$, and this allows us to construct a globally defined function $P \in L^2_{loc}(\Omega (R_0))$ in accordance to the following rule.

\begin{equation*}
P \big |_{\Omega (R_0 , R_m)} = \widetilde{P}_m ,
\end{equation*}
where $m \geq 2$. As a result, such a $P \in L^2_{loc}(\Omega (R_0))$ satisfies the following relation for each test $1$-form
$\varphi \in \Lambda_c^1 (\Omega (R_0))$.
\begin{equation*}
\Big < \textbf{L} , \varphi \Big >_{H^{-1}(\Omega (R_0)) \otimes H^1_0 (\Omega (R_0))} =  \int_{\Omega (R_0)} P \dd^* \varphi \Vol_{\mathbb{H}^2(-a^2)} .
\end{equation*}
In other words, the following relation holds on $\Omega (R_0)$ in the distributional sense.

\begin{equation*}
\textbf{L} = \dd P .
\end{equation*}

Our discussion in this section leads to the following lemma.

\begin{lemm}\label{RecoverPressure}
Let $R_0 > 0$, and
\begin{equation*}
H^1_0 (\Omega (R_0)) = \overline{\Lambda_c^1 (\Omega (R_0)  ) }^{\|\cdot \|_{H^1(\Omega (R_0))}}.
\end{equation*}
Consider a bounded linear functional $\textbf{L} : H^1_0 (\Omega (R_0)) \rightarrow \mathbb{R}$, which satisfies 
\begin{equation*}
\Big < \textbf{L} , \varphi \Big >_{H^{-1}(\Omega (R_0)) \otimes H^1_0 (\Omega (R_0))} = 0,
\end{equation*}
for all test $1$-forms $\varphi \in \textbf{V} (\Omega (R_0))$ .

Then, there exists a function $P \in L^2_{loc} (\Omega (R_0))$ such that the following relation holds for any
test $1$-form $\varphi \in \Lambda_c^1 (\Omega (R_0) )$.
\begin{equation*}
\Big < \textbf{L} , \varphi \Big >_{H^{-1}(\Omega (R_0)) \otimes H^1_0 (\Omega (R_0))} =  \int_{\Omega (R_0)} P \dd^* \varphi \Vol_{\mathbb{H}^2(-a^2)} .
\end{equation*}
In other words, the statement $\textbf{L} = \dd P$ holds on $\Omega (R_0)$ in the sense of distributions.
\end{lemm}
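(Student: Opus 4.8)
The plan is to deduce the statement from its Euclidean counterpart, Lemma \ref{pressureLemmaEuc}, applied along an exhaustion of $\Omega(R_0)$ by bounded annular regions, and then to glue the locally recovered pressure functions into a single element of $L^2_{loc}(\Omega(R_0))$. Fix a strictly increasing sequence $\{R_m\}_{m=1}^{\infty}\subset(R_0,\infty)$ with $R_m\to\infty$, so that $\Omega(R_0)=\bigcup_m\Omega(R_0,R_m)$ and each $\Omega(R_0,R_m)$ is a connected bounded region whose image under the Poincar\'e chart $Y$ is the smooth Euclidean annulus $A(R_0,R_m)=\{y\in D_0(1):\tanh(\tfrac a2 R_0)<|y|<\tanh(\tfrac a2 R_m)\}$.

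First I would transfer the functional to the Euclidean picture: for each $m$ define $\textbf{L}_m^{\sharp}$ on $H^1_0(A(R_0,R_m))$ by $\langle\textbf{L}_m^{\sharp},\varphi^{\sharp}\rangle=\langle\textbf{L},\varphi\rangle$, where $\varphi$ and $\varphi^{\sharp}$ are related by the pull-back \eqref{pullbackofVarphi}. Using the one-sided norm comparison \eqref{easy_inc2} (equivalently \eqref{verytrivial}), $\textbf{L}_m^{\sharp}$ is bounded with $\|\textbf{L}_m^{\sharp}\|_{H^{-1}(A(R_0,R_m))}\le C_a\|\textbf{L}\|_{H^{-1}(\Omega(R_0))}$. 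Since by \eqref{hypdiv} one has $\dd^*\varphi=0$ on $\Omega(R_0,R_m)$ if and only if $\Div_{\mathbb{R}^2}\varphi^{\sharp}=0$ on $A(R_0,R_m)$, the hypothesis $\textbf{L}|_{\textbf{V}(\Omega(R_0))}=0$ forces $\textbf{L}_m^{\sharp}$ to vanish on every divergence-free test field on $A(R_0,R_m)$. Then Lemma \ref{pressureLemmaEuc}, applied on the smooth bounded domain $A(R_0,R_m)\subset\mathbb{R}^2$, yields $P_m^{\sharp}\in L^2(A(R_0,R_m))$ with $\langle\textbf{L}_m^{\sharp},\varphi^{\sharp}\rangle=-\int_{A(R_0,R_m)}P_m^{\sharp}\,\Div_{\mathbb{R}^2}\varphi^{\sharp}\,\Vol_{\mathbb{R}^2}$; setting $P_m=P_m^{\sharp}\circ Y\in L^2(\Omega(R_0,R_m))$ and invoking \eqref{hypdiv} again rewrites this as $\langle\textbf{L},\varphi\rangle=\int_{\Omega(R_0,R_m)}P_m\,\dd^*\varphi\,\Vol_{\mathbb{H}^2(-a^2)}$ for all $\varphi\in\Lambda_c^1(\Omega(R_0,R_m))$.

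Next I would perform the gluing. On the overlap $\Omega(R_0,R_1)\subset\Omega(R_0,R_m)$ the two representations of $\textbf{L}$ coincide, so $\dd(P_m-P_1)=0$ there; as $\Omega(R_0,R_1)$ is connected, $P_m-P_1$ equals a constant $C_{1,m}$, and I replace $P_m$ by $\widetilde{P}_m=P_m-C_{1,m}$, so that $\widetilde{P}_m|_{\Omega(R_0,R_1)}=P_1$ for every $m$. To see these normalized pressures are mutually compatible, take $1<m<n$: then $\dd(\widetilde{P}_n-\widetilde{P}_m)=0$ on the connected set $\Omega(R_0,R_m)$, hence $\widetilde{P}_n-\widetilde{P}_m$ is a constant, and restricting to $\Omega(R_0,R_1)$ and using $\widetilde{P}_n|_{\Omega(R_0,R_1)}=P_1=\widetilde{P}_m|_{\Omega(R_0,R_1)}$ shows the constant is $0$. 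Therefore the rule $P|_{\Omega(R_0,R_m)}=\widetilde{P}_m$ well-defines $P\in L^2_{loc}(\Omega(R_0))$, and for any $\varphi\in\Lambda_c^1(\Omega(R_0))$ — which is supported in some $\Omega(R_0,R_m)$ — we obtain $\langle\textbf{L},\varphi\rangle=\int_{\Omega(R_0)}P\,\dd^*\varphi\,\Vol_{\mathbb{H}^2(-a^2)}$, i.e. $\textbf{L}=\dd P$ in the sense of distributions.

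The main obstacle is precisely the gluing step: the Euclidean lemma determines each $P_m$ only up to an additive constant, so one must normalize the constants consistently across the exhaustion and check that the normalized pressures genuinely agree on overlaps; connectedness of each annular region $\Omega(R_0,R_m)$ — so that a distribution with vanishing differential is a constant — is what makes this bookkeeping close. A secondary technical point is the passage between the hyperbolic and Euclidean settings, which requires knowing that pull-back by $Y$ preserves the divergence-free condition and gives a controlled comparison of the relevant norms; both are furnished by \eqref{hypdiv} and \eqref{easy_inc2}.
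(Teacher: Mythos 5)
Your proposal is correct and follows essentially the same route as the paper: transfer $\textbf{L}$ to the Euclidean annuli $A(R_0,R_m)$ via the Poincar\'e chart, apply Lemma \ref{pressureLemmaEuc} there using \eqref{verytrivial} and the equivalence of the divergence-free conditions from \eqref{hypdiv}, pull the pressures back, and normalize the additive constants on the overlap $\Omega(R_0,R_1)$ so the $\widetilde{P}_m$ glue into a single $P\in L^2_{loc}(\Omega(R_0))$. The paper's appendix carries out exactly this exhaustion-and-gluing argument, including the same consistency check $C_{m,n}=0$ for $1<m<n$.
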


\bibliography{ref}
\bibliographystyle{plain}

\end{document}